\documentclass[english,12pt,oneside]{amsproc}
\usepackage[T2A]{fontenc}
\usepackage[english]{babel}
\usepackage{sseq}
\usepackage{graphics}
\usepackage{amsfonts, amssymb, amscd, amsmath}
\usepackage{latexsym}
\usepackage[matrix,arrow,curve]{xy}
\usepackage{mathabx,mathtools}
\usepackage{color}
\usepackage{mathrsfs}
\usepackage{mathdots}
\usepackage{pigpen}
\usepackage{tikz}
\usetikzlibrary{matrix}

\oddsidemargin=1cm
\textwidth=15cm \allowdisplaybreaks[1]

\DeclareMathOperator{\lk}{lk} \DeclareMathOperator{\cone}{Cone}
 \DeclareMathOperator{\Ker}{Ker}
\DeclareMathOperator{\coker}{Coker} \DeclareMathOperator{\id}{id}
\DeclareMathOperator{\codim}{codim}

\DeclareMathOperator{\sgn}{sgn}
 \DeclareMathOperator{\im}{Im}
 
\DeclareMathOperator{\ver}{Vert} \DeclareMathOperator{\Tot}{Tot}
\DeclareMathOperator{\tr}{tr}

\newcommand{\Zo}{\mathbb{Z}}
\newcommand{\Ro}{\mathbb{R}}
\newcommand{\Co}{\mathbb{C}}
\newcommand{\Qo}{\mathbb{Q}}

\newcommand{\ko}{\Bbbk}
\newcommand{\Zt}{\Zo_2}

\newcommand{\eqd}{\stackrel{\text{\tiny def}}{=}}

\newcommand{\lco}{^\lfloor}
\newcommand{\rco}{^\rfloor}

\newcommand{\ca}[1]{\mathcal{#1}}
\newcommand{\cah}[1]{\widehat{\mathcal{#1}}}
\newcommand{\ups}[1]{{\lco #1\rco}}
\newcommand{\sta}[1]{(\ast_{#1})}
\newcommand{\inc}[2]{[#1:#2]}

\newcommand{\ang}{\raisebox{-2pt}{\pigpenfont G}}

\newcommand{\ld}{\texttt{N}}

\newcommand{\A}{\ca{A}}
\newcommand{\R}{\ca{R}}
\newcommand{\Cc}{C} 
\newcommand{\Tt}{\ca{T}}
\newcommand{\I}{\ca{I}}

\newcommand{\La}{\ca{L}}
\newcommand{\loc}{\ca{U}}
\newcommand{\Lah}{\widehat{\ca{L}}}
\newcommand{\Pih}{\widehat{\Pi}}

\newcommand{\Iseq}{{_{seq}\!\I}}
\newcommand{\Rseq}{{_{seq}\!R}}
\newcommand{\Cseq}{{_{seq}\!C}}

\newcommand{\Hr}{\widetilde{H}}
\newcommand{\dd}{\partial}
\newcommand{\hh}{\mathcal{H}}
\newcommand{\hht}{\underline{\mathcal{H}}}

\newcommand{\oo}{^{\circ}}

\newcommand{\Ey}{{{^Y}\!E}}
\newcommand{\Eyt}{{{^{\dd Y}}\!E}}
\newcommand{\Ex}{{{^X}\!E}}
\newcommand{\Ext}{{{^{\dd X}}\!E}}
\newcommand{\Ept}{{{^{\dd P}}\!E}}
\newcommand{\Ep}{{{^{P}}\!E}}
\newcommand{\Eh}{{{^H}\!E}}
\newcommand{\Ev}{{{^V}\!E}}
\newcommand{\Eq}{{{^Q}\!E}}
\newcommand{\Eqt}{{{^{\dd Q}}\!E}}

\newcommand{\Eyj}{{{^Y_j}\!E}}
\newcommand{\Exj}{{{^X_j}\!E}}
\newcommand{\Ccq}{{{_q}\Cc}}

\newcommand{\dy}{d_Y}
\newcommand{\dx}{d_X}
\newcommand{\dho}{d^H}
\newcommand{\dv}{d^V}
\newcommand{\dqq}{d_Q}

\newcommand{\simc}{\!\!\sim}

\newcommand{\ft}{\widehat{f}}
\newcommand{\chir}{\widetilde{\chi}}
\newcommand{\br}{\widetilde{b}}

\newcommand{\ST}[1]{\mbox{\upshape\small #1}}
\newcommand{\cat}{\ST{CAT}}
\newcommand{\catop}{\ST{CAT}^{op}}

\newcommand{\MOD}{\ST{MOD}}

\newcommand{\ddb}{b_1}

\newcounter{stmcounter}[section]
\newcounter{thcounter}
\newcounter{problcounter}

\numberwithin{equation}{section}

\theoremstyle{plain}
\newtheorem{cor}[stmcounter]{Corollary}
\newtheorem{stm}[stmcounter]{Statement}
\newtheorem{thm}[thcounter]{Theorem}
\newtheorem{prop}[stmcounter]{Proposition}
\newtheorem{lemma}[stmcounter]{Lemma}
\newtheorem{defin}[stmcounter]{Definition}
\newtheorem{problem}[problcounter]{Problem}

\newtheorem{claim}[stmcounter]{Claim}

\theoremstyle{definition}

\newtheorem{ex}[stmcounter]{Example}
\newtheorem{rem}[stmcounter]{Remark}
\newtheorem{con}[stmcounter]{Construction}

\begin{document}

\title[Homology of torus spaces]{Homology of torus spaces with acyclic proper faces of the orbit space}

\author{Anton Ayzenberg}
\thanks{The author is supported by the JSPS postdoctoral fellowship program}
\address{Osaka City University}
\email{ayzenberga@gmail.com}

\begin{abstract}
Let $X$ be $2n$-dimensional compact manifold and
$T^n\curvearrowright X$ be a locally standard action of a compact
torus. The orbit space $X/T$ is a manifold with corners. Suppose
that all proper faces of $X/T$ are acyclic. In the paper we study
the homological spectral sequence $\Ex^*_{*,*}\Rightarrow H_*(X)$
corresponding to the filtration of $X$ by orbit types. When the
free part of the action is not twisted, we describe the whole
spectral sequence $\Ex^*_{*,*}$ in terms of homology and
combinatorial structure of the orbit space $X/T$. In this case we
describe the kernel and the cokernel of the natural map
$\ko[X/T]/(l.s.o.p.)\to H^*(X)$, where $\ko[X/T]$ is a face ring
of $X/T$ and $(l.s.o.p.)$ is the ideal generated by a linear
system of parameters (this ideal appears as the image of
$H^{>0}(BT)$ in $H^*_T(X)$). There exists a natural double grading
on $H_*(X)$, which satisfies bigraded Poincare duality. This
general theory is applied to compute homology groups of origami
toric manifolds with acyclic proper faces of the orbit space. A
number of natural generalizations is considered. These include
Buchsbaum simplicial complexes and posets. $h'$- and $h''$-numbers
of simplicial posets appear as the ranks of certain terms in the
spectral sequence $\Ex^*_{*,*}$. In particular, using topological
argument we show that Buchsbaum posets have nonnegative
$h''$-vectors. The proofs of this paper rely on the theory of
cellular sheaves. We associate to a torus space certain sheaves
and cosheaves on the underlying simplicial poset, and observe an
interesting duality between these objects. This duality seems to
be a version of Poincare--Verdier duality between cellular sheaves
and cosheaves.
\end{abstract}

%

\maketitle

\tableofcontents

\section{Introduction}\label{SecIntro}

Let $M$ be $2n$-dimensional compact manifold with a locally
standard action of a compact torus $T^n$. This means, by
definition, that the action of $T^n$ on $M^{2n}$ is locally
modeled by a standard coordinate-wise action of $T^n$ on $\Co^n$.
Since $\Co^n/T^n$ can be identified with a nonnegative cone
$\Ro_{\geqslant}^n$, the quotient space $Q=M/T$ has a natural
structure of a compact manifold with corners. The general problem
is the following:

\begin{problem}\label{problemCohom}
Describe the (co)homology of $M$ in terms of combinatorics and
topology of the orbit space $Q$ and the local data of the action.
\end{problem}

The answer is known in the case when $Q$ and all its faces are
acyclic (so called homology polytope) \cite{MasPan}. In this case
the equivariant cohomology ring of $M$ coincides with the face
ring of simplicial poset $S_Q$ dual to $Q$, and the ordinary
cohomology has description similar to that of toric varieties or
quasitoric manifolds:
\[
H^*(M;\ko)\cong \ko[S_Q]/(\theta_1,\ldots,\theta_n),\qquad
\deg\theta_i=2.
\]
In this case $\ko[S_Q]$ is Cohen--Macaulay and
$\theta_1,\ldots,\theta_n$ is a linear regular sequence determined
by the characteristic map on $Q$. In particular, cohomology
vanishes in odd degree, and $\dim H^{2i}(M)=h_i(S_Q)$.

In general, there is a topological model of a manifold $M$, called
canonical model. The construction is the following. Start with a
nice manifold with corners $Q$, consider a principal $T^n$-bundle
$Y$ over $Q$, and then consider the quotient space $X=Y/\simc$
determined by a characteristic map \cite[def. 4.2]{Yo}. It is
known that $X$ is a manifold with locally standard torus action,
and every manifold with l.s.t.a. is equivariantly homeomorphic to
such canonical model. Thus it is sufficient to work with canonical
models to answer Problem \ref{problemCohom}.

In this paper we study the case when all proper faces of $Q$ are
acyclic, but $Q$ itself may be arbitrary. Homology of $X$ can be
described by the spectral sequence $\Ex^r_{*,*}$ associated to the
filtration of $X$ by orbit types:
\begin{equation}\label{eqIntroFiltX}
X_0\subset X_1\subset\ldots\subset X_{n-1}\subset X_n=X,\quad \dim
X_i=2i.
\end{equation}
This filtration is covered by the filtration of $Y$:
\begin{equation}\label{eqIntroFiltY}
Y_0\subset Y_1\subset\ldots\subset Y_{n-1}\subset Y_n=Y,\quad
X_i=Y_i/\simc.
\end{equation}
We prove that most entries of the second page $\Ex^2_{*,*}$
coincide with corresponding entries of $\Ey^2_{*,*}$ (Theorem
\ref{thmTwoSpecSeqGeneral}). When $Y$ is a trivial $T^n$-bundle,
$Y=Q\times T^n$, this observation allows to describe $\Ex^*_{*,*}$
completely in terms of topology and combinatorics of $Q$ (Theorem
\ref{thmEX2structure}, statement \ref{stmThmSpecConvint} and
Theorem \ref{thmBorderStruct}). This answers Problem
\ref{problemCohom} additively. From this description in particular
follows that Betti numbers of $X$ do not depend on the choice of
characteristic map. We hope, that this technic will lead to the
description of cohomology multiplication in $H^*(X)$ as well.

Another motivation for this paper comes from a theory of Buchsbaum
simplicial complexes and posets. The notions of $h'$- and
$h''$-vectors of simplicial poset $S$ first appeared in
combinatorial commutative algebra \cite{Sch,NS}. These invariants
emerge naturally in the description of homology of $X$ (Theorems
\ref{thmBorderStruct} and \ref{thmBorderManif}). The space
$X=Y/\simc$ can be constructed not only in the case when $Q$ is a
manifold with corners (``manifold case''), but also in the case
when $Q$ is a cone over geometric realization of simplicial poset
$S$ (``cone case''). In the cone case, surely, $X$ may not be a
manifold. But there still exists filtration \eqref{eqIntroFiltX},
and homology groups of $X$ can be calculated by the same method as
for manifolds, when $S$ is Buchsbaum. In the cone case we prove
that $\dim \Ex^{\infty}_{i,i}=h''(S)$ (Theorem
\ref{thmHtwoPrimes}). Thus, in particular, $h''$-vector of
Buchsbaum simplicial poset is nonnegative. This result is proved
in commutative algebra by completely different methods \cite{NS}.

The exposition of the paper is built in such way that both
situations: manifolds with acyclic faces, and cones over Buchsbaum
posets are treated in a common context. In order to do this we
introduce the notion of Buchsbaum pseudo-cell complex which is
very natural and includes both motivating examples. A theory of
cellular sheaves over simplicial posets is used to prove basic
theorems. The coincidence of most parts of $\Ex^r_{*,*}$ and
$\Ey^r_{*,*}$ follows from the Key lemma (lemma
\ref{lemmaKeyLemma}) which is an instance of general duality
between certain sheaves and cosheaves (Theorem \ref{thmDuality}).
In the manifold case this duality can be deduced from Verdier
duality for cellular sheaves, described in \cite{Curry}.

The paper is organized as follows. Section \ref{SecPrelim}
contains preliminaries on simplicial posets and cellular sheaves.
In section \ref{secHomologyOfPC} we introduce the notion of simple
pseudo-cell complex and describe spectral sequences associated to
filtrations by pseudo-cell skeleta. Section \ref{secTorusPrelim}
is devoted to torus spaces over pseudo-cell complexes. The main
results (Theorems \ref{thmTwoSpecSeqGeneral}--\ref{thmHtwoPrimes})
are stated in section \ref{secMainResults}. The rest of section
\ref{secMainResults} contains the description of homology of $X$.
There is an additional grading on homology groups, and in the
manifold case there is a bigraded Poincare duality. Section
\ref{SecKeyLemma} contains a sheaf-theoretic discussion of the
subject. In this section we prove Theorem \ref{thmDuality} which
can be considered as a version of cellular Verdier duality. This
proves the Key lemma, from which follow Theorems
\ref{thmTwoSpecSeqGeneral} and \ref{thmEX2structure}. Section
\ref{secHvectors} is devoted to the combinatorics of simplicial
posets. In this section we recall combinatorial definitions of
$f$-, $h$-, $h'$- and $h''$-vectors and prove Theorems
\ref{thmBorderStruct}--\ref{thmHtwoPrimes}. The structure of
equivariant cycles and cocycles of a manifold $X$ with locally
standard torus action is the subject of section
\ref{secEquivarGeometry}. There exists a natural map
$\ko[S]/(\theta_1,\ldots,\theta_n)\to H^*(X)$, where
$(\theta_1,\ldots,\theta_n)$ is a linear system of parameters,
associated to a characteristic map. In general (i.e. when $Q$ is
not a homology polytope) this map may be neither injective nor
surjective. The kernel of this map is described by corollary
\ref{corKernel}. The calculations for some particular examples are
gathered in section \ref{secExamples}. The main family of
nontrivial examples is the family of origami toric manifolds with
acyclic proper faces of the orbit space.

\section{Preliminary constructions}\label{SecPrelim}

\subsection{Preliminaries on simplicial posets}
First, recall several standard definitions. A partially ordered
set (poset in the following) is called simplicial if it has a
minimal element $\varnothing\in S$, and for any $I\in S$, the
lower order ideal $S_{\leqslant I}=\{J\mid J\leqslant I\}$ is
isomorphic to the boolean lattice $2^{[k]}$ (the poset of faces of
a simplex). The number $k$ is called the rank of $I\in S$ and is
denoted $|I|$. Also set $\dim I = |I|-1$. A vertex is a simplex of
rank $1$ (i.e. the atom of the poset); the set of all vertices is
denoted by $\ver(S)$. A subset $L\subset S$, for which $I<J$,
$J\in L$ implies $I\in L$ is called a simplicial subposet.

The notation $I<_iJ$ is used whenever $I<J$ and $|J|-|I|=i$. If
$S$ is a simplicial poset, then for each $I<_2J\in S_Q$ there
exist exactly two intermediate simplices $J',J''$:
\begin{equation}\label{eqSquareCond}
I<_1 J',J'' <_1 J.
\end{equation}
For simplicial poset $S$ a ``sign convention'' can be chosen. It
means that we can associate an incidence number $\inc{J}{I}=\pm 1$
to any $I<_1J\in S$ in such way that for \eqref{eqSquareCond}
holds
\begin{equation}\label{eqSignSquare}
\inc{J}{J'}\cdot \inc{J'}{I}+\inc{J}{J''}\cdot \inc{J''}{I}=0.
\end{equation}
The choice of a sign convention is equivalent to the choice of
orientations of all simplices.

For $I\in S$ consider the link:
\[
\lk_SI=\{J\in S\mid J\geqslant I\}.
\]
It is a simplicial poset with minimal element $I$. On the other
hand, $\lk_SI$ can also be considered as a subset of $S$. It can
be seen that $S\setminus\lk_SI$ is a simplicial subposet. Note,
that $\lk_S\varnothing=S$.

Let $S'$ be the barycentric subdivision of $S$. By definition,
$S'$ is a simplicial complex on the set $S\setminus \varnothing$
whose simplices are the chains of elements of $S$. By definition,
the geometric realization of $S$ is the geometric realization of
its barycentric subdivision $|S|\eqd|S'|$. One can also think
about $|S|$ as a CW-complex with simplicial cells \cite{Bj}. A
poset $S$ is called pure if all its maximal elements have equal
dimensions. A poset $S$ is pure whenever $S'$ is pure.

\begin{defin}
Simplicial complex $K$ of dimension $n-1$ is called Buchsbaum if
$\Hr_i(\lk_KI)=0$ for all $\varnothing\neq I\in K$ and $i\neq
n-1-|I|$. If $K$ is Buchsbaum and, moreover, $\Hr_i(K)=0$ for
$i\neq n-1$ then $K$ is called Cohen--Macaulay. Simplicial poset
$S$ is called Buchsbaum (Cohen--Macaulay) if $S'$ is a Buchsbaum
(resp. Cohen--Macaulay) simplicial complex.
\end{defin}

\begin{rem}
Whenever the coefficient ring in the notation of (co)homology is
omitted it is supposed to be the ground ring $\ko$, which is
either a field or the ring of integers.
\end{rem}

\begin{rem}\label{remBuchPoset}
By \cite[Sec.6]{NS}, $S$ is Buchsbaum whenever $\Hr_i(\lk_SI)=0$
for all $\varnothing\neq I\in S$ and $i\neq n-1-|I|$. Similarly,
$S$ is Cohen--Macaulay if $\Hr_i(\lk_SI)=0$ for all $I\in S$ and
$i\neq n-1-|I|$. A poset $S$ is Buchsbaum whenever all its proper
links are Cohen--Macaulay.
\end{rem}

One easily checks that Buchsbaum property implies purity.

\subsection{Cellular sheaves}
Let $\MOD_{\ko}$ be the category of $\ko$-modules. The notation
$\dim V$ is used for the rank of a $\ko$-module $V$.

Each simplicial poset $S$ defines a small category $\cat(S)$ whose
objects are the elements of $S$ and morphisms --- the inequalities
$I\leqslant J$. A cellular sheaf \cite{Curry} (or a stack
\cite{McCrory}, or a local coefficient system elsewhere) is a
covariant functor $\A\colon \cat(S)\to \MOD_{\ko}$. We simply call
$\A$ a sheaf on $S$ and hope that this will not lead to a
confusion, since different meanings of this word do not appear in
the paper. The maps $\A(J_1\leqslant J_2)$ are called the
restriction maps. The cochain complex $(\Cc^*(S;\A),d)$ is defined
as follows:
\[
\Cc^*(S;\A) = \bigoplus_{i\geqslant -1}\Cc^i(S;\A),\qquad
\Cc^i(S;\A) = \bigoplus_{\dim I=i}\A(I),
\]
\[
d\colon\Cc^i(S;\A)\to \Cc^{i+1}(S;\A), \qquad d=\bigoplus_{I<_1I',
\dim I=i}[I':I]\A(I\leqslant I').
\]
By the standard argument involving sign convention
\eqref{eqSignSquare}, $d^2=0$, thus $(\Cc^*(K;\A),d)$ is a
differential complex. Define the cohomology of $\A$ as the
cohomology of this complex:
\begin{equation}\label{eqSheafCohomDef}
H^*(S;\A)\eqd H^*(\Cc^*(S;\A),d).
\end{equation}

\begin{rem}
Cohomology of $\A$ defined this way coincide with any other
meaningful definition of cohomology. E.g. the derived functors of
the functor of global sections are isomorphic to
\eqref{eqSheafCohomDef} (see \cite{Curry} for the vast exposition
of this subject).
\end{rem}

A sheaf $\A$ on $S$ can be restricted to a simplicial subposet
$L\subset S$. The complexes $(\Cc^*(L,\A),d)$ and
$(\Cc^*(S;\A)/\Cc^*(L;\A),d)$ are defined in a usual manner. The
latter complex gives rise to a relative version of sheaf
cohomology: $H^*(S,L;\A)$.

\begin{rem}
It is standard in topological literature to consider cellular
sheaves which do not take values on $\varnothing\in S$, since in
general this element has no geometrical meaning. However, this
extra value $\A(\varnothing)$ is very important in the
considerations of this paper. Thus the cohomology group may be
nontrivial in degree $\dim\varnothing=-1$. If a sheaf $\A$ is
defined on $S$, then we often consider its truncated version
$\underline{\A}$ which coincides with $\A$ on
$S\setminus\{\varnothing\}$ and vanishes on $\varnothing$.
\end{rem}

\begin{ex}\label{exSheafConstant}
Let $W$ be a $\ko$-module. By abuse of notation let $W$ denote the
(globally) constant sheaf on $S$. It takes constant value $W$ on
$\varnothing\neq I\in S$ and vanishes on $\varnothing$; all
nontrivial restriction maps are identity isomorphisms. In this
case $H^*(S;W)\cong H^*(S)\otimes W$.
\end{ex}

\begin{ex}
A locally constant sheaf valued by $W\in\MOD_\ko$ is a sheaf
$\ca{W}$ which satisfies $\ca{W}(\varnothing)=0$, $\ca{W}(I)\cong
W$ for $I\neq\varnothing$ and all nontrivial restriction maps are
isomorphisms.
\end{ex}

\begin{ex}\label{exSheafLocal}
Let $I\in S$ and $W\in \MOD_\ko$. Consider the sheaf $\ups{I}^W$
defined by
\begin{equation}\label{eqSheafLocal}
\ups{I}^W(J)=\begin{cases} W,\mbox{ if } J\geqslant I\\0, \mbox{
otherwise},
\end{cases}
\end{equation}
with the restriction maps $\ups{I}^W(J_1\leqslant J_2)$ either
identity on $W$ (when $I\leqslant J_1$), or $0$ (otherwise). Then
$\ups{I}^W=\ups{I}^{\ko}\otimes W$ and $H^*(S;\ups{I}^W)\cong
H^*(S;\ups{I}^{\ko})\otimes W$. We have
\[
H^*(S;\ups{I}^{\ko})\cong H^{*-|I|}(\lk_SI),
\]
since corresponding differential complexes coincide.
\end{ex}

In the following if $\ca{A}$ and $\ca{B}$ are two sheaves on $S$
we denote by $\ca{A}\otimes\ca{B}$ their componentwise tensor
product: $(\ca{A}\otimes\ca{B})(I)=\ca{A}(I)\otimes\ca{B}(I)$ with
restriction maps defined in the obvious way.

\begin{ex}\label{exSheafLocTens}
As a generalization of the previous example consider the sheaf
$\ups{I}^{\ko}\otimes \ca{A}$. Then
\[
H^*(S;\ups{I}^{\ko}\otimes\ca{A})\cong
H^{*-|I|}(\lk_SI;\ca{A}|_{\lk_SI}).
\]
\end{ex}

\begin{ex}\label{exSheafLocHomol}
Following \cite{McCrory}, define $i$-th local homology sheaf
$\loc_i$ on $S$ by setting $\loc_i(\varnothing)=0$ and
\begin{equation}\label{eqDefLocHomSheaf}
\loc_i(J)=H_i(S,S\setminus\lk_SJ)
\end{equation}
for $J\neq \varnothing$. The restriction maps $\loc_i(J_1<J_2)$
are induced by inclusions $\lk_SJ_2\hookrightarrow\lk_SJ_1$. A
poset $S$ is Buchsbaum if and only if $\loc_i=0$ for $i<n-1$.
\end{ex}

\begin{defin}
Buchsbaum poset $S$ is called homology manifold (orientable over
$\ko$) if its local homology sheaf $\loc_{n-1}$ is isomorphic to
the constant sheaf $\ko$.
\end{defin}

If $|S|$ is a compact closed orientable topological manifold then
$S$ is a homology manifold.

\subsection{Cosheaves}

A cellular cosheaf (see \cite{Curry}) is a contravariant functor
$\cah{A}\colon \catop(S)\to \MOD_{\ko}$. The homology of a cosheaf
is defined similar to cohomology of sheaves:
\begin{gather*}
\Cc_*(S;\cah{A}) = \bigoplus_{i\geqslant -1}\Cc_i(S;\cah{A})\quad
\Cc_i(S;\cah{A}) = \bigoplus_{\dim I=i}\A(I)\\
d\colon\Cc_i(S;\cah{A})\to \Cc_{i-1}(S;\cah{A}),\quad
d=\bigoplus_{I>_1I', \dim I=i}[I:I']\cah{A}(I\geqslant I'),\\
H_*(S;\cah{A})\eqd H_*(\Cc_*(S;\cah{A}),d).
\end{gather*}

\begin{ex}
Each locally constant sheaf $\ca{W}$ on $S$ defines the locally
constant cosheaf $\cah{W}$ by inverting arrows, i.e.
$\cah{W}(I)\cong \ca{W}(I)$ and $\cah{W}(I>J)=(\ca{W}(J<I))^{-1}$.
\end{ex}

\section{Buchsbaum pseudo-cell
complexes}\label{secHomologyOfPC}

\subsection{Simple pseudo-cell complexes}
\begin{defin}[Pseudo-cell complex]
A CW-pair $(F,\dd F)$ will be called $k$-dimensional pseudo-cell,
if $F$ is compact and connected, $\dim F = k$, $\dim \dd
F\leqslant k-1$. A (regular finite) pseudo-cell complex $Q$ is a
space which is a union of an expanding sequence of subspaces $Q_k$
such that $Q_{-1}$ is empty and $Q_{k}$ is the pushout obtained
from $Q_{k-1}$ by attaching finite number of $k$-dimensional
pseudo-cells $(F,\dd F)$ along injective attaching maps $\dd F\to
Q_{k-1}$. The images of $(F,\dd F)$ in $Q$ will be also called
pseudo-cells and denoted by the same letters.
\end{defin}

\begin{rem}
In general, situations when $\dd F=\varnothing$ or
$Q_0=\varnothing$ are allowed by this definition. Thus the
construction of pseudo-cell complex may actually start not from
$Q_0$ but from higher dimensions.
\end{rem}

Let $F\oo = F\setminus \dd F$ denote open cells. In the following
we assume that the boundary of each cell is a union of lower
dimensional cells. Thus all pseudo-cells of $Q$ are partially
ordered by inclusion. We denote by $S_Q$ the poset of faces with
the reversed order, i.e. $F<_{S_Q} G$ iff $G\subseteq \dd F\subset
F$. To distinguish abstract elements of poset $S_Q$ from faces of
$Q$ the former are denoted by $I, J,\ldots\in S_Q$, and
corresponding faces --- $F_I,F_J,\ldots\subseteq Q$.

\begin{defin}\label{definSimplePseudocellCpx}
A pseudo-cell complex $Q$, $\dim Q=n$ is called simple if $S_Q$ is
a simplicial poset of dimension $n-1$ and $\dim F_I=n-1-\dim I$
for all $I\in S_Q$.
\end{defin}

Thus for every face $F$, the upper interval $\{G\mid G\supseteq
F\}$ is isomorphic to a boolean lattice $2^{[\codim F]}$. In
particular, there exists a unique maximal pseudo-cell
$F_{\varnothing}$ of dimension $n$, i.e. $Q$ itself. In case of
simple pseudo-cell complexes we adopt the following naming
convention: pseudo-cells different from $F_{\varnothing}=Q$ are
called faces, and faces of codimension $1$
--- facets. Facets correspond to vertices (atoms) of $S_Q$. Each
face $F$ is contained in exactly $\codim F$ facets. In this paper
only simple pseudo-cell complexes are considered.

\begin{ex}\label{exSimplePseudoManif} Nice (compact connected) manifolds
with corners as defined in \cite{MasPan} are examples of simple
pseudo-cell complexes. Each face $F$ is itself a manifold and $\dd
F$ is the boundary in a common sense.
\end{ex}

\begin{ex}\label{exSimplePseudoPoset} Each pure simplicial poset $S$ determines a simple
pseudo-cell complex $P(S)$ such that $S_{P(S)}= S$ by the
following standard construction. Consider the barycentric
subdivision $S'$ and construct the cone $P(S)=|\cone S'|$. By
definition, $\cone S'$ is a simplicial complex on the set $S$ and
$k$-simplices of $\cone S'$ have the form $(I_0< I_1<\ldots<I_k)$,
where $I_i\in S$. For each $I\in S$ consider the pseudo-cell:
\[
F_I=\left|\{(I_0< I_1<\ldots)\in \cone S'\mbox{ such that }
I_0\geqslant I\}\right|\subset |\cone S'|
\]
\[
\dd F_I=\left|\{(I_0< I_1<\ldots)\in \cone S'\mbox{ such that }
I_0>I\}\right|\subset |\cone S'|
\]
Since $S$ is pure, $\dim F_I=n-\dim I-1$. These sets define a
pseudo-cell structure on $P(S)$. One shows that $F_I\subset F_J$
whenever $J<I$. Thus $S_{P(S)}=S$. Face $F_I$ is called dual to
$I\in S$. The filtration by pseudo-cell skeleta
\begin{equation}\label{eqCoskeleton}
\varnothing=Q_{-1}\subset Q_0\subset Q_1\subset\ldots\subset
Q_{n-1}=\dd Q = |S|,
\end{equation}
is called the coskeleton filtration of $|S|$ (see \cite{McCrory}).

The maximal pseudo-cell $F_{\varnothing}$ of $P(S)$ is
$P(S)\cong\cone|S|$, and $\dd F_{\varnothing}=|S|$. Note that $\dd
F_I$ can be identified with the barycentric subdivision of
$\lk_SI$. Face $F_I$ is the cone over $\dd F_I$.

If $S$ is non-pure, this construction makes sense as well, but the
dimension of $F_I$ may not be equal to $n-\dim I-1$. So $P(S)$ is
not a simple pseudo-cell complex if $S$ is not pure.
\end{ex}

For a general pseudo-cell complex $Q$ there is a skeleton
filtration
\begin{equation}\label{eqFiltrationQ}
Q_0\subset Q_1\subset \ldots\subset Q_{n-1}=\dd Q\subset Q_n=Q
\end{equation}
and the corresponding spectral sequences in homology and
cohomology are:
\begin{gather}
\Eq^1_{p,q}=H_{p+q}(Q_p,Q_{p-1}) \Rightarrow H_{p+q}(Q),\qquad
\dqq^r\colon \Eq^r_{*,*}\to\Eq^r_{*-r,*+r-1}\\
\Eq_1^{p,q}=H^{p+q}(Q_p,Q_{p-1}) \Rightarrow H^{p+q}(Q)\qquad
(\dqq)_r\colon \Eq_r^{*,*}\to\Eq_r^{*+r,*-r+1}.
\end{gather}
In the following only homological case is considered; the
cohomological case being completely parallel.

Similar to ordinary cell complexes the first term of the spectral
sequence is described as a sum:
\[
H_{p+q}(Q_p,Q_{p-1})\cong \bigoplus_{\dim F=p}H_{p+q}(F,\dd F).
\]
The differential $\dqq^1$ is the sum over all pairs $I<_1J\in S$
of the maps:
\begin{multline}\label{eqMattachingMap}
m^q_{I,J}\colon H_{q+\dim F_I}(F_I,\dd F_I)\to H_{q+\dim
F_I-1}(\dd F_I)\to \\ \to H_{q+\dim F_I-1}(\dd F_I, \dd
F_I\setminus F_J\oo)\cong H_{q+\dim F_J}(F_J, \dd F_J),
\end{multline}
where the last isomorphism is due to excision. Also consider the
truncated spectral sequence
\[
\Eqt^1_{p,q}=H_{p+q}(Q_p,Q_{p-1}), p<n\Rightarrow H_{p+q}(\dd Q).
\]

\begin{con}\label{conSheafOnQ}
Given a sign convention on $S_Q$, for each $q$ consider the sheaf
$\hh_q$ on $S_Q$ given by
\[
\hh_q(I)=H_{q+\dim F_I}(F_I,\dd F_I)
\]
with restriction maps $\hh_q(I<_1J) = \inc{J}{I} m^q_{I,J}$. For
general $I<_kJ$ consider any saturated chain
\begin{equation}\label{eqSaturChain}
I<_1J_1<_1\ldots<_1J_{k-1}<_1J
\end{equation}
and set $\hh_q(I<_kJ)$ to be equal to the composition
\[
\hh_q(J_{k-1}<_1J)\circ\ldots\circ\hh_q(I<_1J_1).
\]

\begin{lemma}
The map $\hh_q(I<_kJ)$ does not depend on a saturated chain
\eqref{eqSaturChain}.
\end{lemma}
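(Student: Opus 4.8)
The plan is to reduce the well-definedness of $\hh_q(I<_kJ)$ to the case $k=2$, and then to exploit the defining relation \eqref{eqSignSquare} of the sign convention together with the geometry of the pseudo-cell complex. First I would observe that any two saturated chains \eqref{eqSaturChain} from $I$ to $J$ can be connected by a sequence of elementary moves, each of which replaces a fragment $I'<_1 J'<_1 J''$ by $I'<_1 \tilde{J}'<_1 J''$ where $J',\tilde{J}'$ are the two intermediate simplices of the interval $[I',J'']\cong 2^{[2]}$ guaranteed by \eqref{eqSquareCond}. This is a standard fact about the boolean lattice $2^{[k]}$ (the order complex of the poset of maximal chains of $2^{[k]}$ is connected), and it holds here because the upper interval above any face of a simple pseudo-cell complex is boolean. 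Hence it suffices to show that for each $I'<_2 J''$ the composite $\hh_q(J'<_1 J'')\circ\hh_q(I'<_1 J')$ is independent of the choice of intermediate simplex among the two available ones.

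Next I would unwind the two composites in terms of the maps $m^q_{I,J}$ of \eqref{eqMattachingMap}. Writing $F=F_{I'}$, $G'=F_{J'}$, $G''=F_{\tilde J'}$ and $E=F_{J''}$, the composite along $J'$ is
\[
\inc{J''}{J'}\inc{J'}{I'}\cdot m^q_{J',J''}\circ m^q_{I',J'},
\]
and along $\tilde J'$ it is
\[
\inc{J''}{\tilde J'}\inc{\tilde J'}{I'}\cdot m^q_{\tilde J',J''}\circ m^q_{I',\tilde J'}.
\]
By \eqref{eqSignSquare} the two sign coefficients are negatives of each other, so the claim becomes: the \emph{geometric} composites $m^q_{J',J''}\circ m^q_{I',J'}$ and $m^q_{\tilde J',J''}\circ m^q_{I',\tilde J'}$ are equal (so that the signs cancel the difference). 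Each such composite is a zig-zag of connecting homomorphisms and excision isomorphisms; I would rewrite it as the image of the fundamental relative class of $(F,\dd F)$ under the map
\[
H_{q+\dim F}(F,\dd F)\to H_{q+\dim F-2}\bigl(\dd F\cap(\text{union of the two facets of }F\text{ meeting }E),\ \cdots\bigr)\cong H_{q+\dim E}(E,\dd E),
\]
i.e. as a single "codimension-two attaching map" obtained by restricting twice, which manifestly does not reference the intermediate face. Concretely, both composites factor through $H_*(\dd F,\dd F\setminus E\oo)$ and the two boundary maps $H_*(F,\dd F)\to H_{*-1}(\dd F)\to H_{*-1}(\dd F, \dd F\setminus G'\oo)$ and the analogous one through $G''$ agree after the second restriction to the neighborhood of $E$, because in a simple pseudo-cell complex $E$ lies in exactly the two facets $G',G''$ of $F$ and the relevant portion of $\dd F$ near $E\oo$ is the same regardless of which facet is traversed first (this is where Definition~\ref{definSimplePseudocellCpx}, giving the boolean interval $[J'',I']$, is used).

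The main obstacle is precisely this last commutativity of the two zig-zags of connecting maps and excisions. I expect to handle it by passing to the long exact sequences of the triples
\[
(F,\dd F,\dd F\setminus F_{J''}\oo),\quad (\dd F,\dd F\setminus G'\oo,\dd F\setminus F_{J''}\oo),\quad (\dd F,\dd F\setminus G''\oo,\dd F\setminus F_{J''}\oo),
\]
and noting that all the excision isomorphisms identify the respective relative groups with $H_{q+\dim E}(E,\dd E)$ compatibly; the two composites then become two expressions for the same iterated boundary map in the filtration $\dd F\setminus F_{J''}\oo\subset \dd F\setminus(G'\cup G'')\oo\cup\cdots$, which commute by naturality of the connecting homomorphism. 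Once the $k=2$ case is settled, the general case follows by induction on $k$: any two saturated chains are linked by elementary moves localized in a single length-two subinterval, and the already-established length-two independence propagates along the composition. This completes the proof.
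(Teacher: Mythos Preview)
Your reduction to the length-two case via elementary flips in the boolean interval $\{T\mid I\leqslant T\leqslant J\}$ is correct and matches the paper. The problem is what happens next.

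You correctly observe that the sign products $\inc{J''}{J'}\inc{J'}{I'}$ and $\inc{J''}{\tilde J'}\inc{\tilde J'}{I'}$ are negatives of each other by \eqref{eqSignSquare}. But from this you conclude that the task is to show the unsigned composites $m^q_{J',J''}\circ m^q_{I',J'}$ and $m^q_{\tilde J',J''}\circ m^q_{I',\tilde J'}$ are \emph{equal}. That is backwards: if the signed composites are to agree and the sign factors differ by $-1$, then the unsigned composites must be \emph{negatives} of each other. Your subsequent geometric argument is aimed at the wrong target, and in fact the ``common codimension-two attaching map'' you describe would, if made precise, exhibit an anticommutativity (coming from iterated connecting homomorphisms), not a commutativity.

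The paper avoids all of this geometry with a one-line observation you overlooked: the maps $m^q_{I,J}$ are by definition the components of the first differential $\dqq^1$ of the spectral sequence $\Eq^*_{*,*}$, and $(\dqq^1)^2=0$ automatically. Restricting $(\dqq^1)^2=0$ to the two-step interval gives
\[
m^q_{J',J''}\circ m^q_{I',J'}+m^q_{\tilde J',J''}\circ m^q_{I',\tilde J'}=0,
\]
which is exactly the needed anticommutativity of the unsigned composites. Combined with \eqref{eqSignSquare} this yields equality of the signed composites $\hh_q(J'<_1J'')\circ\hh_q(I'<_1J')=\hh_q(\tilde J'<_1J'')\circ\hh_q(I'<_1\tilde J')$, and the general case then follows from your flip argument. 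So the fix is simply to replace your triple-sequence excision discussion by the single sentence ``$(\dqq^1)^2=0$''.
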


\begin{proof}
The differential $\dqq^1$ satisfies $(\dqq^1)^2=0$, thus
$m^q_{J',J}\circ m^q_{I,J'}+m^q_{J'',J}\circ m^q_{I,J''}=0$. By
combining this with \eqref{eqSignSquare} we prove that
$\hh_q(I<_2J)$ is independent of a chain. In general, since
$\{T\mid I\leqslant T\leqslant J\}$ is a boolean lattice, any two
saturated chains between $I$ and $J$ are connected by a sequence
of elementary flips $[J_k~<_1~T_1~<_1~J_{k+2}] \rightsquigarrow
[J_k~<_1~T_2~<_1~J_{k+2}]$.
\end{proof}

Thus the sheaves $\hh_q$ are well defined. These sheaves will be
called the structure sheaves of $Q$. Consider also the truncated
structure sheaves
\[
\hht_q(I)=\begin{cases} \hh_q(I) \mbox{ if } I\neq\varnothing,\\
0,\mbox{ if } I=\varnothing.
\end{cases}
\]
\end{con}

\begin{cor}
The cochain complexes of structure sheaves coincide with
$\Eq^1_{*,*}$ up to change of indices:
\[
(\Eq^1_{*,q},\dqq^1)\cong(C^{n-1-*}(\hh_q),d),\qquad
(\Eqt^1_{*,q},\dqq^1)\cong(C^{n-1-*}(\hht_q),d).
\]
\end{cor}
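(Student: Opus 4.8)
The plan is to unwind both sides of the claimed isomorphism and observe that they are literally the same direct sum equipped with the same differential, after reindexing $p \mapsto n-1-p$. First I would recall that for a simple pseudo-cell complex $Q$ of dimension $n$, the faces $F$ with $\dim F = p$ correspond bijectively to elements $I \in S_Q$ with $\dim I = n-1-p$ (this is Definition \ref{definSimplePseudocellCpx}). Hence
\[
\Eq^1_{p,q} = H_{p+q}(Q_p, Q_{p-1}) \cong \bigoplus_{\dim F = p} H_{p+q}(F, \dd F) = \bigoplus_{\dim I = n-1-p} H_{q + \dim F_I}(F_I, \dd F_I) = \bigoplus_{\dim I = n-1-p} \hh_q(I),
\]
which by the definition of the cochain complex of a cellular sheaf is exactly $C^{n-1-p}(S_Q; \hh_q)$. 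So as graded $\ko$-modules the two complexes agree under $p \leftrightarrow n-1-p$.

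Next I would check that the differentials match. The differential $\dqq^1$ on $\Eq^1_{*,*}$ is, by the discussion preceding Construction \ref{conSheafOnQ}, the sum over pairs $I <_1 J$ of the maps $m^q_{I,J}$ of \eqref{eqMattachingMap}; more precisely, in the standard spectral sequence of a filtered complex the component of $\dqq^1$ from the summand indexed by $F_I$ to the summand indexed by $F_J$ carries the incidence sign $\inc{J}{I}$ coming from the chosen sign convention on $S_Q$, so it is $\inc{J}{I}\, m^q_{I,J}$. On the sheaf side, the differential $d$ on $C^*(S_Q;\hh_q)$ is by definition $\bigoplus_{I <_1 I'} [I':I]\,\hh_q(I \leqslant I')$, and $\hh_q(I<_1 J) = \inc{J}{I}\, m^q_{I,J}$ by Construction \ref{conSheafOnQ}. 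These coincide term by term. The only subtlety is a bookkeeping one: the cohomological differential $d$ raises the cellular degree $\dim I$ by one, while $\dqq^1$ lowers $p$ by one, and under $p = n-1-\dim I$ these are the same operation; one should also confirm the sign conventions used to set up $\Eq^1$ are the same ones used to define $\hh_q$, which is exactly the hypothesis "given a sign convention on $S_Q$" in Construction \ref{conSheafOnQ}.

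For the truncated version the argument is identical, with one extra point: the truncated spectral sequence $\Eqt^1_{p,q}$ only has entries for $p < n$, i.e. it omits the unique top face $F_\varnothing = Q$, which corresponds to $\varnothing \in S_Q$; and $\hht_q$ is by definition the sheaf $\hh_q$ with its value at $\varnothing$ set to zero. So again the two complexes agree summand by summand under $p \leftrightarrow n-1-p$, and the differentials restrict compatibly. I expect the main (very minor) obstacle to be purely notational: making sure the reindexing $p \mapsto n-1-p$ is applied consistently and that the incidence signs appearing in the topologist's spectral-sequence differential are identified with the sheaf-theoretic ones $[\,\cdot:\cdot\,]$ rather than some a priori different choice — but since Construction \ref{conSheafOnQ} builds $\hh_q$ using precisely those signs, this is automatic.
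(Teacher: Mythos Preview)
Your approach is correct and is exactly the paper's one-line proof unpacked in detail: identify the summands via $\dim F_I = n-1-\dim I$ and check that the differentials agree by the way $\hh_q$ was built from $\dqq^1$ in Construction~\ref{conSheafOnQ}.

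One small bookkeeping slip: in the paper's conventions $\dqq^1$ is the \emph{unsigned} sum $\sum m^q_{I,J}$ (see the sentence before \eqref{eqMattachingMap}), while the sign is absorbed into the restriction maps $\hh_q(I<_1J)=\inc{J}{I}\,m^q_{I,J}$; the sheaf differential then reintroduces $\inc{J}{I}$, so on the sheaf side the component is $\inc{J}{I}^2 m^q_{I,J}=m^q_{I,J}$, matching $\dqq^1$. Your statement that $\dqq^1$ itself carries the incidence sign is off, but since $\inc{J}{I}^2=1$ the conclusion is unaffected.
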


\begin{proof}
Follows from the definition of the cochain complex of a sheaf.
\end{proof}

\begin{rem}\label{remLocalSheavesOnPoset}
Let $S$ be a pure simplicial poset of dimension $n-1$ and $P(S)$
--- its dual simple pseudo-cell complex. In this case there exists an
isomorphism of sheaves
\begin{equation}\label{eqLocStrIsom}
\hht_q\cong \loc_{q+n-1},
\end{equation}
where $\loc_*$ are the sheaves of local homology defined in
example \ref{exSheafLocHomol}. Indeed, it can be shown that
$H_i(S,S\setminus\lk_SI)\cong H_{i-\dim I}(F_I,\dd F_I)$ and these
isomorphisms can be chosen compatible with restriction maps. For
simplicial complexes this fact is proved in
\cite[Sec.6.1]{McCrory}; the case of simplicial posets is rather
similar. Note that $\hh_q$ depends on the sign convention while
$\loc$ does not. There is a simple explanation: the isomorphism
\eqref{eqLocStrIsom} itself depends on the orientations of
simplices.
\end{rem}

\subsection{Buchsbaum pseudo-cell complexes}
\begin{defin}\label{defBuchsCell}
A simple pseudo-cell complex $Q$ of dimension $n$ is called
Buchsbaum if for any face $F_I\subset Q$, $I\neq\varnothing$ the
following conditions hold:
\begin{enumerate}
\item $F_I$ is acyclic over $\Zo$;
\item $H_i(F_I,\dd F_I)=0$ if $i\neq\dim F_I$.
\end{enumerate}
Buchsbaum complex $Q$ is called Cohen--Macaulay if these two
conditions also hold for $I=\varnothing$.
\end{defin}

The second condition in Buchsbaum case is equivalent to $\hht_q =
0$ for $q\neq 0$. Cohen--Macaulay case is equivalent to $\hh_q =
0$ for $q\neq 0$. Obviously, $Q$ is Buchsbaum if and only if all
its proper faces are Cohen--Macaulay. Thus any face of dimension
$p\geqslant 1$ has nonempty boundary of dimension $p-1$. In
particular, this implies $S_Q$ is pure.

\begin{defin}
Buchsbaum pseudo-cell complex $Q$ is called ($\ko$-orientable)
Buchsbaum manifold if $\hht_0$ is isomorphic to a constant sheaf
$\ko$.
\end{defin}

Note that this definition actually describes only the property of
$\dd Q$ not $Q$ itself.

\begin{ex}
If $Q$ is a nice compact manifold with corners in which every
proper face is acyclic and orientable, then $Q$ is a Buchsbaum
pseudo-cell complex. Indeed, the second condition of
\ref{defBuchsCell} follows by Poincare--Lefschetz duality. If,
moreover, $Q$ is orientable itself then $Q$ is a Buchsbaum
manifold (over all $\ko$). Indeed, the restriction maps
$\hh_0(\varnothing\subset I)$ send the fundamental cycle $[Q]\in
H_n(Q,\dd Q)\cong\ko$ to fundamental cycles of proper faces, thus
identifying $\hht_0$ with the constant sheaf $\ko$. The choice of
orientations establishing this identification is described in
details in section \ref{secEquivarGeometry}.
\end{ex}

\begin{ex}
Simplicial poset $S$ is Buchsbaum (resp. Cohen--Macaulay) whenever
$P(S)$ is a Buchsbaum (resp. Cohen--Macaulay) simple pseudo-cell
complex. Indeed, any face of $P(S)$ is a cone, thus contractible.
On the other hand, $H_i(F_I,\dd F_I)\cong
H_i(\cone|\lk_SI|,|\lk_SI|)\cong \Hr_{i-1}(|\lk_SI|)$. Thus
condition 2 in definition \ref{defBuchsCell} is satisfied whenever
$\Hr_{i}(\lk_SI)=0$ for $i\neq n-1-|I|$. This is equivalent to
Buchsbaumness (resp. Cohen--Macaulayness) of $S$ by remark
\ref{remBuchPoset}.

Poset $S$ is a homology manifold if and only if $P(S)$ is a
Buchsbaum manifold. This follows from remark
\ref{remLocalSheavesOnPoset}. In particular, if $|S|$ is a closed
orientable manifold then $P(S)$ is a Buchsbaum manifold.
\end{ex}

In general, if $Q$ is Buchsbaum, then its underlying poset $S_Q$
is also Buchsbaum, see lemma \ref{lemmaUniverPosets} below.

In Buchsbaum (resp. Cohen--Macaulay) case the spectral sequence
$\Eqt$ (resp. $\Eq$) collapses at the second page, thus
\[
H^{n-1-p}(S_Q;\hht_0)\cong
\Eqt^2_{p,0}\stackrel{\cong}{\Rightarrow} H_p(\dd Q),\qquad\mbox{
if } Q\mbox{ is Buchsbaum}
\]
\[
H^{n-1-p}(S_Q;\hh_0)\cong \Eq^2_{p,0}\stackrel{\cong}{\Rightarrow}
H_p(Q),\qquad\mbox{ if }  Q\mbox{ is Cohen--Macaulay}
\]
In particular, if $Q$ is a Buchsbaum manifold, then
\begin{equation}\label{eqBuchManPoincS}
H^{n-1-p}(S_Q)\cong H_p(\dd Q)
\end{equation}

Let $Q$ be a simple pseudo-cell complex, and $\varnothing \neq
I\in S_Q$. The face $F_I$ is a simple pseudo-cell complex itself,
and $S_{F_I}=\lk_SI$. The structure sheaves of $F_I$ are the
restrictions of $\hh_q$ to $\lk_{S_Q}I\subset S_Q$. If $Q$ is
Buchsbaum, then $F_I$ is Cohen--Macaulay, thus
\begin{equation}\label{eqCollapsingAtCMFace}
H^k(\lk_SI;\hh_0)\stackrel{\cong}{\Rightarrow} H_{\dim
F_I-1-k}(F_I),
\end{equation}
which is either $\ko$ (in case $k=\dim F_I-1$) or $0$ (otherwise),
since $F_I$ is acyclic.

\subsection{Universality of posets}
The aim of this subsection is to show that Buchsbaum pseudo-cell
complex coincides up to homology with the underlying simplicial
poset away from maximal cells. This was proved for nice manifolds
with corners in \cite{MasPan} and essentially we follow the proof
given there.

\begin{lemma}\label{lemmaUniverPosets}\mbox{}

$(1)_n$ Let $Q$ be Buchsbaum pseudo-cell complex of dimension $n$,
$S_Q$ --- its underlying poset, and $P=P(S_Q)$ --- simple
pseudo-cell complex associated to $S_Q$ (example
\ref{exSimplePseudoPoset}), $\dd P=|S_Q|$. Then there exists a
face-preserving map $\varphi\colon Q\to P$ which induces the
identity isomorphism of posets and the isomorphism of the
truncated spectral sequences $\varphi_*\colon
\Eqt^r_{*,*}\stackrel{\cong}{\to}\Ept^r_{*,*}$ for $r\geqslant 1$.

$(2)_n$ If $Q$ is Cohen--Macaulay of dimension $n$, then $\varphi$
induces the isomorphism of non-truncated spectral sequences
$\varphi_*\colon \Eq^r_{*,*}\stackrel{\cong}{\to}\Ep^r_{*,*}$.
\end{lemma}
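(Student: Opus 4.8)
The plan is to construct the map $\varphi$ by downward induction on the skeleton filtration and to check the isomorphism on spectral sequences term by term, reducing everything to the behavior on individual faces. First I would set up the induction. The statement $(1)_n$ and $(2)_n$ will be proved simultaneously: note that if $Q$ is Buchsbaum of dimension $n$, then every proper face $F_I$ ($I\neq\varnothing$) is a Cohen--Macaulay pseudo-cell complex of dimension $\dim F_I = n-1-\dim I < n$, and $S_{F_I} = \lk_{S_Q}I$, while $P(S_{F_I})$ is exactly the face of $P = P(S_Q)$ dual to $I$. So $(1)_n$ for $Q$ should follow from $(2)_{<n}$ applied to all proper faces, assembled compatibly; and $(2)_n$ for a Cohen--Macaulay $Q$ follows from $(1)_n$ for the Buchsbaum complex $Q$ together with an analysis of the top cell $F_\varnothing = Q$ itself (where one uses that $Q$ is acyclic and $H_i(Q,\dd Q)$ is concentrated in degree $n$).

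Next, the construction of $\varphi$. I would build $\varphi$ skeleton by skeleton, starting from $Q_0$. Over $Q_0$ (a finite set of points indexed by the maximal simplices of $S_Q$) there is an obvious identification with the corresponding vertices of $P$. Inductively, suppose $\varphi$ has been defined on $Q_{k-1}$ compatibly with faces and inducing the identity on posets. For each $k$-dimensional face $F_I$, its boundary $\dd F_I \subset Q_{k-1}$ already maps to $\dd (F_I^P)\subset P_{k-1}$, where $F_I^P$ is the dual face of $P$; I extend over the cell by coning, using that $F_I^P = \cone|\lk_{S_Q}I|$ is a cone, so any map into it from $\dd F_I$ extends to $F_I$ (and for the induced map on relative homology I invoke the inductive hypothesis $(2)$ applied to the Cohen--Macaulay complex $F_I$, or rather to its subfaces, to know $\dd F_I \to \dd F_I^P$ is already a homology isomorphism in the relevant degrees). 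Since $F_I$ is acyclic over $\Zo$ and $H_i(F_I,\dd F_I)$ is concentrated in degree $\dim F_I = \dim F_I^P$, the map $H_*(F_I,\dd F_I)\to H_*(F_I^P,\dd F_I^P)$ is an isomorphism of groups concentrated in a single degree — this is the content of Buchsbaumness (conditions (1) and (2) of Definition \ref{defBuchsCell}), combined with the fact that for $P$ one has $H_i(F_I^P,\dd F_I^P)\cong \Hr_{i-1}(|\lk_{S_Q}I|)$, also concentrated in the top degree when $S_Q$ is Buchsbaum.

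Finally, the comparison of spectral sequences. Since $\varphi$ is face-preserving and preserves the filtration by skeleta, it induces a morphism $\varphi_*\colon \Eqt^r\to\Ept^r$ (and $\Eq^r\to\Ep^r$ in the Cohen--Macaulay case). On $\Eqt^1_{p,q} = \bigoplus_{\dim F_I = p} H_{p+q}(F_I,\dd F_I)$ the map is the direct sum of the maps $H_{p+q}(F_I,\dd F_I)\to H_{p+q}(F_I^P,\dd F_I^P)$, each of which is an isomorphism by the face-by-face analysis above (for $p<n$ in the truncated case, i.e. for proper faces; in the Cohen--Macaulay case also for $p=n$, using acyclicity of $Q$ and $P\cong\cone|S_Q|$). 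An isomorphism on the $E^1$-page induces an isomorphism on all later pages, giving $(1)_n$ and $(2)_n$.

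The main obstacle I anticipate is making the inductive construction of $\varphi$ genuinely \emph{compatible across all faces at once} — when extending over a $k$-cell $F_I$ one must ensure the extension is compatible with the already-chosen extensions over the lower faces $F_J$ ($J>I$) sitting inside $\dd F_I$, and that the resulting map really induces the identity on posets rather than some automorphism. The coning construction does this more or less formally because $P$ is a cone complex (each $F_I^P$ is literally the cone on $\dd F_I^P$, and $\dd F_I^P$ is glued from the dual faces of $\lk_{S_Q}I$), but the bookkeeping — choosing cone-point images consistently and verifying the square \eqref{eqSquareCond} compatibility of the attaching data — is where the real work lies. A secondary subtlety is checking that $\varphi_*$ respects the identification of $\Eqt^1_{*,q}$ with the cochain complex $C^{n-1-*}(\hht_q)$ so that the differentials genuinely agree; this follows once one knows the isomorphisms $H_*(F_I,\dd F_I)\xrightarrow{\cong} H_*(F_I^P,\dd F_I^P)$ commute with the attaching maps $m^q_{I,J}$, which is built into the coning construction by naturality of the connecting homomorphism and excision.
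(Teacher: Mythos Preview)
Your proposal is correct and follows essentially the same approach as the paper: construct $\varphi$ skeleton by skeleton using that every cell of $P$ is a cone, and run the induction scheme $(2)_{\leqslant n-1}\Rightarrow(1)_n\Rightarrow(2)_n$, where the implication $(1)_n\Rightarrow(2)_n$ is exactly the five-lemma argument on the long exact sequences of the pairs $(Q,\dd Q)$ and $(P,\dd P)$. One small caution: your aside that $H_*(F_I^P,\dd F_I^P)$ is concentrated in top degree ``when $S_Q$ is Buchsbaum'' is circular (Buchsbaumness of $S_Q$ is a \emph{corollary} of this lemma), but you do not actually need that observation---the isomorphism on relative homology already follows from the inductive hypothesis $(2)_{\dim F_I}$ applied to $F_I$, which gives the $E^1$-isomorphism (including the top entry $H_*(F_I,\dd F_I)$) directly.
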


\begin{proof}
The map $\varphi$ is constructed inductively. $0$-skeleta of $Q$
and $P$ are naturally identified. There always exists an extension
of $\varphi$ to higher-dimensional faces since all pseudo-cells of
$P$ are cones. The lemma is proved by the following scheme of
induction: $(2)_{\leqslant n-1}\Rightarrow (1)_n\Rightarrow
(2)_n$. The case $n=0$ is clear. Let us prove $(1)_n\Rightarrow
(2)_n$. The map $\varphi$ induces the homomorphism of the long
exact sequences:
\[
\xymatrix{ \Hr_*(\dd
Q)\ar@{->}[r]\ar@{->}[d]&\Hr_*(Q)\ar@{->}[r]\ar@{->}[d]& H_*(Q,\dd
Q)\ar@{->}[r]\ar@{->}[d]&\Hr_{*-1}(\dd
Q)\ar@{->}[r]\ar@{->}[d]&\Hr_{*-1}(Q)\ar@{->}[d]\\
\Hr_*(\dd P)\ar@{->}[r]&\Hr_*(P)\ar@{->}[r]& H_*(P,\dd
P)\ar@{->}[r]&\Hr_{*-1}(\dd P)\ar@{->}[r]&\Hr_{*-1}(P) }
\]
The maps $\Hr_*(Q)\to\Hr_*(P)$ are isomorphisms since both groups
are trivial. The maps $\Hr_*(\dd Q)\to\Hr_*(\dd P)$ are
isomorphisms by $(1)_n$, since
$\Eqt\stackrel{\cong}{\Rightarrow}H_*(\dd Q)$ and
$\Ept\stackrel{\cong}{\Rightarrow}H_*(\dd P)$. Five lemma shows
that $\varphi_*\colon\Eq^1_{n,*}\to \Ep^1_{n,*}$ is an isomorphism
as well. This imply $(2)_n$.

Now we prove $(2)_{\leqslant n-1}\Rightarrow (1)_n$. Let $F_I$ be
faces of $Q$ and $\widetilde{F}_I$ --- faces of $P$. All proper
faces of $Q$ are Cohen--Macaulay of dimension $\leqslant n-1$.
Thus $(2)_{\leqslant n-1}$ implies isomorphisms $H_*(F_I,\dd
F_I)\to H_*(\widetilde{F}_I,\dd \widetilde{F}_I)$ which sum
together to the isomorphism
$\varphi_*\colon\Eqt^1_{*,*}\stackrel{\cong}{\to}\Ept^1_{*,*}$.
\end{proof}

\begin{cor}\label{corQbuchSbuch}
If $Q$ is a Buchsbaum (resp. Cohen--Macaulay) pseudo-cell complex,
then $S_Q$ is a Buchsbaum (resp. Cohen--Macaulay) simplicial
poset. If $Q$ is a Buchsbaum manifold, then $S_Q$ is a homology
manifold.
\end{cor}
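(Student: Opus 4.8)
The plan is to derive Corollary \ref{corQbuchSbuch} directly from Lemma \ref{lemmaUniverPosets} by translating the isomorphisms of spectral sequences into statements about the cohomology of structure sheaves, and then invoking the sheaf-theoretic characterizations of the Buchsbaum, Cohen--Macaulay, and homology-manifold properties recorded earlier (Remark \ref{remBuchPoset}, Example \ref{exSheafLocHomol}, and the isomorphism \eqref{eqLocStrIsom}). Concretely: to prove $S_Q$ is Buchsbaum it suffices, by Remark \ref{remBuchPoset}, to show $\Hr_i(\lk_{S_Q}I)=0$ for all $\varnothing\neq I\in S_Q$ and all $i\neq n-1-|I|$.

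First I would reduce the link computation to faces of $Q$. For $\varnothing\neq I\in S_Q$, the face $F_I$ is itself a simple pseudo-cell complex with $S_{F_I}=\lk_{S_Q}I$, and since $Q$ is Buchsbaum, $F_I$ is Cohen--Macaulay of dimension $\dim F_I = n-1-|I|+1 = n-|I|$ (here I use $\dim I=|I|-1$). Applying part $(2)$ of Lemma \ref{lemmaUniverPosets} to $F_I$ in place of $Q$, I get a face-preserving map $\varphi\colon F_I\to \widetilde F_I=P(\lk_{S_Q}I)$ inducing isomorphisms of the non-truncated spectral sequences, hence an isomorphism $H_*(F_I)\cong H_*(\widetilde F_I)$. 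But $\widetilde F_I$ is the cone over $|\lk_{S_Q}I|$, so $H_*(\widetilde F_I, \dd\widetilde F_I)\cong \Hr_{*-1}(|\lk_{S_Q}I|)$, and combining with the Cohen--Macaulay collapse of the spectral sequence for $F_I$ (equation \eqref{eqCollapsingAtCMFace}, which gives $H_k(\lk_{S_Q}I;\hh_0)\Rightarrow H_{\dim F_I - 1 - k}(F_I)$) one reads off that $\Hr_j(\lk_{S_Q}I)$ vanishes except in the single degree $j=\dim F_I - 1 = n-1-|I|$. Strictly, the cleanest route is: Lemma \ref{lemmaUniverPosets}$(2)$ identifies $\Eqt^r_{*,*}$ for $F_I$ with that for $\widetilde F_I$, whose $\dd$-part computes $H_*(\dd\widetilde F_I)=H_*(|\lk_{S_Q}I|)$; since for the cone $\widetilde F_I$ all structure sheaves $\hht_q$ with $q\neq 0$ vanish (Definition \ref{defBuchsCell}, Cohen--Macaulay case, applied via $P(S)$), the spectral sequence collapses and gives $\Hr_i(\lk_{S_Q}I)=H^{(n-|I|)-1-i}(\lk_{S_Q}I;\hh_0)$ concentrated in the top degree. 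This establishes Buchsbaumness of $S_Q$. For the Cohen--Macaulay case one additionally applies the same argument with $I=\varnothing$ (legitimate because $Q$ Cohen--Macaulay means conditions (1),(2) of Definition \ref{defBuchsCell} hold for $I=\varnothing$ too, so Lemma \ref{lemmaUniverPosets}$(2)_n$ applies to $Q$ itself) to conclude $\Hr_i(|S_Q|)=0$ for $i\neq n-1$, which together with the link conditions is exactly Cohen--Macaulayness of $S_Q$ by Remark \ref{remBuchPoset}.

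For the homology-manifold statement, suppose $Q$ is a Buchsbaum manifold, i.e.\ $\hht_0$ is isomorphic to the constant sheaf $\ko$ on $S_Q$. By Remark \ref{remLocalSheavesOnPoset}, applied to the pure simplicial poset $S_Q$ and its dual $P(S_Q)$, there is an isomorphism of sheaves $\hht_q\cong\loc_{q+n-1}$ on $S_Q$; and the face-preserving map $\varphi\colon Q\to P=P(S_Q)$ from Lemma \ref{lemmaUniverPosets}$(1)_n$ identifies the truncated structure sheaves of $Q$ with those of $P$ compatibly (this compatibility is exactly what is needed, and follows from the fact that $\varphi_*$ is an isomorphism of spectral sequences already at the $E^1$-page, i.e.\ of the cochain complexes of $\hht_*$). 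Hence $\loc_{n-1}^{S_Q}\cong\hht_0^{P}\cong\hht_0^{Q}\cong\ko$ as sheaves on $S_Q$, while $\loc_i^{S_Q}\cong\hht_{i-n+1}^{Q}=0$ for $i<n-1$ since $Q$ is Buchsbaum. By the definition in Example \ref{exSheafLocHomol} together with the definition of homology manifold for posets, this says precisely that $S_Q$ is a ($\ko$-orientable) homology manifold.

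The main obstacle I anticipate is bookkeeping the identification \eqref{eqLocStrIsom} of Remark \ref{remLocalSheavesOnPoset} as an isomorphism of sheaves (not merely of the individual groups), and checking that the map $\varphi$ of Lemma \ref{lemmaUniverPosets} respects these identifications. Since the excerpt only asserts \eqref{eqLocStrIsom} with a reference to \cite[Sec.~6.1]{McCrory} ``the case of simplicial posets is rather similar,'' I would either take it as given (it is stated earlier and we are allowed to use it) or, to be self-contained, observe that both $\hht_0^{Q}$ and $\loc_{n-1}^{S_Q}$ are computed by the same coskeleton filtration after applying $\varphi$, so that the comparison map $\varphi$ transports one to the other. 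Everything else is a routine unwinding of the spectral-sequence collapses already proved in the Buchsbaum/Cohen--Macaulay discussion preceding the statement, so the corollary is essentially a formal consequence of Lemma \ref{lemmaUniverPosets} plus Remark \ref{remBuchPoset} and Remark \ref{remLocalSheavesOnPoset}.
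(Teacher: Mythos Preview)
Your proposal is correct and follows the route the paper intends: the corollary is stated without proof immediately after Lemma \ref{lemmaUniverPosets}, so it is meant to drop out of that lemma together with the earlier remarks you cite. Your homology-manifold argument in particular is exactly what the paper has in mind: the $E^1$-isomorphism of Lemma \ref{lemmaUniverPosets}$(1)_n$ is an isomorphism of cochain complexes of the truncated structure sheaves, hence identifies $\hht_0^{Q}$ with $\hht_0^{P(S_Q)}$ as sheaves, and then Remark \ref{remLocalSheavesOnPoset} converts this into $\loc_{n-1}\cong\ko$.

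One simplification worth noting: for the Buchsbaum and Cohen--Macaulay statements you work face by face, applying $(2)$ to each $F_I$ and then invoking Remark \ref{remBuchPoset}. This is fine, but it is shorter to apply Lemma \ref{lemmaUniverPosets} once to $Q$ itself. The $E^1$-isomorphism from $(1)_n$ (resp.\ $(2)_n$) transports the vanishing $\hht_q^{Q}=0$ for $q\neq 0$ (resp.\ $\hh_q^{Q}=0$) directly to $P=P(S_Q)$; since every face of $P$ is a cone and hence acyclic, this is precisely the statement that $P$ is Buchsbaum (resp.\ Cohen--Macaulay), which by the example preceding Corollary \ref{corQbuchSbuch} is equivalent to $S_Q$ having that property. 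Your version reaches the same conclusion via the link characterization, but the sentence ``since for the cone $\widetilde F_I$ all structure sheaves $\hht_q$ with $q\neq 0$ vanish'' reads as if you are assuming what you want to prove; you should make explicit that this vanishing is inherited from $F_I$ through the $E^1$-isomorphism you just established, not asserted a priori for $\widetilde F_I$.
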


In particular, according to lemma \ref{lemmaUniverPosets}, if $Q$
is Buchsbaum, then $\dd Q$ is homologous to $|S_Q|=\dd P(S_Q)$. So
in the following we may not distinguish between their homology. If
$Q$ is Buchsbaum manifold, then \eqref{eqBuchManPoincS} implies
Poincare duality for $\dd Q$:
\begin{equation}\label{eqBuchManPoinc}
H^{n-1-p}(\dd Q)\cong H_p(\dd Q).
\end{equation}

\subsection{Structure of $\Eq^r_{*,*}$ in Buchsbaum
case}\label{subsecSpecSeqQ} Let $\delta_i\colon H_i(Q,\dd Q)\to
H_{i-1}(\dd Q)$ be the connecting homomorphisms in the long exact
sequence of the pair $(Q,\dd Q)$.

\begin{lemma}
The second term of $\Eq^*_{*,*}$ for Buchsbaum pseudo-cell complex
$Q$ is described as follows:
\begin{equation}\label{eqQsseq2}
\Eq^2_{p,q}\cong\begin{cases} H_p(\dd Q), \mbox{ if } p\leqslant n-2, q=0,\\
\coker\delta_n,\mbox{ if }p=n-1, q=0,\\
\Ker\delta_n,\mbox{ if }p=n, q=0,\\
H_{n+q}(Q,\dd Q),\mbox{ if } p=n, q<0,\\
0,\mbox{ otherwise}.
\end{cases}
\end{equation}
\end{lemma}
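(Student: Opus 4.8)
The plan is to combine the collapsing of the truncated spectral sequence $\Eqt^r_{*,*}$ (already established for Buchsbaum $Q$) with the long exact sequence of the pair $(Q,\dd Q)$. First I would recall that the first page is $\Eq^1_{p,q}=H_{p+q}(Q_p,Q_{p-1})\cong\bigoplus_{\dim F_I=p}H_{p+q}(F_I,\dd F_I)$, and by the Buchsbaum condition (2) of Definition \ref{defBuchsCell} each summand with $I\neq\varnothing$ vanishes unless $p+q=\dim F_I=p$, i.e.\ unless $q=0$. The only column in which $\Eq^1_{p,q}$ can be nonzero for $q\neq 0$ is $p=n$ (coming from the single maximal cell $F_\varnothing=Q$), where $\Eq^1_{n,q}=H_{n+q}(Q,\dd Q)$. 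So the first page is already concentrated in the row $q=0$ together with the vertical half-line $p=n$, $q\leqslant 0$.

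Next I would analyze the differentials touching that extra column $p=n$. The only differentials into or out of $\Eq^r_{n,q}$ with $q<0$ land in or come from entries $\Eq^r_{n-r,q+r-1}$, which for $q<0$ have second index $q+r-1$; since for $q\leqslant -1$ and $r\geqslant 1$ one has $q+r-1<r-1$... more precisely the target/source lies in a column $\leqslant n-1$ and in a row that is $\geqslant 1$ only when $r\geqslant 2-q$, but all entries in columns $\leqslant n-1$ are concentrated in row $q=0$, so $d^r$ vanishes on $\Eq^r_{n,q}$ for every $q<0$ and $r\geqslant 1$, and nothing hits these entries either. Hence $\Eq^2_{n,q}=\Eq^1_{n,q}=H_{n+q}(Q,\dd Q)$ for $q<0$, giving the fourth line. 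For the row $q=0$: the truncated subcomplex $\Eqt^1_{*,0}$ computes $H_*(\dd Q)$ and, by the collapsing result for Buchsbaum $Q$, $\Eqt^2_{p,0}\cong H_p(\dd Q)$. The only place where $\Eq^1_{*,0}$ differs from $\Eqt^1_{*,0}$ is the entry $p=n$, where the full complex has the extra term $\Eq^1_{n,0}=H_n(Q,\dd Q)$ with incoming differential $d^1=\dqq^1\colon \Eq^1_{n,0}\to\Eq^1_{n-1,0}$. I would identify this $d^1$ (up to the identification of $\Eq^1_{*,0}$ with the cochain complex $C^{n-1-*}(\hht_0)$, together with the top term) with the connecting homomorphism $\delta_n\colon H_n(Q,\dd Q)\to H_{n-1}(\dd Q)$, followed by the surjection $H_{n-1}(\dd Q)=\Eqt^1_{n-1,0}/\im\to\ldots$; more carefully, the long exact sequence of $(Q,\dd Q)$ shows that adjoining the term $H_n(Q,\dd Q)$ with differential $\delta_n$ to the complex computing $\Hr_*(\dd Q)$ changes homology exactly at positions $n-1$ and $n$: the cokernel of $\delta_n$ replaces $H_{n-1}(\dd Q)$ and the kernel of $\delta_n$ appears in position $n$. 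This yields lines two and three, and the remaining entries are zero.

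The step I expect to be the main obstacle is the precise bookkeeping that identifies the edge differential $d^1\colon\Eq^1_{n,0}\to\Eq^1_{n-1,0}$ with the connecting map $\delta_n$ and shows that inserting this one term into the acyclic-tail argument only perturbs homology in degrees $n-1$ and $n$. Concretely: the pair sequence $\cdots\to H_n(Q)\to H_n(Q,\dd Q)\xrightarrow{\delta_n} H_{n-1}(\dd Q)\to H_{n-1}(Q)\to\cdots$ together with acyclicity of $Q$ (condition (1) of Definition \ref{defBuchsCell}, for proper faces, and the hypothesis is not imposed on $Q$ itself — so here I must be careful: $Q$ need not be acyclic, so $\Ker\delta_n\cong\tilde H_n(Q)$ and $\coker\delta_n\hookrightarrow\tilde H_{n-1}(Q)$, but the statement is phrased directly in terms of $\delta_n$, which sidesteps this). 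I would phrase the comparison via the short exact sequence of complexes $0\to \Eqt^1_{*,0}\to \Eq^1_{*,0}\to (H_n(Q,\dd Q)\text{ in degree }n)\to 0$ and its homology long exact sequence; since the quotient complex has homology only in degree $n$ (namely $H_n(Q,\dd Q)$) and $\Eqt^2_{p,0}=H_p(\dd Q)$, the connecting map of this long exact sequence is exactly $\delta_n$, and the six-term exactness immediately gives $\Eq^2_{n,0}=\Ker\delta_n$, $\Eq^2_{n-1,0}=\coker\delta_n$, and $\Eq^2_{p,0}=H_p(\dd Q)$ for $p\leqslant n-2$. Finally one checks there are no further differentials affecting the $q=0$ row past $r=2$ in the range $p\leqslant n-2$ and the $p=n$ column, but since $\Eq^2$ is already concentrated on $\{q=0\}\cup\{p=n\}$ with the $p=n$, $q<0$ part isolated, this is immediate; the cases $p=n-1,n$ and $q=0$ may still support higher differentials, which is why the statement only describes $\Eq^2_{*,*}$, not $\Eq^\infty_{*,*}$, and no further argument is needed here.
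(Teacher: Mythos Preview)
Your proposal is correct and follows essentially the same route as the paper. The paper phrases the key step as the short exact sequence of sheaves $0\to\hht_0\to\hh_0\to\hh_0/\hht_0\to 0$ and its long exact sequence in cohomology, while you phrase it as the short exact sequence of chain complexes $0\to\Eqt^1_{*,0}\to\Eq^1_{*,0}\to(H_n(Q,\dd Q)\text{ in degree }n)\to 0$; passing to cochain complexes these are literally the same sequence, and both arguments identify the connecting map with $\delta_n$ to obtain the $(n-1,0)$ and $(n,0)$ entries. Your discussion of higher differentials on the column $p=n$, $q<0$ is unnecessarily elaborate (for the $E^2$ statement only $d^1$ matters, and its target $\Eq^1_{n-1,q}$ vanishes for $q\neq 0$), but the conclusion is correct.
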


\begin{proof}
The first page of the non-truncated spectral sequence has the form
\begin{tikzpicture}
\tikzstyle{column 6}=[anchor=base west]

\matrix (m) [matrix of math nodes, nodes in empty
cells,nodes={minimum width=5ex, minimum height=5ex,outer
sep=-5pt}, column sep=0ex,row sep=0ex]{
                &     &     &   &   &  & \\
                &  C^{n-1}(S;\hh_0) &  \ldots  & C^1(S;\hh_0)  & C^0(S;\hh_0) & C^{-1}(S;\hh_0)=H_n(Q,\dd Q) & \\
    \strut      & 0  & \ldots &  0  &  0 &  C^{-1}(S;\hh_{-1})=H_{n-1}(Q,\dd Q) &\strut\\
                & \vdots  &    &  \vdots  & \vdots   &  \vdots &\\
                &  0 &  \ldots  & 0  & 0  &  C^{-1}(S;\hh_{-n})=H_0(Q,\dd Q) &\\};
\draw[thick] (m-1-1.east) -- ([yshift=-2mm]m-5-1.east);

\draw[thick] ([yshift=+3mm]m-2-1.north) --
([yshift=+3mm]m-2-7.north);

\draw ([xshift=+1mm,yshift=-1mm]m-5-1.south east) node{$q$};

\draw ([xshift=+3mm,yshift=+3mm]m-2-7.north) node{$p$};

\draw ([xshift=-6mm,yshift=+2mm]m-1-1.east) node{$\Eq^1_{p,q}$};

\draw ([yshift=+4mm]m-2-2.north) node{\small $0$}; \draw
([yshift=+4mm]m-2-4.north) node{\small $n-2$}; \draw
([yshift=+4mm]m-2-5.north) node{\small $n-1$}; \draw
([yshift=+4mm]m-2-6.north) node{\small $n$};

\draw ([xshift=-3mm]m-2-1.east) node{\small $0$}; \draw
([xshift=-3mm]m-3-1.east) node{\small $-1$}; \draw
([xshift=-3mm,yshift=+1mm]m-5-1.east) node{\small $-n$};

\end{tikzpicture}

By the definition of Buchsbaum complex, $\Eq^2_{p,q}=0$ if $p<n$
and $q\neq 0$. Terms of the second page with $p\leqslant n-2$
coincide with their non-truncated versions:
$\Eq^2_{p,0}=\Eqt^2_{p,0}\cong H^{n-1-p}(S_Q;\hht_0)\cong H_p(\dd
Q)$. For $p=n$, $q<0$ the first differential vanishes, thus
$\Eq^2_{n,q}=\Eq^1_{n,q}\cong H_{n+q}(Q,\dd Q)$. The only two
cases that require further investigation are $(p,q)=(n-1,0)$ and
$(n,0)$. To describe these cases consider the short exact sequence
of sheaves
\begin{equation}\label{eqTruncShort}
0\rightarrow \hht_0\rightarrow \hh_0\rightarrow
\hh_0/\hht_0\rightarrow 0.
\end{equation}
The quotient sheaf $\hh_0/\hht_0$ is concentrated in degree $-1$
and its value on $\varnothing$ is $H_n(Q,\dd Q)$. Sequence
\eqref{eqTruncShort} induces the long exact sequence in cohomology
(middle row):
\[
\xymatrix{
&& H_n(Q,\dd Q)\ar@{->}[r]^{\delta_n}&H_{n-1}(\dd Q)&&\\
0\ar@{->}[r]& H^{-1}(S_Q;\hh_0)\ar@{->}[r]\ar@{->}[d]^{\cong}&
H^{-1}(S_Q;\hh_0/\hht_0)\ar@{->}[r]\ar@{->}[u]^{\cong}&
H^0(S_Q;\hht_0)\ar@{->}[r]\ar@{->}[u]^{\cong}\ar@{->}[d]^{\cong}&
H^0(S_Q;\hh_0)\ar@{->}[r]\ar@{->}[d]^{\cong}&0\\
&\Eq^2_{n,0}&&\Eqt^2_{n-1,0}\ar@{->}[r]&\Eq^2_{n-1,0} }
\]
Thus $\Eq^2_{n,0}\cong \Ker\delta_n$ and $\Eq^2_{n-1,0}\cong
\coker\delta_n$.
\end{proof}

In the situations like this, we call a spectral sequence
\ang-shaped. The only non-vanishing differentials in $\Eq^r$ for
$r\geqslant 2$ are $d_r\colon \Eq^r_{n,1-r}\to \Eq^r_{n-r,0}$.
They have pairwise different domains and targets, thus
$\Eq^r_{p,q}\Rightarrow H_{p+q}(Q)$ folds in a long exact
sequence, which is isomorphic to the long exact sequence of the
pair $(Q,\dd Q)$:

\begin{equation}
\xymatrix{\ldots\ar@{->}[r]& H_i(Q)\ar@{=}[dd]\ar@{->}[r]&
\Eq_{n,i-n}^{n+1-i}\ar@{->}[r]^{\dqq^{n+1-i}}&
\Eq_{i-1,0}^{n+1-i}\ar@{->}[r]&
H_{i-1}(Q)\ar@{->}[r]\ar@{=}[dd]&\ldots\\
&&\Eq^1_{n,i-n}\ar@{->}[u]_{\cong}\ar@{=}[d]&\Eq^2_{i-1,0}\ar@{->}[d]^{\cong}\ar@{->}[u]_{\cong}&&\\
\ldots\ar@{->}[r]& H_i(Q)\ar@{->}[r]&H_i(Q,\dd Q)
\ar@{->}[r]^{\delta_i}& H_{i-1}(\dd
Q)\ar@{->}[r]&H_{i-1}(Q)\ar@{->}[r]&\ldots}
\end{equation}

This gives a complete characterization of $\Eq$ in terms of the
homological long exact sequence of the pair $(Q,\dd Q)$.

\subsection{Artificial page $\Eq^{1+}_{*,*}$}\label{subsecExtraTermQ}
In this subsection we formally introduce an additional term in the
spectral sequence to make description of $\Eq^*_{*,*}$ more
convenient and uniform. The goal is to carry away $\delta_n$
(which appears in \eqref{eqQsseq2}) from the description of the
page and treat it as one of higher differentials.

Let $\Ex^{1+}_{*,*}$ be the collection of $\ko$-modules defined by
\[
\Eq^{1+}_{p,q}\eqd\begin{cases} \Eqt^2_{p,q}, \mbox{ if } p\leqslant n-1,\\
\Eq^1_{p,q},\mbox{ if } p=n,\\
0,\mbox{ otherwise}.
\end{cases}
\]
Let $\dqq^{1-}$ be the differential of degree $(-1,0)$ operating
on $\bigoplus\Eq^1_{p,q}$ by:
\[
\dqq^{1-}=\begin{cases}\dqq^1\colon \Eq^1_{p,q}\to\Eq^1_{p-1,q},
\mbox{ if } p\leqslant n-1,\\0, \mbox{ otherwise}
\end{cases}
\]
It is easily seen that $H(\Eq^1_{*,*};\dqq^{1-})$ is isomorphic to
$\Eq^{1+}_{*,*}$. Now consider the differential $\dqq^{1+}$ of
degree $(-1,0)$ operating on $\bigoplus\Eq^{1+}_{p,q}$:
\[
\dqq^{1+}=\begin{cases} 0, \mbox{ if }p\leqslant n-1;\\
\Eq^1_{n,q}\stackrel{\dqq^1}{\longrightarrow}\Eq^1_{n-1,q}
\longrightarrow \Eq^2_{n-1,q},\mbox{ if }p=n.
\end{cases}
\]
Then $\Eq^2\cong H(\Eq^{1+},\dqq^{1+})$. These considerations are
shown on the diagram:
\[
\xymatrix{
&\Eq^{1+}\ar@{..>}[rd]^{\dqq^{1+}}&&&\\
\Eq^1\ar@{..>}[rr]^{\dqq^1}\ar@{..>}[ru]^{\dqq^{1-}}&&
\Eq^2\ar@{..>}[r]^{\dqq^2}&\Eq^3\ar@{..>}[r]^{\dqq^3}&\ldots }
\]
in which the dotted arrows represent passing to homology. To
summarize:

\begin{claim}\label{claimFormalPage}
There is a spectral sequence whose first page is $(\Eq^{1+},
\dqq^{1+})$ and subsequent terms coincide with $\Eq^r$ for
$r\geqslant 2$. Its nontrivial differentials for $r\geqslant 1$
are the maps
\[
\dqq^r\colon \Eq^r_{n,1-r}\to \Eq^r_{n-r,0}
\]
which coincide up to isomorphism with
\[
\delta_{n+1-r}\colon
H_{n+1-r}(Q,\dd Q)\to H_{n-r}(\dd Q).
\]
\end{claim}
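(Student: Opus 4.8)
The plan is to package the bookkeeping of the previous subsection into a single ``shifted'' spectral sequence and verify the claim page by page. First I would observe that the content of Claim~\ref{claimFormalPage} is purely formal once we have: (a) the isomorphism $H(\Eq^1_{*,*};\dqq^{1-})\cong\Eq^{1+}_{*,*}$, (b) the isomorphism $\Eq^2\cong H(\Eq^{1+};\dqq^{1+})$, and (c) the \ang-shaped structure of $\Eq^r$ for $r\geqslant 2$ established in subsection~\ref{subsecSpecSeqQ}. Items (a) and (b) were checked in the discussion preceding the claim; the only thing that genuinely needs proof is the identification of the differentials $\dqq^r\colon\Eq^r_{n,1-r}\to\Eq^r_{n-r,0}$ with the connecting maps $\delta_{n+1-r}$, and the statement that these are the \emph{only} nontrivial differentials. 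The latter is immediate from \eqref{eqQsseq2}: for $r\geqslant 2$ the page $\Eq^r$ is concentrated on the column $p=n$ (in total degrees $<n$) and the row $q=0$ (for $p\leqslant n-1$), so a differential $d_r$ of bidegree $(-r,r-1)$ can be nonzero only when its source lies in the column $p=n$ and its target in the row $q=0$, i.e.\ $d_r\colon\Eq^r_{n,1-r}\to\Eq^r_{n-r,0}$; all other components of $d_r$ have zero source or zero target.

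Next I would set up the comparison with the long exact sequence of the pair $(Q,\dd Q)$. In subsection~\ref{subsecSpecSeqQ} it was shown that the folding of the \ang-shaped sequence $\Eq^r\Rightarrow H_{p+q}(Q)$ reproduces exactly the long exact sequence
\[
\cdots\to H_i(Q)\to H_i(Q,\dd Q)\xrightarrow{\ \delta_i\ }H_{i-1}(\dd Q)\to H_{i-1}(Q)\to\cdots,
\]
via the identifications $\Eq^1_{n,i-n}\cong H_i(Q,\dd Q)$ and $\Eq^2_{i-1,0}\cong H_{i-1}(\dd Q)$ (using $H^{n-1-(i-1)}(S_Q;\hht_0)\cong H_{i-1}(\dd Q)$ from \eqref{eqBuchManPoincS} and the surrounding isomorphisms). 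Under these identifications the higher differential $\dqq^{n+1-i}\colon\Eq^{n+1-i}_{n,i-n}\to\Eq^{n+1-i}_{i-1,0}$ is precisely $\delta_i$; writing $r=n+1-i$, i.e.\ $i=n+1-r$, this reads $\dqq^r\colon\Eq^r_{n,1-r}\to\Eq^r_{n-r,0}$ corresponds to $\delta_{n+1-r}\colon H_{n+1-r}(Q,\dd Q)\to H_{n-r}(\dd Q)$, which is the asserted formula. For the new first differential $\dqq^{1+}$, i.e.\ $r=1$, the formula predicts $\delta_n\colon H_n(Q,\dd Q)\to H_{n-1}(\dd Q)$; but $\dqq^{1+}$ was defined as the composite $\Eq^1_{n,q}\xrightarrow{\dqq^1}\Eq^1_{n-1,q}\to\Eq^2_{n-1,q}$, and comparing with the long exact cohomology sequence of \eqref{eqTruncShort} — which is exactly where $\delta_n$ entered the description of $\Eq^2_{n,0}$ and $\Eq^2_{n-1,0}$ — identifies this composite with $\delta_n$ up to the canonical isomorphisms already in play. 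So the $r=1$ case is consistent with the uniform statement, which was the whole point of introducing the artificial page.

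Finally I would assemble these observations into the existence statement. The sequence of passages to homology
\[
\Eq^1\ \xrightarrow{\ \dqq^{1-}\ }\ \Eq^{1+}\ \xrightarrow{\ \dqq^{1+}\ }\ \Eq^2\ \xrightarrow{\ \dqq^2\ }\ \Eq^3\ \to\ \cdots
\]
displayed before the claim defines a (cohomologically indexed, but here homologically graded) spectral sequence in the usual sense: each page is the homology of the previous one with respect to a differential of the indicated bidegree, and the differentials square to zero trivially since on each relevant page they have pairwise disjoint nonzero domains and targets. Its first page is $(\Eq^{1+},\dqq^{1+})$ by construction, its page $r$ for $r\geqslant 2$ is the original $\Eq^r$ by item (b) and induction, and the identification of all nonzero differentials with the maps $\delta_{n+1-r}$ was just carried out. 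The main obstacle, such as it is, is purely notational: one must be careful that the ``connecting homomorphism'' appearing as $\dqq^{1+}$ really is $\delta_n$ and not merely ``some map with the same kernel and cokernel'' — this is settled by chasing the commutative ladder relating \eqref{eqTruncShort} to the pair $(Q,\dd Q)$, exactly as in the proof of \eqref{eqQsseq2}, so no new computation is needed. $\qed$
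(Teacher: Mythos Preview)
Your proposal is correct and follows essentially the same approach as the paper, which treats the claim as a summary of the preceding discussion (the paper gives no separate proof beyond the words ``To summarize'' and the subsequent diagram). One tiny quibble: the identification $\Eq^2_{i-1,0}\cong H_{i-1}(\dd Q)$ for $i-1\leqslant n-2$ comes from \eqref{eqQsseq2} (the collapse of the truncated sequence $\Eqt$), not from \eqref{eqBuchManPoincS}, which is the special case for Buchsbaum manifolds where $\hht_0\cong\ko$; this does not affect the argument.
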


Thus the spectral sequence $\Eq^r_{*,*}$ for $r\geqslant 1+$ up to
isomorphism has the form
\begin{tikzpicture}
\tikzstyle{column 6}=[anchor=base west]

\matrix (m) [matrix of math nodes, nodes in empty
cells,nodes={minimum width=5ex, minimum height=5ex,outer
sep=-5pt}, column sep=2ex,row sep=0ex]{
                &      &     &     &     &[10mm]     & \\
                &  H_0(\dd Q) &  \ldots  & H_{n-2}(\dd Q)  & H_{n-1}(\dd Q) & H_n(Q,\dd Q) & \\
    \strut      &   &  &    &   &  H_{n-1}(Q,\dd Q) &\strut\\
                &   &    &   &   &  \vdots &\\
                &  &    &   &   &  H_1(Q,\dd Q) &\\};
\draw[thick] (m-1-1.east) -- (m-5-1.east);

\draw[thick] ([yshift=+5mm]m-2-1.north) --
([xshift=-4mm,yshift=+5mm]m-2-7.north);

%

\draw ([xshift=-6mm,yshift=+2mm]m-1-1.east)
node{$\Eq^{1+}_{p,q}$};

\tikzstyle{every node}=[midway,auto,font=\scriptsize]

\draw[-stealth] ([xshift=-1mm]m-2-6.west) --
node[above]{$\dqq^{1+}=\delta_n$} ([xshift=+1mm]m-2-5.east);

\draw[-stealth] ([xshift=-2mm]m-3-6.west) --
node{$\dqq^{2}=\delta_{n-1}$} (m-2-4.south east);

\draw[-stealth] ([xshift=-2mm]m-5-6.west) --
node{$\dqq^{n}=\delta_1$} (m-2-2.south east);

\end{tikzpicture}


\section{Torus spaces over Buchsbaum pseudo-cell
complexes}\label{secTorusPrelim}

\subsection{Preliminaries on torus maps}\label{subsecT}
Let $\ld$ be a nonnegative integer. Consider a compact torus
$T^\ld=(S^1)^\ld$. The homology algebra $H_*(T^\ld;\ko)$ is the
exterior algebra $\Lambda = \Lambda_{\ko}[H_1(T^\ld)]$. Let
$\Lambda^{(q)}$ denote the graded component of $\Lambda$ of degree
$q$, $\Lambda=\bigoplus_{q=0}^\ld\Lambda^{(q)}$,
$\Lambda^{(q)}\cong H_q(T^\ld)$, $\dim \Lambda^{(q)}={\ld\choose
q}$.

If $T^\ld$ acts on a space $Z$, then $H_*(Z)$ obtains the
structure of $\Lambda$-module (i.e. two-sided $\Lambda$-module
with property $a\cdot x=(-1)^{\deg a\deg x}x\cdot a$ for $a\in
\Lambda$, $x\in H_*(Z)$); $T^\ld$-equivariant maps $f\colon Z_1\to
Z_2$ induce module homomorphisms $f_*\colon H_*(Z_1)\to H_*(Z_2)$;
and equivariant filtrations induce spectral sequences with
$\Lambda$-module structures.

\begin{con}
Let $\Tt_\ld$ be the set of all 1-dimensional toric subgroups of
$T^\ld$. Let $M$ be a finite subset of $\Tt_\ld$, i.e. a
collection of subgroups $M=\{T_s^1,i_s\colon T_s^1\hookrightarrow
T^\ld\}$. Consider the homomorphism
\[
i_M\colon T^M\to T^\ld, \quad T^M\eqd\prod\nolimits_MT_s^1,\quad
i_M\eqd\prod\nolimits_Mi_s.
\]

\begin{defin}
We say that the collection $M$ of $1$-dimensional subgroups
satisfies $\sta{\ko}$-condition if the map $(i_M)_*\colon
H_1(T^M;\ko)\to H_1(T^\ld;\ko)$ is injective and splits.
\end{defin}

If $i_M$ itself is injective, then $M$ satisfies $\sta{\ko}$ for
$\ko=\Zo$ and all fields. Moreover, $\sta{\Zo}$ is equivalent to
injectivity of $i_M$. Generally, $\sta{\ko}$ implies that
$\Gamma=\ker i_M$ is a finite subgroup of $T^M$.

For a set $M$ satisfying $\sta{\ko}$ consider the exact sequence
\[
0\rightarrow \Gamma \rightarrow T^M
\stackrel{i_M}{\longrightarrow} T^\ld
\stackrel{\rho}{\longrightarrow} G\rightarrow 0,
\]
where $G=T^\ld/i_M(T^M)$ is isomorphic to a torus $T^{\ld-|M|}$.

\begin{lemma}\label{lemmaIdealQuotTorus}
Let $\I_M$ be the ideal of $\Lambda$ generated by $i_M(H_1(T^M))$.
Then there exists a unique map $\beta$ which encloses the diagram
\[
\xymatrix{H_*(T^\ld)\ar@{->}[r]^{\rho_*}\ar@{->}[d]^{\cong}&H_*(G)\\
\Lambda \ar@{->}[r]^{q}& \Lambda/\I_M\ar@{..>}[u]_{\beta}}
\]
and $\beta$ is an isomorphism.
\end{lemma}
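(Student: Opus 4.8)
\emph{The plan.} I will use two standard facts: the Pontryagin homology algebra of a torus $T$ is canonically the exterior algebra $\Lambda_\ko[H_1(T;\ko)]$, and a continuous homomorphism of tori induces on homology an algebra homomorphism, determined in degree one by the induced linear map. First I would observe that exactness of $0\to\Gamma\to T^M\xrightarrow{i_M}T^\ld\xrightarrow{\rho}G\to 0$ at $T^\ld$ gives $\im i_M=\ker\rho$, so $\rho\circ i_M$ is trivial and therefore $\rho_*\circ(i_M)_*=0$ on $H_1$. Hence all the degree-one generators $i_M(H_1(T^M))$ of $\I_M$ lie in $\ker\rho_*$; since $\rho_*\colon\Lambda\cong H_*(T^\ld)\to H_*(G)$ is an algebra map, the whole ideal $\I_M$ is annihilated, so $\rho_*$ factors through $q\colon\Lambda\to\Lambda/\I_M$. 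This produces the dotted arrow $\beta$, which is unique (and automatically an algebra homomorphism) because $q$ is onto.

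It remains to see that $\beta$ is an isomorphism, which I would reduce to a statement in degree one. Since $H_1(G;\Zo)$ is free, the integral sequence $0\to H_1(i_M(T^M);\Zo)\to H_1(T^\ld;\Zo)\xrightarrow{\rho_*}H_1(G;\Zo)\to 0$ is split exact; tensoring with $\ko$ shows $\rho_*$ is a split epimorphism on $H_1(-;\ko)$, with kernel a direct summand of $H_1(T^\ld;\ko)$ of rank $\ld-\dim H_1(G;\ko)=|M|$. An algebra homomorphism of exterior algebras that is onto in degree one is onto, so $\beta$ is onto. For injectivity, write $N=i_M(H_1(T^M;\ko))\subseteq\ker\rho_*$. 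By $\sta{\ko}$ the submodule $N$ is a direct summand of $H_1(T^\ld;\ko)$ of rank $\dim H_1(T^M;\ko)=|M|$; and over a PID (here $\ko$ is a field or $\Zo$) a direct summand contained in a direct summand of the same rank must coincide with it, so $N=\ker\rho_*$ in degree one.

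Now choose a complement $H_1(T^\ld;\ko)=N\oplus C$. Then $\Lambda\cong\Lambda_\ko[N]\otimes\Lambda_\ko[C]$, the ideal generated by $N$ is the augmentation ideal of $\Lambda_\ko[N]$ times $\Lambda_\ko[C]$, and hence $\Lambda/\I_M\cong\Lambda_\ko[C]$; under this identification together with $H_*(G)\cong\Lambda_\ko[H_1(G;\ko)]$, the map $\beta$ is the exterior-algebra map induced by $\rho_*|_C\colon C\to H_1(G;\ko)$. The latter is injective since $C\cap N=0$ and surjective since $\rho_*(N\oplus C)=H_1(G;\ko)$, so it is an isomorphism; therefore $\beta$ is an isomorphism. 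The one genuinely substantial step is extracting $\ker\rho_*=N$ in degree one from $\sta{\ko}$; the rest is formal bookkeeping with exterior algebras.
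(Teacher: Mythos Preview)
Your argument is correct and follows essentially the same route as the paper's proof: factor $\rho_*$ through $q$ using that the ideal $\I_M$ is killed, then use the $\sta{\ko}$-splitting in degree one together with the exterior-algebra structure to conclude that $\beta$ is an isomorphism. The only real difference is packaging: the paper phrases the last step as constructing an algebra section $\widetilde{\alpha}\colon H_*(G)\to\Lambda$ of $\rho_*$ from a degree-one section $\alpha$, whereas you choose a complement $C$ and identify $\Lambda/\I_M\cong\Lambda_\ko[C]$ directly; these are two sides of the same coin. If anything, you are slightly more explicit than the paper in justifying why $N=(i_M)_*H_1(T^M;\ko)$ actually equals $\ker\rho_*$ in degree one (the paper simply asserts the exactness of $0\to H_1(T^M)\to H_1(T^\ld)\to H_1(G)\to 0$), and your rank-comparison argument for direct summands over a PID is the clean way to see it.
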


\begin{proof}
We have $\rho_*\colon H_*(T^\ld)\to H_*(G)\cong
\Lambda^*[H_1(G)]$. Map $\rho$ is $T^\ld$-equivariant, thus
$\rho_*$ is a map of $\Lambda$-modules. Since
$\rho_*((i_M)_*H_1(T^M))=0$, we have $\rho_*(\I_M)=0$, thus
$\rho_*$ factors through the quotient module, $\rho_*=\beta\circ
q$. Since $\rho_*$ is surjective so is $\beta$. By
$\sta{\ko}$-condition we have a split exact sequence
\[
0\rightarrow H_1(T^M)\rightarrow H_1(T^\ld)\rightarrow
H_1(G)\rightarrow 0,
\]
So far there is a section $\alpha\colon H_1(G)\to H_1(T^\ld)$ of
the map $\rho_*$ in degree $1$. This section extends to
$\widetilde{\alpha}\colon H_*(G)=\Lambda^*[H_1(G)]\to \Lambda$,
which is a section of $\rho_*$. Thus $\beta$ is injective.
\end{proof}
\end{con}

\subsection{Principal torus bundles}

Let $\rho\colon Y\to Q$ be a principal $T^\ld$-bundle over a
simple pseudo-cell complex $Q$.

\begin{lemma}\label{lemmaCMtrivialis}
If $Q$ is Cohen--Macaulay, then $Y$ is trivial. More precisely,
there exists an isomorphism $\xi$:
\[
\xymatrix{Y\ar@{->}[rr]^{\xi}\ar@{->}[rd]^{\rho}&&Q\times T^\ld\ar@{->}[ld]\\
&Q&}
\]
The induced isomorphism $\xi_*$ identifies $H_*(Y,\dd Y)$ with
$H_*(Q,\dd Q)\otimes \Lambda$ and $H_*(Y)$ with $\Lambda$.
\end{lemma}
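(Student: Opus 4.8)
The statement to prove is Lemma~\ref{lemmaCMtrivialis}: a principal $T^\ld$-bundle $\rho\colon Y\to Q$ over a Cohen--Macaulay simple pseudo-cell complex $Q$ is trivial, and the trivialization identifies $H_*(Y,\dd Y)$ with $H_*(Q,\dd Q)\otimes\Lambda$ and $H_*(Y)$ with $\Lambda$. My plan is to build the section $s\colon Q\to Y$ (equivalently the isomorphism $\xi$) by induction over the pseudo-cell skeleta $Q_0\subset Q_1\subset\cdots\subset Q_n=Q$, using the obstruction-theoretic fact that principal $T^\ld$-bundles over a space $Z$ are classified by $[Z,BT^\ld]=H^2(Z;\Zo^\ld)$. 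The key input is that the Cohen--Macaulay hypothesis forces $H^2(Q;\ko)=0$ when $\ko=\Zo$: indeed by Definition~\ref{defBuchsCell} the structure sheaf $\hh_q$ vanishes for $q\neq 0$, so the spectral sequence $\Eq^r_{*,*}$ collapses at the second page with $H^{n-1-p}(S_Q;\hh_0)\cong\Eq^2_{p,0}\Rightarrow H_p(Q)$, and since every proper face $F_I$ is acyclic over $\Zo$, the reasoning around \eqref{eqCollapsingAtCMFace} shows $H_*(Q)$ is concentrated in degree $0$ (i.e. $Q$ is acyclic over $\Zo$) whenever $Q$ itself satisfies the acyclicity condition of the Cohen--Macaulay case. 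Hence $H^2(Q;\Zo)=0$ and the classifying map of $Y$ is null-homotopic, giving the trivialization $\xi$.

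First I would make the acyclicity of $Q$ precise. In the Cohen--Macaulay case, by Definition~\ref{defBuchsCell} the sheaves $\hh_q$ vanish for $q\ne 0$, so $\Eq^2_{p,q}=0$ for $q\neq 0$ and $\Eq^2_{p,0}\cong H^{n-1-p}(S_Q;\hh_0)$. Cohen--Macaulayness of $S_Q$ (Corollary~\ref{corQbuchSbuch}) together with the acyclicity of $Q=F_\varnothing$ pins down $H^*(S_Q;\hh_0)$: arguing exactly as in \eqref{eqCollapsingAtCMFace} applied to $I=\varnothing$, one gets $\Eq^2_{p,0}$ nonzero only for $p=n$, where it is $\ko$. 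Thus the spectral sequence degenerates, $H_*(Q;\ko)=\ko$ concentrated in degree $0$, and over $\ko=\Zo$ this gives $\Hr_*(Q;\Zo)=0$, hence $H^2(Q;\Zo)=0$ by the universal coefficient theorem. (Alternatively one can run the argument cell by cell: each $F_I$ is acyclic over $\Zo$ by hypothesis, and an inductive Mayer--Vietoris/skeletal argument glues these to acyclicity of $Q$.)

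Next, with $H^2(Q;\Zo^\ld)=0$, the classifying map $Q\to BT^\ld\simeq (\Co P^\infty)^{\ld}$ is null-homotopic, so $Y\cong Q\times T^\ld$ as principal $T^\ld$-bundles; this produces $\xi$ and the commuting triangle with $\rho$. For the homological statements I would then use the Künneth formula: $H_*(Q\times T^\ld)\cong H_*(Q)\otimes H_*(T^\ld)\cong \ko\otimes\Lambda=\Lambda$ since $H_*(Q)=\ko$, and $H_*(Q\times T^\ld,\dd Q\times T^\ld)\cong H_*(Q,\dd Q)\otimes H_*(T^\ld)=H_*(Q,\dd Q)\otimes\Lambda$ (here $\dd Y=\rho^{-1}(\dd Q)$ corresponds under $\xi$ to $\dd Q\times T^\ld$, since $\xi$ is fiberwise over $Q$); both identifications are as $\Lambda$-modules because $\xi$ is $T^\ld$-equivariant.

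The main obstacle is the first step: one must be careful that the Cohen--Macaulay hypothesis as stated in Definition~\ref{defBuchsCell} really delivers integral acyclicity of $Q$, not merely $\ko$-acyclicity for the ground ring $\ko$, because triviality of the $T^\ld$-bundle is a statement about $H^2(Q;\Zo)$ regardless of $\ko$. But condition~(1) of Definition~\ref{defBuchsCell} already requires each face (including $F_\varnothing=Q$ in the Cohen--Macaulay case) to be acyclic \emph{over $\Zo$}, so this is built in; the role of condition~(2) and the sheaf-theoretic machinery is only needed to control the \emph{faces}, which here enter through the skeletal gluing. A secondary subtlety is that a pseudo-cell complex is not literally a CW complex, so classical obstruction theory applies only after replacing $Q$ by its CW structure (e.g. via the barycentric-type subdivision implicit in the pseudo-cell attaching maps), or—more cleanly—by invoking that principal $T^\ld$-bundles over any paracompact space $Z$ are classified by $\check H^2(Z;\Zo^\ld)$, which vanishes since $Z=Q$ is acyclic. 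Everything after that is the routine Künneth bookkeeping sketched above.
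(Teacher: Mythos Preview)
Your approach is correct and matches the paper's: the bundle is trivial because principal $T^\ld$-bundles are classified by $H^2(Q;\Zo^\ld)$, which vanishes since $Q$ is acyclic over $\Zo$, and the homological statements then follow from K\"unneth. Note, however, that the spectral-sequence detour you take to establish acyclicity of $Q$ is unnecessary (and slightly off: you write $p=n$ where you mean $p=0$, and the argument as written is circular, since \eqref{eqCollapsingAtCMFace} already \emph{uses} acyclicity of $F_I$); as you yourself observe at the end, condition~(1) of Definition~\ref{defBuchsCell} for $I=\varnothing$ gives $\Hr_*(Q;\Zo)=0$ directly, and that is all the paper uses.
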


\begin{proof}
Principal $T^\ld$-bundles are classified by their Euler classes,
sitting in $H^2(Q; \Zo^\ld)=0$ (recall that $Q$ is acyclic over
$\Zo$). The second statement follows from the K\"{u}nneth
isomorphism.
\end{proof}

For a general principal $T^\ld$-bundle $\rho\colon Y\to Q$
consider the filtration
\begin{equation}\label{eqFiltrationY}
\varnothing=Y_{-1}\subset Y_0\subset Y_1\subset\ldots\subset
Y_{n-1}\subset Y_n=Y,
\end{equation}
where $Y_i=\rho^{-1}(Q_i)$. For each $I\in S_Q$ consider the
subsets $Y_I=\rho^{-1}(F_I)$ and $\dd Y_I=\rho^{-1}(\dd F_I)$. In
particular, $Y_{\varnothing}=Y$, $\dd Y=Y_{n-1}$.

Let $\Ey^*_{*,*}$ be the spectral sequence associated with
filtration \eqref{eqFiltrationY}, i.e.:
\[
\Ey^1_{p,q}\cong H_{p+q}(Y_p,Y_{p-1}) \Rightarrow
H_{p+q}(Y),\qquad \dy^r\colon \Ey^r_{*,*}\to\Ey^r_{*-r,*+r-1}
\]
and $H_{p+q}(Y_p,Y_{p-1})\cong \bigoplus_{|I|=n-p}H_{p+q}(Y_I,\dd
Y_I)$.

Similar to construction \ref{conSheafOnQ} we define the sheaf
$\hh_q^Y$ on $S_Q$ by setting
\begin{equation}\label{eqHhYsheafDef}
\hh_q^Y(I) = H_{q+n-|I|}(Y_I,\dd Y_I).
\end{equation}
The restriction maps coincide with the differential $\dy^1$ up to
incidence signs. Note that $\hh^Y\eqd\bigoplus_q \hh_q^Y$ has a
natural $\Lambda$-module structure induced by the torus action.
The cochain complex of $\hh^Y$ coincides with the first page of
$\Ey^*_{*,*}$ up to change of indices. As before, consider also
the truncated spectral sequence:
\[
\Eyt^1_{p,q}\cong H_{p+q}(Y_p,Y_{p-1}), p<n \Rightarrow
H_{p+q}(\dd Y),
\]
and the truncated sheaf: $\hht^Y_q(\varnothing)=0$,
$\hht^Y_q(I)=\hht^Y(I)$ for $I\neq\varnothing$.

\begin{lemma}\label{lemmaLaDefin}
If $Q$ is Buchsbaum, then $\hht^Y_q\cong \hht_0\otimes \La^{(q)}$,
where $\La^{(q)}$ is a locally constant sheaf on $S_Q$ valued by
$\Lambda^{(q)}$.
\end{lemma}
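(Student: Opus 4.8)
The plan is to establish the isomorphism $\hht^Y_q\cong\hht_0\otimes\La^{(q)}$ by analyzing each pair $(Y_I,\dd Y_I)$ for $\varnothing\neq I\in S_Q$ locally, then checking that the local isomorphisms are compatible with restriction maps. First I would note that since $Q$ is Buchsbaum, each proper face $F_I$ ($I\neq\varnothing$) is Cohen--Macaulay and in particular acyclic over $\Zo$. By Lemma \ref{lemmaCMtrivialis} applied to the restricted principal bundle $Y_I=\rho^{-1}(F_I)\to F_I$, this bundle is trivial: $Y_I\cong F_I\times T^\ld$, compatibly $\dd Y_I\cong \dd F_I\times T^\ld$. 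The K\"unneth theorem then gives
\[
H_{q+n-|I|}(Y_I,\dd Y_I)\cong \bigoplus_{a+b=q+n-|I|}H_a(F_I,\dd F_I)\otimes H_b(T^\ld).
\]
Since $Q$ is Buchsbaum, $F_I$ is a Cohen--Macaulay simple pseudo-cell complex of dimension $n-|I|$, so $H_a(F_I,\dd F_I)=0$ unless $a=\dim F_I=n-|I|$, and in that degree $H_{n-|I|}(F_I,\dd F_I)=\hh_0(I)=\hht_0(I)$. Hence only the term $a=n-|I|$, $b=q$ survives, yielding $\hht^Y_q(I)\cong \hht_0(I)\otimes \Lambda^{(q)}$.

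Next I would promote these pointwise isomorphisms to an isomorphism of sheaves. The subtle point is that the trivializations $\xi_I\colon Y_I\xrightarrow{\cong}F_I\times T^\ld$ produced by Lemma \ref{lemmaCMtrivialis} are a priori unrelated for different faces, so the K\"unneth decompositions need not match up under the restriction maps $\hht^Y_q(I<_1J)$. To fix this, I would choose the trivializations coherently: working down the skeleton filtration, first trivialize $Y$ over the top face and then, inductively, arrange that the trivialization over each $Y_I$ restricts to the already-chosen trivialization over $\dd Y_I$ (which is possible because the obstruction to extending a trivialization of $\dd Y_I$ over $Y_I$ lies in $H^1(F_I,\dd F_I;\Zo^\ld)=0$, using that $(F_I,\dd F_I)$ has homology concentrated in top degree and $F_I$ is acyclic — more directly, since $F_I$ is contractible-up-to-homology and simply connected considerations give triviality). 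With compatible trivializations, the restriction map $\hh^Y_q(I<_1J)$ becomes, under the K\"unneth identification, exactly $m^q_{I,J}\otimes\id_{\Lambda^{(q)}}$ (tensored with incidence signs), i.e. $\hh_0(I<_1J)\otimes\id$. This is precisely the statement that $\hht^Y_q\cong\hht_0\otimes\La^{(q)}$ with $\La^{(q)}$ the locally constant sheaf with fiber $\Lambda^{(q)}$ and identity restriction maps; the $\Lambda$-module structure on $\hh^Y$ is the obvious one coming from the $T^\ld$-factor.

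The main obstacle is precisely this coherence step: ensuring the local trivializations glue along boundaries so that the abstract K\"unneth isomorphisms assemble into a morphism of sheaves rather than merely a pointwise collection of isomorphisms. One clean way to organize this is to avoid choosing trivializations altogether and instead argue functorially: the sheaf $\hht^Y_q$ can be described as $\underline{H}_q$ of the sheaf of ``fiberwise chains'', and over the acyclic region $S_Q\setminus\{\varnothing\}$ the fibration $Y\to Q$ restricted to each $F_I$ is fiber-homotopy-trivial in a way natural in $I$ (since $F_I\hookrightarrow F_J$ induces compatible trivializations by naturality of the vanishing obstruction class). Then the Leray--Hirsch / K\"unneth splitting over each face is induced by a global class in $H^*(Y;\ko)$ pulled back from $H^*(BT^\ld)$-free generators, and naturality of this splitting under inclusions of faces is automatic. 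I would spell out whichever of these two routes (explicit coherent trivialization, or naturality of the splitting) is shorter given the conventions already fixed, and then conclude that the componentwise tensor product identification holds together with the stated $\Lambda$-module structure.
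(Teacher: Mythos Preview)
Your pointwise K\"unneth computation is correct, and you rightly identify the coherence of trivializations as the main issue. However, your proposed resolution overshoots the target: you are trying to arrange that the trivializations $\xi_I$ agree on overlaps, so that $\La^{(q)}$ has \emph{identity} restriction maps. That would make $\La^{(q)}$ the globally constant sheaf, which is strictly stronger than what the lemma asserts (it only claims $\La^{(q)}$ is \emph{locally} constant). In fact your stronger statement can fail: a coherent family of trivializations $\{\xi_I\}_{I\neq\varnothing}$ glues to a global trivialization of the restricted bundle $Y|_{\dd Q}\to\dd Q$, and that bundle is classified by a class in $H^2(\dd Q;\Zo^\ld)$ which need not vanish in the Buchsbaum setting (only the proper faces are assumed acyclic, not $\dd Q$ itself). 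Your obstruction argument using $H^1(F_I,\dd F_I;\Zo^\ld)$ is for extending a section of the gauge bundle, but the genuine obstruction lives in $H^2(F_I,\dd F_I;\Zo^\ld)$, and for $2$-dimensional faces this is exactly $\hh_0(I)\otimes\Zo^\ld$, which does not vanish.

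The paper's approach avoids this trap by not forcing coherence at all. One simply chooses an arbitrary trivialization $\xi_I$ over each proper face, and then for $I<J$ compares the two resulting trivializations of $Y_J$: the restriction $\xi_I|_{Y_J}$ and $\xi_J$. Their difference is a gauge transformation whose effect on $H_q$ is an isomorphism $\Lambda^{(q)}\to\Lambda^{(q)}$; these isomorphisms are \emph{defined} to be the restriction maps of $\La^{(q)}$. With this definition $\hht^Y_q\cong\hht_0\otimes\La^{(q)}$ holds tautologically, and $\La^{(q)}$ is locally constant because each restriction map is an isomorphism. The paper's remark immediately after the lemma confirms this reading: only in the special case $Y=Q\times T^\ld$ is $\La$ asserted to be globally trivial.
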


\begin{proof}
All proper faces of $Q$ are Cohen--Macaulay, thus lemma
\ref{lemmaCMtrivialis} applies. We have $H_q(Y_I,\dd Y_I) \cong
H_0(F_I,\dd F_I)\otimes \Lambda^{(q)}$. For any $I<J$ there are
two trivializations of $Y_J$: the restriction of $\xi_I$, and
$\xi_J$ itself:
\[
\xymatrix{
&Y_J\ar@{^{(}->}[r]\ar@{->}[dl]_{\xi_J}\ar@{->}[d]^{\xi_I|_{Y_J}}&
Y_I\ar@{->}[d]^{\xi_I} \\
F_J\times T^\ld\ar@{..>}[r]& F_J\times
T^\ld\ar@{^{(}->}[r]&F_I\times T^\ld}
\]
Transition maps $\xi_I|_{Y_J}\circ(\xi_J)^{-1}$ induce the
isomorphisms in homology $\Lambda^{(q)}=H_q(F_J\times T^\ld)\to
H_q(F_J\times T^\ld)=\Lambda^{(q)}$ which determine the
restriction maps $\La^{(q)}(I\subset J)$. The locally constant
sheaf $\La^{(q)}$ is thus defined, and the statement follows.
\end{proof}

Denote $\La=\bigoplus_q\La^{(q)}$ --- the graded sheaf on $S_Q$
valued by $\Lambda=\bigoplus_q\Lambda^{(q)}$

\begin{rem}
Our main example is the trivial bundle: $Y=Q\times T^\ld$. In this
case the whole spectral sequence $\Ey^*_{*,*}$ is isomorphic to
$\Eq^*_{*,*}\otimes \Lambda$. For the structure sheaves we also
have $\hh^Y_*=\hh_*\otimes\Lambda^*$. In particular the sheaf
$\La$ constructed in lemma \ref{lemmaLaDefin} is globally trivial.
By results of subsection \ref{subsecSpecSeqQ}, all terms and
differentials of $\Ey^*_{*,*}$ are described explicitly.
Nevertheless, several results of this paper remain valid in a
general setting, thus are stated in full generality where it is
possible.
\end{rem}

\begin{rem}
This construction is very similar to the construction of the sheaf
of local fibers which appears in the Leray--Serre spectral
sequence. But contrary to this general situation, here we
construct not just a sheaf in a common topological sense, but a
cellular sheaf supported on the given simplicial poset $S_Q$. Thus
we prefer to provide all the details, even if they seem obvious to
the specialists.
\end{rem}

\subsection{Torus spaces over simple pseudo-cell complexes}\label{subsecBT}

Recall, that $\Tt_\ld$ denotes the set of all 1-dimensional toric
subgroups of $T^\ld$. Let $Q$ be a simple pseudo-cell complex of
dimension $n$, $S_Q$ --- its underlying simplicial poset and
$\rho\colon Y\to Q$ --- a principal $T^\ld$-bundle. There exists a
general definition of a characteristic pair in the case of
manifolds with locally standard actions, see \cite[Def.4.2]{Yo}.
We do not review this definition here due to its complexity, but
prefer to work in Buchsbaum setting, in which case many things
simplify. If $Q$ is Buchsbaum, then its proper faces $F_I$ are
Cohen--Macaulay, and according to lemma \ref{lemmaCMtrivialis},
there exist trivializations $\xi_I$ which identify orbits over
$x\in F_I$ with $T^\ld$. If $x$ belongs to several faces, then
different trivializations give rise to the transition
homeomorphisms $\tr_{I<J}\colon T^\ld\to T^\ld$, and at the global
level some nontrivial twisting may occur. To give the definition
of characteristic map, we need to distinguish between these
different trivializations. Denote by $T^\ld(I)$ the torus sitting
over the face $F_I$ (via trivialization of lemma
\ref{lemmaCMtrivialis}) and let $\Tt_\ld(I)$ be the set of
1-dimensional subtori of $T^\ld(I)$. The map $\tr_{I<J}$ sends
elements of $\Tt_\ld(I)$ to $\Tt_\ld(J)$ in an obvious way. One
can think of $\Tt_\ld(-)$ as a locally constant sheaf of sets on
$S_Q\setminus\{\varnothing\}$.

\begin{defin}
A characteristic map $\lambda$ is a collection of elements
$\lambda(i)\in\Tt_\ld(i)$ defined for each vertex $i\in
\ver(S_Q)$. This collection should satisfy the following
condition: for any simplex $I\in S_Q$, $I\neq\varnothing$ with
vertices $i_1,\ldots,i_k$ the set
\begin{equation}\label{eqCollectInCharFun}
\{\tr_{i_1<I}\lambda(i_1),\ldots,\tr_{i_k<I}\lambda(i_k)\}
\end{equation}
satisfies $\sta{\ko}$ condition in $T^\ld(I)$.
\end{defin}

Clearly, a characteristic map exists only if $\ld\geqslant n$. Let
$T^{\lambda(I)}$ denote the subtorus of $T^\ld(I)$ generated by
$1$-dimensional subgroups \eqref{eqCollectInCharFun}.

\begin{con}[Quotient construction]
Consider the identification space:
\begin{equation}\label{eqDefX}
X = Y/\simc,
\end{equation}
where $y_1\sim y_2$ if $\rho(y_1)=\rho(y_2)\in F_I\oo$ for some
$\varnothing\neq I\in S_Q$, and $y_1,y_2$ lie in the same
$T^{\lambda(I)}$-orbit.

There is a natural action of $T^\ld$ on $X$ coming from $Y$. The
map $\mu\colon X\to Q$ is a projection to the orbit space
$X/T^\ld\cong Q$. The orbit $\mu^{-1}(b)$ over the point $b\in
F\oo_I\subset \dd Q$ is identified (via the trivializing
homeomorphism) with $T^\ld(I)/T^{\lambda(I)}$. This orbit has
dimension $\ld-\dim T^{\lambda(I)}=\ld-|I|=\dim F_I+(\ld-n)$. The
preimages of points $b\in Q\setminus \dd Q$ are the
full-dimensional orbits.

Filtration \eqref{eqFiltrationY} descends to the filtration on
$X$:
\begin{equation}\label{eqFiltrationX}
\varnothing=X_{-1}\subset X_0\subset X_1\subset\ldots\subset
X_{n-1}\subset X_n=X,
\end{equation}
where $X_i=Y_i/\simc$ for $i\leqslant n$. In other words, $X_i$ is
the union of $(\leqslant i+\ld-n)$-dimensional orbits of the
$T^\ld$-action. Thus $\dim X_i = 2i+\ld-n$ for $i\leqslant n$.
\end{con}

Let $\Ex^*_{*,*}$ be the spectral sequence associated with
filtration \eqref{eqFiltrationX}:
\[
\Ex^1_{p,q}=H_{p+q}(X_p,X_{p-1})\Rightarrow H_{p+q}(X),\qquad
\dx\colon \Ex^r_{*,*}\to\Ex^r_{*-r,*+r-1}.
\]
The quotient map $f\colon Y \to X$ induces a morphism of spectral
sequences $f_*^r\colon \Ey^r_{*,*}\to \Ex^r_{*,*}$, which is a
$\Lambda$-module homomorphism for each $r\geqslant 1$.

\subsection{Structure of $\Ex^1_{*,*}$.}\label{subsecStructOfEX1}

For each $I\in S_Q$ consider the subsets $X_I=Y_I/\simc$ and $\dd
X_I=\dd Y_I/\simc$. As before, define the family of sheaves
associated with filtration \ref{eqFiltrationX}:
\[
\hh^X_q(I) = H_{q+n-|I|}(X_I,\dd X_I),
\]
with the restriction maps equal to $\dy^1$ up to incidence signs.
These sheaves can be considered as a single sheaf $\hh^X$ graded
by $q$. We have
$(\Ex^1_{*,q},\dx)\cong(C^{n-1-*}(S_Q,\hh_q^X),d)$. There are
natural morphisms of sheaves $f_*\colon\hh^Y_q\to\hh^X_q$ induced
by the quotient map $f\colon Y\to X$, and the corresponding map of
cochain complexes coincides with $f_*^1\colon
\Ey^1_{*,q}\to\Ex^1_{*,q}$. Also consider the truncated versions:
$\hht^X=\bigoplus_q\hht^X_q$ for which $\hht^X(\varnothing)=0$.

\begin{rem}\label{remXYsameForPeqN}
The map $f_*^1\colon H_*(Y,\dd Y) \to H_*(X,\dd X)$ is an
isomorphism by excision since $X/\dd X\cong Y/\dd Y$.
\end{rem}

Now we describe the truncated part of the sheaf $\hh^Y$ in
algebraical terms. Let $I\in S_Q$ be a simplex and $i\leqslant I$
its vertex. Consider the element of exterior algebra
$\omega_{i}\in \La(I)^{(1)}\cong \Lambda^{(1)}$ which is the image
of the fundamental cycle of $\lambda(i)\cong T^1$ under the
transition map $\tr_{i\leqslant I}$:
\begin{equation}\label{eqDefOmegaForm}
\omega_i=\left(\tr_{i\leqslant I}\right)_*[\lambda(i)]\in
\La(I)^{(1)}
\end{equation}
Consider the subsheaf $\I$ of $\La$ whose value on a simplex $I$
with vertices $\{i_1,\ldots,i_k\}\neq\varnothing$ is:
\begin{equation}\label{eqDefIdealSheaf}
\I(I)=(\omega_{i_1},\ldots,\omega_{i_k})\subset \La(I),
\end{equation}
--- the ideal of the exterior algebra $\La(I)\cong \Lambda$
generated by linear forms. Also set $\I(\varnothing)=0$. It is
easily checked that $\La(I<J)\I(I)\subset\I(J)$, so $\I$ is a
well-defined subsheaf of $\La$.

\begin{lemma}\label{lemmaTruncExtDescr}
The map of sheaves $f_*\colon\hht^Y_q\to\hht^X_q$ is isomorphic to
the quotient map of sheaves $\hh_0\otimes
\La^{(q)}\to\hh_0\otimes(\La/\I)^{(q)}$.
\end{lemma}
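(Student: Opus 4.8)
The plan is to compute both $\hht^Y_q$ and $\hht^X_q$ locally, on each face $F_I$ with $I\neq\varnothing$, and then check that the identification is compatible with restriction maps. By Lemma~\ref{lemmaLaDefin} we already know $\hht^Y_q\cong\hht_0\otimes\La^{(q)}$; the content is to identify the effect of the quotient map $f$ on stalks and to show it corresponds exactly to killing the subsheaf $\I$. So first I would fix $\varnothing\neq I\in S_Q$ with vertices $i_1,\dots,i_k$ (so $k=|I|$) and analyze the local model. Over $F_I\oo$ the quotient $X$ is obtained from $Y|_{F_I}\cong F_I\times T^\ld$ by collapsing each $T^{\lambda(I)}$-orbit, where $T^{\lambda(I)}$ is generated by the $1$-dimensional subgroups $\tr_{i_j\leqslant I}\lambda(i_j)$. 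Hence, using the trivialization $\xi_I$, one has an identification of pairs $(X_I,\dd X_I)$ with the Borel-type quotient of $(F_I\times T^\ld, \dd F_I\times T^\ld)$ by the free $T^{\lambda(I)}$-action on the torus factor; this is a $(T^\ld/T^{\lambda(I)})$-bundle pair over $(F_I,\dd F_I)$.

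The key homological computation is then a K\"unneth/bundle argument: since $F_I$ is acyclic and Cohen--Macaulay, $H_*(F_I,\dd F_I)$ is concentrated in top degree and $H_{q+\dim F_I}(X_I,\dd X_I)\cong H_0(F_I,\dd F_I)\otimes H_q(T^\ld/T^{\lambda(I)})$, exactly as in Lemma~\ref{lemmaCMtrivialis} but with $T^\ld$ replaced by the quotient torus. Now apply Lemma~\ref{lemmaIdealQuotTorus} with $\ld$-torus $T^\ld(I)$ and the subgroup collection $M=\{\tr_{i_j\leqslant I}\lambda(i_j)\}$: it gives a canonical isomorphism $H_*(T^\ld/T^{\lambda(I)})\cong\Lambda/\I_M$, and by the very definition \eqref{eqDefOmegaForm}--\eqref{eqDefIdealSheaf} the ideal $\I_M$ is precisely $\I(I)=(\omega_{i_1},\dots,\omega_{i_k})$. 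Tensoring with $H_0(F_I,\dd F_I)\cong\hh_0(I)$ yields $\hht^X_q(I)\cong(\hh_0\otimes(\La/\I)^{(q)})(I)$, and the natural map $f_*$ corresponds under these identifications to the quotient projection $\Lambda\to\Lambda/\I_M$ tensored with the identity on $\hh_0(I)$, by naturality of K\"unneth and of Lemma~\ref{lemmaIdealQuotTorus} with respect to $\rho_*$.

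It remains to check compatibility with restriction maps $I<J$, i.e.\ that the square relating $\hht^X_q(I)\to\hht^X_q(J)$ to $(\hh_0\otimes(\La/\I)^{(q)})(I)\to(\hh_0\otimes(\La/\I)^{(q)})(J)$ commutes. Here I would use the same double-trivialization diagram as in the proof of Lemma~\ref{lemmaLaDefin}: the transition map $\tr_{I<J}$ carries $T^{\lambda(I)}$ onto $T^{\lambda(J)}$ (it sends each $\tr_{i\leqslant I}\lambda(i)$ to $\tr_{i\leqslant J}\lambda(i)$ for the vertices $i$ of $I$, which are among those of $J$), hence it descends to a map of quotient tori $T^\ld(I)/T^{\lambda(I)}\to T^\ld(J)/T^{\lambda(J)}$ compatible with the bundle excision maps $m^q_{I,J}$ defining the restriction of $\hh^X_q$. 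Under the isomorphism of Lemma~\ref{lemmaIdealQuotTorus} this descended map becomes exactly the quotient-sheaf restriction $(\La/\I)(I<J)$ induced from $\La(I<J)$, because $\La(I<J)$ was defined (Lemma~\ref{lemmaLaDefin}) from precisely these transition maps and it carries $\I(I)$ into $\I(J)$. Thus the isomorphisms assemble into a morphism of sheaves, and since the linking data of $\hht^Y_q$ already matches $\hh_0\otimes\La^{(q)}$, the whole square of sheaf morphisms commutes.

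The main obstacle I anticipate is the bookkeeping in the compatibility step: one must make sure that the canonical isomorphism supplied by Lemma~\ref{lemmaIdealQuotTorus} is genuinely natural in the torus, i.e.\ that a surjection $T^\ld(I)\to T^\ld(J)$ carrying one subgroup collection to the other induces a commuting square of the $\Lambda/\I_M$-identifications. This is the point where the sign convention and the choice of sections $\alpha$ in Lemma~\ref{lemmaIdealQuotTorus} could in principle interfere; but since $\rho_*$ itself (the honest quotient map on tori) is manifestly functorial and $\beta$ is characterized by $\rho_*=\beta\circ q$, the identification is canonical and no choices survive. Everything else --- the local K\"unneth computation, the identification $\I_M=\I(I)$, the acyclicity input --- is routine given the lemmas already proved.
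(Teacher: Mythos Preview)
Your approach is essentially the same as the paper's: trivialize over each proper face via Lemma~\ref{lemmaCMtrivialis}, compute the relative homology of the quotient using K\"unneth, and invoke Lemma~\ref{lemmaIdealQuotTorus} to identify $H_*(T^\ld/T^{\lambda(I)})$ with $\Lambda/\I(I)$. You are in fact more careful than the paper about the compatibility with restriction maps, which the paper subsumes under ``the rest follows.''

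One small inaccuracy worth fixing: the pair $(X_I,\dd X_I)$ is \emph{not} literally the bundle pair $(F_I,\dd F_I)\times T^\ld/T^{\lambda(I)}$, because over $\dd F_I$ the identification $\simc$ collapses the larger subtori $T^{\lambda(J)}$ for $J>I$, not just $T^{\lambda(I)}$. What is true is that $X_I/\dd X_I\cong (F_I\times T^\ld/T^{\lambda(I)})/(\dd F_I\times T^\ld/T^{\lambda(I)})$, so the relative homologies agree by excision; this is exactly the step the paper makes explicit. Your ``Hence'' after the description over $F_I\oo$ skips this excision, but once you insert it the argument is complete.
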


\begin{proof}
By lemma \ref{lemmaCMtrivialis}, $(Y_I,\dd Y_I)\to (F_I,\dd F_I)$
is equivalent to the trivial $T^\ld$-bundle $\xi_I\colon(Y_I,\dd
Y_I)\cong(F_I,\dd F_I)\times T^\ld(I)$. By construction of $X$, we
have identifications
\[
\xi_I'\colon(X_I,\dd X_I)\cong \left[(F_I,\dd F_I)\times
T^\ld(I)\right]/\simc.
\]
By excision, the group $H_*([F_I\times T^\ld(I)]/\simc,[\dd
F_I\times T^\ld(I)]/\simc)$ coincides with
\[
H_*(F_I\times T^\ld(I)/T^{\lambda(I)},\dd F_I\times
T^\ld(I)/T^{\lambda(I)})= H_*(F_I,\dd F_I)\otimes
H_*(T^\ld(I)/T^{\lambda(I)}).
\]
The rest follows from lemma \ref{lemmaIdealQuotTorus}.
\end{proof}

There is a short exact sequence of graded sheaves
\[
0\longrightarrow \I \longrightarrow \La \longrightarrow \La/\I
\longrightarrow 0
\]
Tensoring it with $\hh_0$ produces the short exact sequence
\[
0\longrightarrow \hh_0\otimes\I^{(q)} \longrightarrow \hht^Y_q
\longrightarrow \hht^X_q \longrightarrow 0
\]
according to lemma \ref{lemmaTruncExtDescr}. The sheaf
$\hh_0\otimes\I$ can also be considered as a subsheaf of
non-truncated sheaf $\hh^Y$.

\begin{lemma}\label{lemmaIshortNonTrunc}
There is a short exact sequence of graded sheaves
\[
0\rightarrow \hh_0\otimes\I\rightarrow \hh^Y \rightarrow \hh^X
\rightarrow 0.
\]
\end{lemma}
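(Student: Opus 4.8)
The statement to prove is Lemma~\ref{lemmaIshortNonTrunc}: the short exact sequence $0\to\hh_0\otimes\I\to\hh^Y\to\hh^X\to 0$ of (non-truncated) graded sheaves on $S_Q$. The plan is to reduce this to what has already been established one step earlier, namely the truncated version
\[
0\longrightarrow \hh_0\otimes\I^{(q)} \longrightarrow \hht^Y_q \longrightarrow \hht^X_q \longrightarrow 0,
\]
obtained by tensoring the short exact sequence of sheaves $0\to\I\to\La\to\La/\I\to 0$ with $\hh_0$ and invoking Lemma~\ref{lemmaTruncExtDescr}. Since a short exact sequence of sheaves on $S_Q$ is nothing but a short exact sequence of $\ko$-modules $0\to\A(I)\to\B(I)\to\Cc(I)\to 0$ for every $I\in S_Q$, compatible with all restriction maps, the only thing left to check is what happens on the single extra element $I=\varnothing$, where the truncated and non-truncated sheaves differ.

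First I would note that on every $I\neq\varnothing$ the three sheaves in the claimed sequence agree with their truncated counterparts by definition of $\hht$, $\hht^Y$, $\hht^X$ and of the subsheaf $\hh_0\otimes\I$ (which has value $0$ on $\varnothing$ by \eqref{eqDefIdealSheaf}); hence exactness at those stalks is already known. So it remains to analyze $I=\varnothing$. There $(\hh_0\otimes\I)(\varnothing)=0$, so exactness there amounts to the assertion that $f_*\colon\hh^Y(\varnothing)\to\hh^X(\varnothing)$ is an \emph{isomorphism}. But $\hh^Y(\varnothing)=H_{*}(Y_\varnothing,\dd Y_\varnothing)=H_*(Y,\dd Y)$ and $\hh^X(\varnothing)=H_*(X,\dd X)$, and by Remark~\ref{remXYsameForPeqN} the map $f_*^1\colon H_*(Y,\dd Y)\to H_*(X,\dd X)$ is an isomorphism by excision (the quotient by $\simc$ does not touch interior orbits, so $Y/\dd Y\cong X/\dd X$). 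This settles exactness of the sequence of graded $\ko$-modules at every element of $S_Q$.

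Second, I would check that the three maps in question are genuine morphisms of sheaves, i.e. commute with restriction maps, and that the sequence is compatible with the grading by $q$ and with the $\Lambda$-module structure. Both $f_*\colon\hh^Y\to\hh^X$ and the inclusion $\hh_0\otimes\I\hookrightarrow\hh^Y$ were already seen to be sheaf morphisms in the discussion preceding the lemma; the restriction maps of $\hh^Y$ are induced (up to incidence signs) by the differential $\dy^1$, those of $\hh^X$ by $\dx^1$, and $f_*$ intertwines them because $f\colon Y\to X$ is filtration-preserving. So compatibility with restriction maps at the new stalk $\varnothing$ follows from the naturality of the connecting/excision maps in the relative long exact sequences defining $m^q_{\varnothing,J}$, exactly as in Construction~\ref{conSheafOnQ}. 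The grading and $\Lambda$-module statements are automatic since all the maps involved come from equivariant maps of spaces.

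The proof therefore has no real obstacle; the only point requiring care is the stalk at $\varnothing$, and that is handled by Remark~\ref{remXYsameForPeqN}. Concretely I would write: \emph{The sequence restricted to $S_Q\setminus\{\varnothing\}$ is the truncated short exact sequence already established. At $\varnothing$ we have $(\hh_0\otimes\I)(\varnothing)=0$ and, by Remark~\ref{remXYsameForPeqN}, $f_*\colon\hh^Y(\varnothing)=H_*(Y,\dd Y)\xrightarrow{\ \cong\ }H_*(X,\dd X)=\hh^X(\varnothing)$, so the sequence is exact there as well. Compatibility with restriction maps holds on $S_Q\setminus\{\varnothing\}$ by the truncated case, and on the pairs $\varnothing<_1 J$ it follows from naturality of excision. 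Hence the displayed sequence is a short exact sequence of graded $\Lambda$-module sheaves.}
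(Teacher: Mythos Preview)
Your proof is correct and follows essentially the same approach as the paper: the paper packages the argument as a $3\times 3$ commutative diagram with exact columns (truncated on top, non-truncated in the middle, and the quotient sheaves $\hh^Y/\hht^Y\cong\hh^X/\hht^X$ concentrated at $\varnothing$ on the bottom), while you phrase the same content as a direct stalkwise check. Both arguments rest on the truncated short exact sequence together with Remark~\ref{remXYsameForPeqN}, so the difference is purely presentational.
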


\begin{proof}
Follows from the diagram
\[
\xymatrix{
&&0\ar@{->}[d]&0\ar@{->}[d]&\\
0\ar@{->}[r]&\hh_0\otimes\I\ar@{^{(}->}[r]\ar@{=}[d]&
\hht^Y\ar@{->>}[r]\ar@{^{(}->}[d]&\hht^X\ar@{->}[r]\ar@{^{(}->}[d]&0\\
0\ar@{->}[r]&\hh_0\otimes\I\ar@{^{(}->}[r]&\hh^Y\ar@{->>}[r]\ar@{->>}[d]&
\hh^X\ar@{->}[r]\ar@{->>}[d]&0\\
&&\hh^Y/\hht^Y\ar@{->}[r]^{\cong}\ar@{->}[d]&\hh^X/\hht^X\ar@{->}[d]&\\
&&0&0&
}
\]
The lower sheaves are concentrated in $\varnothing\in S_Q$ and the
graded isomorphism between them is due to remark
\ref{remXYsameForPeqN}.
\end{proof}

\subsection{Extra pages of $\Ey$ and
$\Ex$}\label{subsecExtraTermX}

To simplify further discussion we briefly sketch the formalism of
additional pages of spectral sequences $\Ey$ and $\Ex$, which
extends considerations of subsection \ref{subsecExtraTermQ}.
Consider the following bigraded module:
\[
\Ey^{1+}_{p,q}=\begin{cases} \Eyt_{p,q}^2, \mbox{ if }p<n;\\
\Ey^1_{n,q}, \mbox{ if }p=n.
\end{cases}
\]
and define the differentials $\dy^{1-}$ on $\Ey^1$ and $\dy^{1+}$
on $\Ey^{1+}$ by
\[
\dy^{1-}=\begin{cases} \dy^1, \mbox{ if }p<n;\\
0,\mbox{ if }p=n.
\end{cases}\qquad
\dy^{1+}=\begin{cases} 0, \mbox{ if }p<n;\\
\Ey^1_{n,q}\stackrel{\dy^1}{\longrightarrow}\Ey^1_{n-1,q}
\longrightarrow \Ey^2_{n-1,q}\mbox{ if }p=n.
\end{cases}
\]
It is easily checked that $\Ey^{1+}\cong H(\Ey^1,\dy^{1-})$ and
$\Ey^2\cong H(\Ey^{1+},\dy^{1+})$. The page $\Ex^{1+}$ and the
differentials $\dx^{1-}$, $\dx^{1+}$ are defined similarly. The
map $f_*^1\colon \Ey^1\to\Ex^1$ induces the map between the extra
pages: $f_*^{1+}\colon \Ey^{1+}\to \Ex^{1+}$.

\section{Main results}\label{secMainResults}

\subsection{Structure of $\Ex^r_{*,*}$}
The short exact sequence of lemma \ref{lemmaIshortNonTrunc}
generates the long exact sequence in sheaf cohomology:
\begin{equation}\label{eqLongExact}
\rightarrow H^{i-1}(S_Q;\hh_0\otimes\I^{(q)}) \rightarrow
H^{i-1}(S_Q;\hh^Y_q) \stackrel{f^2_*}{\longrightarrow}
H^{i-1}(S_Q;\hh^X_q) \longrightarrow
H^{i}(S_Q;\hh_0\otimes\I^{(q)})\rightarrow
\end{equation}

The following lemma is the cornerstone of the whole work.

\begin{lemma}[Key Lemma]\label{lemmaKeyLemma}
$H^i(S_Q;\hh_0\otimes\I^{(q)})=0$ if $i\leqslant n-1-q$.
\end{lemma}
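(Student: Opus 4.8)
The statement concerns the sheaf $\hh_0 \otimes \I^{(q)}$ on $S_Q$, where $\I \subset \La$ is the subsheaf whose stalk at a simplex $I$ with vertices $i_1,\dots,i_k$ is the ideal $(\omega_{i_1},\dots,\omega_{i_k})$ of the exterior algebra $\La(I)$. The plan is to filter $\I$ by the number of generating linear forms and reduce the vanishing to a statement about the subquotients, which should be identifiable with (twists of) the local sheaves $\ups{I}^{\ko}\otimes(\text{something})$ from Example~\ref{exSheafLocHomol}/\ref{exSheafLocTens}. Concretely, for a fixed $I \in S_Q$ with vertex set $V(I)$, the ideal $\I(I)$ in $\Lambda \cong \Lambda_\ko[\Lambda^{(1)}]$ generated by the $|I|$ linear forms $\omega_i$, $i \in V(I)$, has a well-understood structure: since the collection $\{\omega_i : i \in V(I)\}$ satisfies $\sta{\ko}$ (it is part of a characteristic map), these forms are part of a basis of $\Lambda^{(1)}$ up to splitting, so $\I(I)$ is, as a graded $\ko$-module, isomorphic to $\Lambda / (\text{exterior algebra on the complementary forms})$ shifted in degree; in particular its lowest nonzero graded piece $\I(I)^{(q)}$ is zero for $q < |I|$. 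That observation alone will kill many stalks.

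First I would make the degree bookkeeping precise: for each simplex $I$, $\I^{(q)}(I) = 0$ whenever $q < |I|$, because the ideal generated by $|I|$ independent linear forms in an exterior algebra starts in degree $|I|$. Hence the sheaf $\hh_0 \otimes \I^{(q)}$ is supported on the simplicial subposet $S_Q^{\leqslant q} = \{I \in S_Q : |I| \leqslant q\}$ (together with $\varnothing$, where $\I$ vanishes anyway). Then I would set up the cochain complex $C^*(S_Q; \hh_0 \otimes \I^{(q)})$ and observe that, because of this support, $C^i$ vanishes for $i \geqslant q$ — wait, that is not quite strong enough, since we want vanishing of cohomology up to degree $n-1-q$, which is a statement in the \emph{low} degrees. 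So the right move is different: I would instead filter $\I$ itself as a sheaf. Order the vertices of $S_Q$ as $v_1, v_2, \dots$; let $\I_{\leqslant j}$ be the subsheaf whose stalk at $I$ is the ideal generated by those $\omega_{v}$ with $v \leqslant I$ and $v \in \{v_1,\dots,v_j\}$. This gives an increasing filtration $0 = \I_{\leqslant 0} \subset \I_{\leqslant 1} \subset \cdots \subset \I$, and I would analyze the successive quotients $\I_{\leqslant j}/\I_{\leqslant j-1}$.

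The key computation is to identify each graded piece $\big(\I_{\leqslant j}/\I_{\leqslant j-1}\big)^{(q)}$ with a sheaf of the form $\ups{v_j}^{\ko} \otimes \mathcal{G}$ supported on $\lk_{S_Q} v_j$, where $\mathcal{G}$ is a locally constant sheaf valued in $(\Lambda / (\text{forms already used}))^{(q-1)}$ — essentially multiplication by $\omega_{v_j}$ is an isomorphism onto its image modulo the earlier ideal. Then by Example~\ref{exSheafLocTens}, $H^i(S_Q; \hh_0 \otimes \ups{v_j}^{\ko} \otimes \mathcal{G}) \cong H^{i-1}(\lk_{S_Q} v_j; (\hh_0 \otimes \mathcal{G})|_{\lk v_j})$. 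Now $\lk_{S_Q} v_j$ is the underlying poset of the facet $F_{v_j}$, which is Cohen--Macaulay (since $Q$ is Buchsbaum), and $F_{v_j}$ is acyclic; so by the collapsing result \eqref{eqCollapsingAtCMFace} this link cohomology with coefficients in $\hh_0$ vanishes except in the top degree $\dim F_{v_j} - 1 = n-2$. Running this through the long exact sequences of the filtration and the shift by $q$ (from passing to the degree-$q$ piece of the exterior ideal, which raises the "used up" dimension by at least $q$ when only nonzero on simplices $I$ with $|I|\leqslant q$), one gets that $H^i(S_Q; \hh_0 \otimes \I^{(q)})$ can only be nonzero for $i$ near the top, i.e. for $i > n-1-q$.

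The main obstacle I anticipate is getting the \emph{exact} numerics of the shift right: one must track, through the iterated extensions $\I_{\leqslant j}/\I_{\leqslant j-1}$, how the "top degree $n-2$" of a single link-collapse, combined with the requirement $|I| \leqslant q$ on the support and the $q$-fold exterior degree, accumulates to precisely the bound $n-1-q$ rather than something weaker. In other words, I expect the combinatorial/homological-algebra step of assembling the stalkwise and filtration-wise vanishing into the global bound $i \leqslant n-1-q$ — rather than any single geometric input — to be where the real care is needed; a clean way to organize it is probably by induction on $q$ (or on $\dim S_Q$), using the short exact sequence $0 \to \hh_0\otimes\I^{(q)}_{\leqslant j-1} \to \hh_0\otimes\I^{(q)}_{\leqslant j} \to \hh_0\otimes(\I_{\leqslant j}/\I_{\leqslant j-1})^{(q)} \to 0$ and the inductive hypothesis applied to the Cohen--Macaulay links, where the relevant bound is one better (by one in dimension). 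Alternatively, since the paper flags that this follows from the sheaf/cosheaf duality of Theorem~\ref{thmDuality}, the intended proof may instead dualize $\hh_0 \otimes \I$ to a cosheaf whose homology is visibly concentrated in high degrees; I would keep that as a fallback if the direct filtration argument gets unwieldy.
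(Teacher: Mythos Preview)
Your plan contains a factual error that undermines the direct approach, and your fallback is in fact what the paper does.

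\textbf{The degree claim is false for $\I$.} You assert that $\I^{(q)}(I)=0$ when $q<|I|$ because ``the ideal generated by $|I|$ independent linear forms in an exterior algebra starts in degree $|I|$.'' This is not so: the ideal $(\omega_{i_1},\ldots,\omega_{i_k})\subset\Lambda$ contains each $\omega_{i_s}$, so $\I^{(1)}(I)\neq 0$ for every $I\neq\varnothing$. You have confused $\I(I)$ with the \emph{principal} ideal $\Pi_I=(\omega_{i_1}\wedge\cdots\wedge\omega_{i_k})$, which does start in degree $|I|$. Consequently your claim that $\hh_0\otimes\I^{(q)}$ is supported on $\{I:|I|\leqslant q\}$ is wrong, and the phrase ``the requirement $|I|\leqslant q$ on the support'' later in your sketch has no force. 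Your vertex-ordered filtration has a second issue: the stalk of $\I_{\leqslant j}/\I_{\leqslant j-1}$ at $I\in\lk_{S_Q}v_j$ is $\omega_{v_j}\cdot(\Lambda/\I_{\leqslant j-1}(I))$, but $\I_{\leqslant j-1}(I)$ depends on which of $v_1,\ldots,v_{j-1}$ are vertices of $I$, so the sheaf $\mathcal{G}$ is not locally constant on $\lk_{S_Q}v_j$; it is another ideal-quotient sheaf, and untangling this forces the nested induction you were worried about.

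\textbf{What the paper does.} The paper takes precisely the duality route you mention as a fallback. It resolves $\hh_0\otimes\I$ by a Taylor-type complex with terms $\R_{-k}=\bigoplus_{|I|=k+1}\hh_0\otimes\ups{I}^{\Pi_I}$ (exactness is stalkwise inclusion--exclusion in $\Lambda$), forms the associated bicomplex of cochains, and compares the two spectral sequences. One collapses to $H^*(S_Q;\hh_0\otimes\I)$. For the other, Example~\ref{exSheafLocTens} and the acyclicity of $F_I$ via \eqref{eqCollapsingAtCMFace} give $H^l(S_Q;\hh_0\otimes\ups{I}^{\Pi_I})\cong\Pi_I$ for $l=n-1$ and $0$ otherwise; assembling over $I$ yields the chain complex of the cosheaf $\Pih$, so $H^k(S_Q;\hh_0\otimes\I)\cong H_{n-1-k}(S_Q;\Pih)$. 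Now your degree observation becomes correct and decisive on the cosheaf side: $\Pih^{(q)}(I)=\Pi_I^{(q)}=0$ once $\dim I\geqslant q$, so $C_i(S_Q;\Pih^{(q)})=0$ for $i\geqslant q$, and hence $H^{k}(S_Q;\hh_0\otimes\I^{(q)})=0$ for $k\leqslant n-1-q$. The clean bound appears only after dualizing; trying to see it directly on $\I$ is exactly where your argument stalls.
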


The proof follows from a more general sheaf-theoretical fact and
is postponed to section \ref{SecKeyLemma}. In the following we
simply write $S$ instead of $S_Q$. By construction,
$\Ey^2_{p,q}\cong H^{n-1-p}(S;\hh^Y_q)$ and $\Ex^2_{p,q}\cong
H^{n-1-p}(S;\hh^X_q)$. The Key lemma \ref{lemmaKeyLemma} and exact
sequence \eqref{eqLongExact} imply

\begin{lemma}\label{lemIsomInject}
\begin{gather*}
f_*^2\colon \Ey^2_{p,q}\to\Ex^2_{p,q} \mbox{ is an isomorphism
if }p>q,\\
f_*^2\colon \Ey^2_{p,q}\to\Ex^2_{p,q} \mbox{ is injective if }p=q.
\end{gather*}
\end{lemma}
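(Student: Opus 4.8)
The plan is to read both statements off directly from the long exact sequence \eqref{eqLongExact}, after identifying the map $f^2_*\colon\Ey^2_{p,q}\to\Ex^2_{p,q}$ with the appropriate arrow in that sequence and locating the two neighbouring terms within the vanishing range of the Key Lemma. Since $\Ey^2_{p,q}\cong H^{n-1-p}(S;\hh^Y_q)$ and $\Ex^2_{p,q}\cong H^{n-1-p}(S;\hh^X_q)$, and these isomorphisms are compatible with $f_*$, the second-page map $f^2_*$ is exactly the arrow $H^{i-1}(S;\hh^Y_q)\to H^{i-1}(S;\hh^X_q)$ of \eqref{eqLongExact} with $i-1=n-1-p$, i.e.\ $i=n-p$. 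So the relevant segment of the long exact sequence reads
\[
H^{n-1-p}(S;\hh_0\otimes\I^{(q)})\longrightarrow H^{n-1-p}(S;\hh^Y_q)\stackrel{f^2_*}{\longrightarrow} H^{n-1-p}(S;\hh^X_q)\longrightarrow H^{n-p}(S;\hh_0\otimes\I^{(q)}).
\]

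Next I would compare the two degrees $n-1-p$ and $n-p$ at the ends of this segment against the threshold $i\leqslant n-1-q$ provided by Lemma \ref{lemmaKeyLemma}. One gets $H^{n-1-p}(S;\hh_0\otimes\I^{(q)})=0$ precisely when $n-1-p\leqslant n-1-q$, that is when $p\geqslant q$; and $H^{n-p}(S;\hh_0\otimes\I^{(q)})=0$ precisely when $n-p\leqslant n-1-q$, that is --- since all quantities are integers --- when $p>q$.

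Feeding this back into exactness finishes the argument: when $p>q$ both outer terms vanish, so $f^2_*$ is both injective and surjective, hence an isomorphism; when $p=q$ only the left outer term vanishes, which gives injectivity of $f^2_*$ and in general nothing more. I do not expect any obstacle at this stage --- the entire content has been front-loaded into the Key Lemma, whose proof is deferred to Section \ref{SecKeyLemma}; here everything is a short diagram chase, the only subtlety being to keep the index bookkeeping straight and to observe that the strict inequality $p>q$ is the correct translation of $n-p\leqslant n-1-q$ for integers.
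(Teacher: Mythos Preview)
Your proof is correct and is precisely the argument the paper intends: the paper states only that the lemma follows from the Key Lemma and the long exact sequence \eqref{eqLongExact}, and you have spelled out that deduction with the right index bookkeeping. There is nothing to add.
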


In case $\ld=n$ this observation immediately describes
$\Ex^2_{*,*}$ in terms of $\Ey^2_{*,*}$. Under the notation
\[
(\dy)^r_{p,q}\colon\Ey^r_{p,q}\to\Ey^r_{p-r,q+r-1},\qquad
(\dx)^r_{p,q}\colon\Ex^r_{p,q}\to\Ex^r_{p-r,q+r-1}
\]
there holds

\begin{thm}\label{thmTwoSpecSeqGeneral}
Let $Q$ be Buchsbaum pseudo-cell complex of dimension $n$, $Y$ be
a principal $T^n$-bundle over $Q$, $f\colon Y\to X= Y/\simc$ ---
the quotient construction, and
$f_*^r\colon\Ey^r_{*,*}\to\Ex^r_{*,*}$
--- the induced map of homological spectral sequences associated
with filtrations \ref{eqFiltrationY}, \ref{eqFiltrationX}. Then
\[
f_*^2\colon\Ey^2_{p,q}\to \Ex^2_{p,q}\mbox{ is }\begin{cases}
\mbox{an isomorphism if } q<p \mbox{ or } q=p=n,\\
\mbox{injective if } q=p<n,
\end{cases}
\]
and $\Ex^2_{p,q}=0$ if $q>p$. Higher differentials of
$\Ex^*_{*,*}$ thus have the form
\[
(\dx)^r_{p,q}=\begin{cases}
f_*^r\circ(\dy)^r_{p,q}\circ(f_*^r)^{-1},
\mbox{ if } p-r\geqslant q+r-1,\\
0\mbox{ otherwise},
\end{cases}
\]
for $r\geqslant 2$.
\end{thm}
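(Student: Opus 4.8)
The plan is to reduce everything to the Key Lemma and the long exact sequence \eqref{eqLongExact}. First I would observe that by construction $\Ey^2_{p,q}\cong H^{n-1-p}(S;\hh^Y_q)$ and $\Ex^2_{p,q}\cong H^{n-1-p}(S;\hh^X_q)$, so the claims about $f_*^2$ are really claims about the cohomology map $f_*\colon H^{n-1-p}(S;\hh^Y_q)\to H^{n-1-p}(S;\hh^X_q)$. Substituting $i-1=n-1-p$, i.e. $i=n-p$, into \eqref{eqLongExact} gives the six-term exact sequence
\[
H^{n-p-1}(S;\hh_0\otimes\I^{(q)})\to\Ey^2_{p,q}\xrightarrow{f_*^2}\Ex^2_{p,q}\to H^{n-p}(S;\hh_0\otimes\I^{(q)}).
\]
The Key Lemma \ref{lemmaKeyLemma} says $H^i(S;\hh_0\otimes\I^{(q)})=0$ whenever $i\leqslant n-1-q$. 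So the left group vanishes when $n-p-1\leqslant n-1-q$, i.e. $q\leqslant p$, and the right group vanishes when $n-p\leqslant n-1-q$, i.e. $q\leqslant p-1$, that is $q<p$. Hence for $q<p$ both flanking groups vanish and $f_*^2$ is an isomorphism, while for $q=p$ only the left group vanishes and $f_*^2$ is injective. This is exactly Lemma \ref{lemIsomInject}, which I would simply cite.

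Next I would handle the remaining cases not covered by $q\leqslant p<n$. For the vanishing $\Ex^2_{p,q}=0$ when $q>p$: here $\hh^X_q$ is the quotient sheaf in the short exact sequence $0\to\hh_0\otimes\I^{(q)}\to\hh^Y_q\to\hh^X_q\to0$ of Lemma \ref{lemmaIshortNonTrunc}. In the relevant degree we are computing $H^{n-1-p}(S;\hh^X_q)$; the long exact sequence sandwiches it between $H^{n-1-p}(S;\hh^Y_q)$ and $H^{n-p}(S;\hh_0\otimes\I^{(q)})$. But since $\ld=n$, the sheaf $\hh^Y_q$ is, by Lemma \ref{lemmaLaDefin} together with the description of $\Ey^r_{*,*}$ in subsection \ref{subsecSpecSeqQ}, concentrated so that $\Ey^2_{p,q}=0$ for $q>0$ except along $p=n$; more precisely the $\Eq$-analysis gives $\Ey^2_{p,q}\cong\Eq^2_{p,q}\otimes\text{(stuff)}$ which vanishes for $q>p$ in the range $p<n$, and for $p=n$ one checks $\hh^X_n=0$ directly from $\hh_n^Y=\hh_0\otimes\Lambda^{(n)}=\hh_0\otimes\I^{(n)}$, so the quotient is zero. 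Feeding this into the long exact sequence together with the Key Lemma (which kills $H^{n-p}(S;\hh_0\otimes\I^{(q)})$ once $q\le p-1$, and a separate small argument for $q>p$) forces $\Ex^2_{p,q}=0$ for $q>p$. For the corner case $q=p=n$: then $\hh^X_n=0$ as just noted, but this is the case $p=n$ where the relevant cohomological degree is $H^{-1}$; here one uses instead that $f_*^1\colon H_*(Y,\dd Y)\to H_*(X,\dd X)$ is an isomorphism by excision (Remark \ref{remXYsameForPeqN}), and an isomorphism survives to $\Ey^2_{n,n}\to\Ex^2_{n,n}$ because along $p=n$ there is no incoming differential on the first page, so $\Ey^2_{n,q}$ and $\Ex^2_{n,q}$ are computed the same way from the isomorphic $\Ey^1_{n,q}\cong\Ex^1_{n,q}$ modulo the image of $\dy^1,\dx^1$, which correspond under $f_*$.

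Finally, the statement about higher differentials is a formal consequence of $f_*^2$ being an isomorphism on the part of the $E^2$-page where the differentials $(\dx)^r$ can have nonzero source and target. Since $f_*^r$ is a morphism of spectral sequences (it commutes with all differentials and induces $f_*^{r+1}$ on the next page), once $f_*^2$ is an isomorphism at bidegree $(p,q)$ and at $(p-r,q+r-1)$ — which, since the differential pattern forces $p-r\geqslant q+r-1$ precisely in the region $q\leqslant p$ where $f_*^2$ is iso or $q>p$ where $\Ex^2=0$ — the iterated passage to homology keeps $f_*^r$ an isomorphism there, and naturality gives $(\dx)^r=f_*^r\circ(\dy)^r\circ(f_*^r)^{-1}$; outside that region $\Ex^r_{*,*}$ vanishes in the relevant spot so $(\dx)^r=0$. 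I expect the main obstacle to be bookkeeping the boundary line $p=n$ and the corner $(n,n)$ cleanly — the Key Lemma handles the bulk effortlessly, but the edge of the $\ang$-shaped region needs the excision isomorphism of Remark \ref{remXYsameForPeqN} and a careful check that no differential leaves or enters those corner spots, which is where sign conventions and index shifts are easiest to get wrong.
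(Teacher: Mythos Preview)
Your reduction to the Key Lemma and the long exact sequence \eqref{eqLongExact} is exactly the paper's approach, and your derivation of Lemma~\ref{lemIsomInject} (isomorphism for $q<p$, injectivity for $q=p$) is correct.

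There is, however, a genuine gap in your argument for the vanishing $\Ex^2_{p,q}=0$ when $q>p$. You try to squeeze $\Ex^2_{p,q}$ between $\Ey^2_{p,q}$ and $H^{n-p}(S;\hh_0\otimes\I^{(q)})$ and then claim ``the $\Eq$-analysis gives $\Ey^2_{p,q}\cong\Eq^2_{p,q}\otimes\text{(stuff)}$ which vanishes for $q>p$ in the range $p<n$.'' This is false on two counts. First, the Künneth decomposition is $\Ey^2_{p,q}\cong\bigoplus_{q_1+q_2=q}\Eq^2_{p,q_1}\otimes\Lambda^{(q_2)}$, not a single tensor factor; the summand with $q_1=0$ contributes $H_p(\dd Q)\otimes\Lambda^{(q)}$, which is typically nonzero for any $0\leqslant q\leqslant n$ (e.g.\ $\Ey^2_{0,1}\cong H_0(\dd Q)\otimes\Lambda^{(1)}\neq 0$). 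Second, the Key Lemma gives vanishing of $H^i(S;\hh_0\otimes\I^{(q)})$ only for $i\leqslant n-1-q$, which for $q>p$ does \emph{not} cover $i=n-p$. So neither flanking term in your sandwich is zero, and the argument collapses. The correct (and much simpler) argument uses $\ld=n$ directly: for $|I|=n-p$ one has $\dim X_I=2p$, hence $\Ex^1_{p,q}=\bigoplus_{|I|=n-p}H_{p+q}(X_I,\dd X_I)=0$ whenever $p+q>2p$, i.e.\ $q>p$. Equivalently, $(\La/\I)^{(q)}(I)=0$ for $q>n-|I|$, so the cochain groups themselves vanish.

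Your treatment of the corner $(n,n)$ is also not quite right. You write that $\Ey^2_{n,q}$ and $\Ex^2_{n,q}$ are ``computed the same way from the isomorphic $\Ey^1_{n,q}\cong\Ex^1_{n,q}$ modulo the image of $\dy^1,\dx^1$'', but at $p=n$ the second page is a \emph{kernel} of $d^1$, not a cokernel: $\Ex^2_{n,n}=\ker(\dx^1\colon\Ex^1_{n,n}\to\Ex^1_{n-1,n})=\Ex^1_{n,n}$ since $\Ex^1_{n-1,n}=0$ by the dimension count above, whereas $\Ey^1_{n-1,n}=\bigoplus_{|I|=1}H_{n-1}(F_I,\dd F_I)\otimes\Lambda^{(n)}$ need not vanish. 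So the excision isomorphism $\Ey^1_{n,n}\cong\Ex^1_{n,n}$ alone does not settle the question; one must actually analyze $\ker\dy^1$ at this spot.
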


If $Y$ is a trivial $T^n$-bundle, then the structure of
$\Ey^*_{*,*}\cong \Eq^*_{*,*}\otimes\Lambda$ is described
completely by subsection \ref{subsecSpecSeqQ}. In this case almost
all the terms of $\Ex^*_{*,*}$ are described explicitly.

\begin{thm}\label{thmEX2structure}
In the notation of Theorem \ref{thmTwoSpecSeqGeneral} suppose
$Y=Q\times T^n$. Let $\Lambda^{(q)}=H_q(T^n)$ and $\delta_i\colon
H_i(Q,\dd Q) \to H_{i-1}(\dd Q)$ be the connecting homomorphisms.
Then
\begin{equation}\label{eqEX2structure}
\Ex^2_{p,q}\cong\begin{cases} H_p(\dd Q)\otimes\Lambda^{(q)},
\mbox{ if }
q<p\leqslant n-2;\\
\coker\delta_n\otimes\Lambda^{(q)}, \mbox{ if } q<p=n-1;\\
\Ker\delta_n\otimes \Lambda^{(q)}\oplus
\left(\bigoplus\limits_{\substack{q_1+q_2=n+q\\q_1<n}}H_{q_1}(Q,\dd
Q) \otimes \Lambda^{(q_2)}\right), \mbox{ if } q<p=n;\\
H_n(Q,\dd Q) \otimes \Lambda^{(n)},\mbox{ if } q=p=n;\\
0, \mbox{ if } q>p.
\end{cases}
\end{equation}
The maps $f_*^2\colon H_q(\dd
Q)\otimes\Lambda^{(q)}\hookrightarrow \Ex^2_{q,q}$ are injective
for $q<n-1$. Higher differentials for $r\geqslant 2$ are the
following:
\[
\dx^r\cong\begin{cases} \delta_{q_1}\otimes\id_{\Lambda}\colon
\overset{\substack{\Ex^*_{n,q_1+q_2-n}\\
\cup}}{H_{q_1}(Q,\dd Q)\otimes\Lambda^{(q_2)}}\to \overset{\substack{\Ex^*_{q_1-1,q_2}\\
\cup}}{H_{q_1-1}(\dd Q)\otimes\Lambda^{(q_2)}}, \\
\hfill \mbox{
if } r=n-q_1+1, q_1-1>q_2;\\
f_*^2\circ(\delta_{q_1}\otimes \id_{\Lambda})\colon H_{q_1}(Q,\dd
Q)\otimes\Lambda^{(q_2)}\to H_{q_1-1}(\dd
Q)\otimes\Lambda^{(q_2)}\hookrightarrow \Ex^2_{q_1-1,q_1-1},\qquad
\\ \hfill \mbox{ if } r=n-q_1+1, q_1-1=q_2;\\
0, \mbox{ otherwise}.
\end{cases}
\]
\end{thm}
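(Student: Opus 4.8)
The strategy is to combine Theorem \ref{thmTwoSpecSeqGeneral} with the explicit description of $\Ey^*_{*,*}\cong \Eq^*_{*,*}\otimes\Lambda$ coming from subsection \ref{subsecSpecSeqQ}. Since $Y=Q\times T^n$ is trivial, the K\"unneth isomorphism gives $\Ey^2_{p,q}\cong\bigoplus_{q_1+q_2=q}\Eq^2_{p,q_1}\otimes\Lambda^{(q_2)}$, and more precisely $\Ey^r_{*,*}\cong\Eq^r_{*,*}\otimes\Lambda$ as spectral sequences of $\Lambda$-modules for every $r$, with differential $\dy^r=\dqq^r\otimes\id_\Lambda$. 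Plugging in the formula \eqref{eqQsseq2} for $\Eq^2_{p,q}$, which is concentrated in the column $p=n$ (for $q\leqslant 0$) together with the bottom row $q=0$, one reads off $\Ey^2_{p,q}$: it is $H_p(\dd Q)\otimes\Lambda^{(q)}$ for $p\leqslant n-2$, $\coker\delta_n\otimes\Lambda^{(q)}$ for $p=n-1$, and for $p=n$ it is the sum of $\Ker\delta_n\otimes\Lambda^{(q)}$ with the contributions $H_{q_1}(Q,\dd Q)\otimes\Lambda^{(q_2)}$, $q_1+q_2=n+q$, $q_1<n$, coming from the negative rows of $\Eq$.

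**Applying Theorem \ref{thmTwoSpecSeqGeneral}.** Now I would transfer this to $\Ex^2_{p,q}$ using the dichotomy of Theorem \ref{thmTwoSpecSeqGeneral}: $f_*^2$ is an isomorphism when $q<p$ or $q=p=n$, it is injective when $q=p<n$, and $\Ex^2_{p,q}=0$ when $q>p$. For $q<p$ this immediately yields the first four lines of \eqref{eqEX2structure} — one only needs to note that in the range $q<p=n$ the term $\Ker\delta_n\otimes\Lambda^{(q)}$ and all the K\"unneth summands $H_{q_1}(Q,\dd Q)\otimes\Lambda^{(q_2)}$ with $q_1<n$ survive, while the $q_1=n$ summand $H_n(Q,\dd Q)\otimes\Lambda^{(q)}$ is exactly $\Eq^2_{n,0}\otimes\Lambda^{(q)}=\Ker\delta_n\otimes\Lambda^{(q)}$ already accounted for. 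The case $q=p=n$: here $\Ey^2_{n,n}\cong\bigoplus_{q_1+q_2=2n}\Eq^2_{n,q_1-n}\otimes\Lambda^{(q_2)}$, and since $q_2\leqslant n$ forces $q_1\geqslant n$, hence $q_1=n$, $q_2=n$, only $\Eq^2_{n,0}\otimes\Lambda^{(n)}=H_n(Q,\dd Q)\otimes\Lambda^{(n)}$ remains (using that $\Ker\delta_n=\Eq^2_{n,0}$, but when $q=n$ the full $\Eq^2_{n,0}=H_n(Q,\dd Q)$ appears rather than just $\Ker$, because the negative-row contributions are forced out by the constraint $q_2\le n$). The statement about injectivity of $f_*^2\colon H_q(\dd Q)\otimes\Lambda^{(q)}\hookrightarrow\Ex^2_{q,q}$ for $q<n-1$ is the $p=q<n$ clause of Theorem \ref{thmTwoSpecSeqGeneral} applied to the summand $H_q(\dd Q)\otimes\Lambda^{(q)}=\Eq^2_{q,0}\otimes\Lambda^{(q)}$ of $\Ey^2_{q,q}$.

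**Higher differentials.** For the differentials I would again use the second clause of Theorem \ref{thmTwoSpecSeqGeneral}: $(\dx)^r_{p,q}=f_*^r\circ(\dy)^r_{p,q}\circ(f_*^r)^{-1}$ whenever $p-r\geqslant q+r-1$ and $0$ otherwise. By Claim \ref{claimFormalPage}, the only nonzero differentials of $\Eq^r$ (for $r\geqslant 1+$) are $\dqq^r\colon\Eq^r_{n,1-r}\to\Eq^r_{n-r,0}$, identified with $\delta_{n+1-r}\colon H_{n+1-r}(Q,\dd Q)\to H_{n-r}(\dd Q)$. Tensoring with $\Lambda$, the nonzero $\dy^r$ are exactly the maps $\delta_{q_1}\otimes\id_{\Lambda^{(q_2)}}$ on the summand $H_{q_1}(Q,\dd Q)\otimes\Lambda^{(q_2)}\subset\Ey^r_{n,q_1+q_2-n}$ landing in $H_{q_1-1}(\dd Q)\otimes\Lambda^{(q_2)}\subset\Ey^r_{q_1-1,q_2}$, with $r=n-q_1+1$. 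Whether this survives as $\dx^r$ depends on comparing the target bidegree $(q_1-1,q_2)$ against the vanishing line: $p=q_1-1>q=q_2$ puts us in the isomorphism range and gives the first case; $q_1-1=q_2$ puts the target on the diagonal $p=q<n$ (note $q_1-1=q_2\le n-1$ since $q_2\le n$ and $q_1\ge 1$ forces, after checking the edge, $q_1-1<n$), where $\Ex^2_{q_1-1,q_1-1}$ only receives the image of $f_*^2$, giving the second case; and $q_1-1<q_2$ would force the target into the vanishing region $q>p$, so the differential is $0$. One must also check that no higher differential lands in the strictly-above-diagonal zero region from the surviving classes, and that the K\"unneth summands are respected by $f_*^r$ — both follow because $f_*^r$ is a $\Lambda$-module map and $\Ey^r\cong\Eq^r\otimes\Lambda$.

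**Main obstacle.** The delicate bookkeeping is entirely at the corner $p=n$: separating the part of $\Ey^2_{n,q}$ that comes from the bottom row $\Eq^2_{n,0}=\Ker\delta_n$ (which is $f_*^2$-isomorphic and contributes $\Ker\delta_n\otimes\Lambda^{(q)}$) from the part coming from the negative rows $\Eq^2_{n,q_1-n}=H_{q_1}(Q,\dd Q)$, $q_1<n$, and then correctly tracking which of these negative-row classes the higher differentials kill versus which survive, with the change of behaviour exactly when the target hits the diagonal. The case $q=p=n$ is a genuine exception — there the constraint $q_2\le n$ eliminates every negative-row contribution and also replaces $\Ker\delta_n$ by all of $H_n(Q,\dd Q)$ — and this must be singled out. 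Everything else is a mechanical transcription of subsection \ref{subsecSpecSeqQ} and Theorem \ref{thmTwoSpecSeqGeneral} through the K\"unneth isomorphism.
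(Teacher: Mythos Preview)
Your overall plan is exactly the paper's: Theorem~\ref{thmEX2structure} is obtained by feeding the explicit description of $\Eq^*_{*,*}$ from subsection~\ref{subsecSpecSeqQ}, tensored with $\Lambda$, through the comparison of Theorem~\ref{thmTwoSpecSeqGeneral}. The treatment of the cases $q<p$ and of the higher differentials is correct and matches the intended argument.

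There is, however, a genuine gap at the corner $q=p=n$. You compute $\Ey^2_{n,n}=\Eq^2_{n,0}\otimes\Lambda^{(n)}$, and then assert this equals $H_n(Q,\dd Q)\otimes\Lambda^{(n)}$. But $\Eq^2_{n,0}=\Ker\delta_n$, as you yourself note in the parenthetical; the claim that ``when $q=n$ the full $\Eq^2_{n,0}=H_n(Q,\dd Q)$ appears'' is simply false --- $\Eq^2_{n,0}$ does not depend on which row of $\Ey$ you are looking at. The constraint $q_2\leqslant n$ kills the negative-row summands, but it cannot enlarge $\Ker\delta_n$ to all of $H_n(Q,\dd Q)$. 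In fact the map $f_*^2\colon\Ey^2_{n,n}\to\Ex^2_{n,n}$ is only injective in general (for a simple polytope $\delta_n$ is an isomorphism, so $\Ey^2_{n,n}=0$ while $\Ex^2_{n,n}\cong\ko$); the ``or $q=p=n$'' clause in Theorem~\ref{thmTwoSpecSeqGeneral} should not be read as giving you this case for free.

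The correct argument bypasses $\Ey$ entirely at this spot. Since $\ld=n$, for every vertex $i$ the quotient $(\La/\I(i))^{(n)}$ vanishes (the ideal generated by a nonzero linear form contains the whole top degree), hence $\hht^X_n=0$ and in particular $\Ex^1_{n-1,n}=0$. Therefore
\[
\Ex^2_{n,n}=\Ex^1_{n,n}=H_{2n}(X,\dd X)\cong H_{2n}(Y,\dd Y)=H_n(Q,\dd Q)\otimes\Lambda^{(n)},
\]
the last isomorphism by Remark~\ref{remXYsameForPeqN} and K\"unneth. This is the computation underlying the $p=n$ line of the extra page~\eqref{eqEXstructureFormal}, and it is what you should invoke here.
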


Using the formalism of extra pages introduced in subsection
\ref{subsecExtraTermX}, Theorem \ref{thmEX2structure} can be
restated in a more convenient and concise form

\begin{stm}\label{stmThmSpecConvint}
There exists a spectral sequence
whose first term is
\begin{equation}\label{eqEXstructureFormal}
\Ex^{1+}_{p,q}\cong\begin{cases} H_p(\dd
Q)\otimes\Lambda^{(q)},\mbox{
if }q<p<n;\\
\bigoplus\limits_{q_1+q_2=q+n}H_{q_1}(Q,\dd
Q)\otimes\Lambda^{(q_2)},\mbox{ if }p=n;\\
0, \mbox{ if }q>p;
\end{cases}
\end{equation}
and subsequent terms coincide with $\Ex^r_{*,*}$ for $r\geqslant
2$. There exist injective maps $f_*^{1+}\colon H_q(\dd
Q)\otimes\Lambda^{(q)}\hookrightarrow \Ex^2_{q,q}$ for $q<n$.
Differentials for $r\geqslant 1$ have the form
\[
\dx^r\cong\begin{cases} \delta_{q_1}\otimes\id_{\Lambda}\colon
\overset{\substack{\Ex^*_{n,q_1+q_2-n}\\
\cup}}{H_{q_1}(Q,\dd Q)\otimes\Lambda^{(q_2)}}\to \overset{\substack{\Ex^*_{q_1-1,q_2}\\
\cup}}{H_{q_1-1}(\dd Q)\otimes\Lambda^{(q_2)}}, \\
\hfill \mbox{
if } r=n-q_1+1, q_1-1>q_2;\\
f_*^{1+}\circ(\delta_{q_1}\otimes \id_{\Lambda})\colon
H_{q_1}(Q,\dd Q)\otimes\Lambda^{(q_2)}\to H_{q_1-1}(\dd
Q)\otimes\Lambda^{(q_2)}\hookrightarrow \Ex^2_{q_1-1,q_1-1},\qquad
\\ \hfill \mbox{ if } r=n-q_1+1, q_1-1=q_2;\\
0, \mbox{ otherwise}.
\end{cases}
\]
\end{stm}

Note that the terms $\Ex^*_{q,q}$ for $q<n$ are not mentioned in
the lists \eqref{eqEX2structure}, \eqref{eqEXstructureFormal}. Let
us call $\bigoplus_{q<n}\Ex^{1+}_{q,q}$ the \emph{border} of
$\Ex^*_{*,*}$. This name is due to the fact that all entries above
the border vanish: $\Ex^*_{p,q}=0$ for $q>p$.

Denote $\dim_{\ko}\Hr_p(S)=\dim_{\ko}\Hr_p(\dd Q)$ by $\br_p(S)$
for $p<n$. The ranks of the border components are described as
follows:

\begin{thm}\label{thmBorderStruct}
In the notation of Theorem \ref{thmEX2structure} and statement
\ref{stmThmSpecConvint}
\[
\dim \Ex^{1+}_{q,q} = h_q(S)+{n\choose
q}\sum\limits_{p=0}^{q}(-1)^{p+q}\br_p(S)
\]
for $q\leqslant n-1$, where $h_q(S)$ are the $h$-numbers of the
simplicial poset $S$.
\end{thm}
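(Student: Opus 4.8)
The plan is to compute $\dim \Ex^{1+}_{q,q}$ by combining the long exact sequence \eqref{eqLongExact} with the rank count that governs the whole spectral sequence. First I would observe that $\Ex^{1+}_{q,q} \cong H^{n-1-q}(S;\hh^X_q)$, and apply the long exact sequence in sheaf cohomology coming from $0\to \hh_0\otimes\I^{(q)}\to \hh^Y_q\to \hh^X_q\to 0$, reading off the two relevant terms $H^{n-1-q}$ and $H^{n-q}$. Since $Y = Q\times T^n$, we have $\hh^Y_q = \hh_0\otimes\Lambda^{(q)}$ (as a globally trivial sheaf), so $H^i(S;\hh^Y_q) \cong H^i(S;\hh_0)\otimes\Lambda^{(q)}$; and by the Key Lemma \ref{lemmaKeyLemma}, $H^i(S;\hh_0\otimes\I^{(q)}) = 0$ for $i \leqslant n-1-q$. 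The subtlety is that at $i = n-1-q$ the Key lemma kills the left term but not necessarily $H^{n-q}(S;\hh_0\otimes\I^{(q)})$, so the LES gives a short exact sequence
\[
0\to H^{n-1-q}(S;\hh_0)\otimes\Lambda^{(q)}\to \Ex^{1+}_{q,q}\to H^{n-q}(S;\hh_0\otimes\I^{(q)})\to 0,
\]
at least in the Cohen--Macaulay-face range where $\hht^Y_q$ agrees with $\hh^Y_q$ away from $\varnothing$; I would need to handle the $\varnothing$-contribution carefully, comparing $\Ex^{1+}$ (built from $\Eyt^2$) with the truncated sheaf $\hht^X_q$ rather than $\hh^X_q$, exactly as in subsection \ref{subsecStructOfEX1} and lemma \ref{lemmaIshortNonTrunc}.

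The first summand contributes $\dim H^{n-1-q}(S;\hh_0)\cdot\binom{n}{q}$. Since $Q$ is Buchsbaum, $\hht_0 = \hh_0$ away from $\varnothing$ and the relevant cohomology is that of the Buchsbaum poset: but $H^{n-1-q}(S;\hh_0)$ is not simply $H_q(\dd Q)$ unless we are below the border; instead I expect this to equal $\Hr_q(\dd Q) \oplus$ (a correction from $\varnothing$), which is why the $h$-number and not merely the Betti numbers appear. So the main computational content is the dimension of $H^{n-q}(S;\hh_0\otimes\I^{(q)})$. The cleanest route is an Euler-characteristic / rank-counting argument: by lemma \ref{lemmaTruncExtDescr} and lemma \ref{lemmaIdealQuotTorus}, $\hh_0\otimes(\La/\I)^{(q)}$ has the $q$-th exterior-power-of-a-quotient-torus as fiber, and the whole graded sheaf $\hh_0\otimes\I$ fits into an exact Koszul-type complex. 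Taking alternating sums of ranks of $C^*(S;\hh_0\otimes\I^{(q)})$ over the poset $S$, the only surviving cohomology is in the single top degree $n-q$ (again by the Key lemma, which forces vanishing below), so its dimension equals $(-1)^{n-q}$ times the Euler characteristic of that cochain complex; this Euler characteristic is a purely combinatorial sum over $I\in S$ of $(-1)^{\dim I}\dim \hh_0(I)\cdot\dim\I(I)^{(q)}$, i.e. $(-1)^{\dim I}\cdot\dim\I(I)^{(q)}$ since proper faces are acyclic, which unwinds into $f$-numbers and binomial coefficients.

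Putting the two pieces together and converting $f$-numbers to $h$-numbers via the standard identity $h_q(S) = \sum_i (-1)^{q-i}\binom{n-i}{n-q}f_{i-1}(S)$, together with the alternating-sum identity relating the $\varnothing$-correction term to $\sum_{p=0}^{q}(-1)^{p+q}\br_p(S)$ (this is essentially the computation in \eqref{eqBuchManPoincS} / remark \ref{remLocalSheavesOnPoset} applied degreewise), gives the claimed formula $\dim\Ex^{1+}_{q,q} = h_q(S) + \binom{n}{q}\sum_{p=0}^q(-1)^{p+q}\br_p(S)$. I expect the main obstacle to be bookkeeping the $\varnothing$-terms correctly: one must be scrupulous about where $\hh$ versus $\hht$, and $\I$ versus its truncation, enter, since the border entries $\Ex^{1+}_{q,q}$ are precisely the ones that the main theorems \ref{thmEX2structure}--\ref{stmThmSpecConvint} deliberately leave out, and an off-by-one in the truncation would shift the answer by exactly the Betti-number correction we are trying to pin down. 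A safeguard is to cross-check the formula against the Cohen--Macaulay case ($\br_p(S) = 0$ for $p<n$), where it must reduce to the classical $\dim \Ex^{1+}_{q,q} = h_q(S)$.
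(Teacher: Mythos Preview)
Your approach has a genuine gap. You claim that the Key Lemma forces $H^i(S;\hh_0\otimes\I^{(q)})$ to be concentrated in the single degree $i=n-q$, so that its dimension equals (up to sign) the Euler characteristic of the cochain complex. But Lemma~\ref{lemmaKeyLemma} only gives vanishing for $i\leqslant n-1-q$; it says nothing about $i=n-q+1,\ldots,n-1$, and indeed these groups are typically nonzero. Via Theorem~\ref{thmDuality} one has $H^k(S;\hh_0\otimes\I^{(q)})\cong H_{n-1-k}(S;\Pih^{(q)})$, and the chain complex of $\Pih^{(q)}$ lives in degrees $0,\ldots,q-1$ with no reason for its homology to vanish below the top. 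Consequently your short exact sequence
\[
0\to H^{n-1-q}(S;\hh_0)\otimes\Lambda^{(q)}\to \Ex^{1+}_{q,q}\to H^{n-q}(S;\hh_0\otimes\I^{(q)})\to 0
\]
need not be exact on the right: the connecting map lands in $H^{n-q}(S;\hh_0\otimes\I^{(q)})$, but the next term $H^{n-q}(S;\hht^Y_q)$ is generally nonzero, so you only get a four-term exact sequence, not a three-term one.

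The paper avoids this by never trying to isolate a single cohomology group of $\hh_0\otimes\I^{(q)}$. Instead it runs the Euler-characteristic argument directly on the row $\Ext^1_{*,q}=C^{n-1-*}(S;\hht^X_q)$: since $\hht^X_q\cong\hh_0\otimes(\La/\I)^{(q)}$ and $\dim(\La(I)/\I(I))^{(q)}=\binom{n-|I|}{q}$, one gets $\dim\Ext^1_{p,q}=\binom{p}{q}\ft_{n-p-1}(S)$, and the row Euler characteristic $\chi^1_q$ is expressed through $h_q(S)$ via the identity of Lemma~\ref{lemmaHfromFt}. Because $\chi^2_q=\chi^1_q$ and Statement~\ref{stmThmSpecConvint} already supplies $\dim\Ext^2_{p,q}=\binom{n}{q}b_p(S)$ for every $p>q$, the single unknown $\dim\Ex^{1+}_{q,q}$ drops out of the alternating sum. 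Your Euler-characteristic instinct is right, but it should be applied to $\hht^X_q$ (whose off-diagonal cohomology you know) rather than to $\hh_0\otimes\I^{(q)}$ (whose cohomology above degree $n-q$ you do not control).
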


\begin{thm}\label{thmBorderManif}\mbox{}

(1) Let $Q$ be a Buchsbaum manifold over $\ko$. Then $\dim
\Ex^{1+}_{q,q}=h'_{n-q}(S)$ for $q\leqslant n-2$ and
$\Ex^{1+}_{n-1,n-1}=h'_1(S)+n$.

(2) Let $Q$ be Buchsbaum manifold such that $H_n(Q,\dd Q)\cong\ko$
and $\delta_n\colon H_n(Q, \dd Q)\to H_{n-1}(\dd Q)$ is injective.
Then $\dim \Ex^2_{q,q} = h'_{n-q}(S)$ for $0\leqslant q\leqslant
n$.
\end{thm}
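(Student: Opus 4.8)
The plan is to compute $\dim\Ex^{1+}_{q,q}$ from Theorem \ref{thmBorderStruct} by a purely combinatorial manipulation, using the hypotheses that control $Q$ near the top dimension. First I would recall from Theorem \ref{thmBorderStruct} that
\[
\dim\Ex^{1+}_{q,q}=h_q(S)+\binom{n}{q}\sum_{p=0}^{q}(-1)^{p+q}\br_p(S),\qquad q\leqslant n-1,
\]
and then invoke the definition of $h'$-numbers of a simplicial poset recalled later in Section \ref{secHvectors}: $h'_j(S)=h_j(S)+\binom{n}{j}\sum_{i=0}^{j-1}(-1)^{j-i-1}\br_{i-1}(S)$ (up to the indexing convention used there). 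The two quantities $\dim\Ex^{1+}_{q,q}$ and $h'_{n-q}(S)$ are both alternating sums of reduced Betti numbers of $S$ weighted by binomial coefficients; so for part (1) the task reduces to matching these two expressions. The key input is the bigraded Poincar\'e duality available when $Q$ is a Buchsbaum manifold, namely \eqref{eqBuchManPoinc}: $H^{n-1-p}(\dd Q)\cong H_p(\dd Q)$, i.e. $\br_p(S)=\br_{n-1-p}(S)$ for the relevant range. Substituting this Poincar\'e-duality relation into the formula of Theorem \ref{thmBorderStruct} and re-indexing the sum (replacing $p$ by $n-1-p$ and $q$ by $n-q$, using $h_q(S)=h_{n-q}(S)$ which holds for homology manifolds by the Dehn--Sommerville relations for simplicial posets) should turn $\dim\Ex^{1+}_{q,q}$ into exactly $h'_{n-q}(S)$. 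The correction term $\Ex^{1+}_{n-1,n-1}=h'_1(S)+n$ in the case $q=n-1$ comes from the extra summand $H_n(Q,\dd Q)$: in the manifold case $H_n(Q,\dd Q)\cong\ko$ need not hold ($Q$ need not be connected-orientable as a relative cycle), but the border entry $\Ex^{1+}_{n-1,n-1}$ receives a contribution $\binom{n}{n-1}=n$ from the top, which is precisely the discrepancy between $h'_1$ and $h'_{n-1}$'s naive image; I would track this by not applying duality at the boundary index $p=n-1$ and keeping that $\binom{n}{q}$ term explicit.

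For part (2) the plan is to pass from the artificial page $\Ex^{1+}$ to the genuine second page $\Ex^2$. By Statement \ref{stmThmSpecConvint} (equivalently Theorem \ref{thmEX2structure}) the only higher differential landing in or emanating from the border component $\Ex^{*}_{q,q}$ is $\dx^{n-q+1}$, and its source is the part of $\Ex^*_{n,\,q-n}$ equal to $H_{q}(Q,\dd Q)\otimes\Lambda^{(q)}$ composed with $\delta_{q}\otimes\id$, mapping into $H_{q-1}(\dd Q)\otimes\Lambda^{(q-1)}\hookrightarrow\Ex^2_{q-1,q-1}$. Under the hypotheses $H_n(Q,\dd Q)\cong\ko$ and $\delta_n$ injective, I would argue that the passage $\Ex^{1+}_{q,q}\leadsto\Ex^2_{q,q}$ only changes things at $q=n-1$ and $q=n$: the map $\delta_n$ being injective kills the extra summand coming from $H_n(Q,\dd Q)$ in the border at $q=n-1$ (removing exactly the ``$+n$'' correction, since $\dim H_n(Q,\dd Q)\otimes\Lambda^{(n-1)}$ contributes and $\delta_n$ injective means this image is full), and $H_n(Q,\dd Q)\cong\ko$ together with injectivity of $\delta_n$ makes $\Ex^2_{n,n}$ equal to $H_n(Q,\dd Q)\otimes\Lambda^{(n)}\cong\ko = h'_0(S)$. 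For $q\leqslant n-2$ the border entry is untouched, so $\dim\Ex^2_{q,q}=\dim\Ex^{1+}_{q,q}=h'_{n-q}(S)$ by part (1); at $q=n-1$ we get $h'_1(S)+n-n=h'_1(S)$; at $q=n$ we get $1=h'_0(S)$. I would need to check $h'_1(S)=h'_{n-1}(S)$ and $h'_0(S)=h'_n(S)$ (the Dehn--Sommerville symmetry of the $h'$-vector for homology manifolds, which follows from $\br_p(S)=\br_{n-1-p}(S)$ and $h_q=h_{n-q}$) so that the formula $\dim\Ex^2_{q,q}=h'_{n-q}(S)$ holds uniformly for $0\leqslant q\leqslant n$.

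The main obstacle I anticipate is bookkeeping at the two boundary indices $q=n-1,n$: precisely identifying which summand of $\Ex^{1+}_{n-1,n-1}$ (resp. of $\Ex^*_{n,*}$) is annihilated by $\dx^{2}=\delta_n$ and verifying that the ranks subtract as claimed. This requires tracing the injection $f_*^{1+}\colon H_{n-1}(\dd Q)\otimes\Lambda^{(n-1)}\hookrightarrow\Ex^2_{n-1,n-1}$ and the structure of $\Ex^*_{n,0}=\Ker\delta_n\otimes\Lambda^{(n)}\oplus\bigl(\bigoplus_{q_1+q_2=n,\,q_1<n}H_{q_1}(Q,\dd Q)\otimes\Lambda^{(q_2)}\bigr)$, and confirming that the relevant differential is an isomorphism onto a rank-$n$ subspace of the border exactly when $\delta_n$ is injective. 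The rest is elementary algebra with binomial coefficients and the defining formulas of $h$-, $h'$-vectors together with Poincar\'e duality \eqref{eqBuchManPoinc}; I would defer the precise statements of the Dehn--Sommerville relations for simplicial posets to Section \ref{secHvectors} where $h'$ and $h''$ are defined.
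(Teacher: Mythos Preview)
Your overall strategy matches the paper's: start from Theorem \ref{thmBorderStruct}, apply Poincar\'e duality $\br_p(S)=\br_{n-1-p}(S)$ for the homology manifold $\dd Q$, invoke Dehn--Sommerville, and re-index to reach $h'_{n-q}$. Part (2) is also handled as in the paper, by observing that the passage $\Ex^{1+}\to\Ex^2$ via $\dx^{1+}$ affects only the entries with $p=n$ and $p=n-1$, so only $q=n-1$ and $q=n$ need separate treatment.

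There is, however, a genuine error in your plan for part (1). You write that you will use ``$h_q(S)=h_{n-q}(S)$, which holds for homology manifolds by the Dehn--Sommerville relations.'' This is false in general. The Dehn--Sommerville relations for a homology manifold $S$ read
\[
h_q=h_{n-q}+(-1)^q\binom{n}{q}\bigl(1+(-1)^n\chir(S)\bigr),
\]
see \eqref{eqDehnSomQuasiManifChir}; the correction term vanishes only when $\chir(S)=(-1)^{n-1}$, i.e.\ essentially when $S$ is a homology sphere. In the paper's computation this correction term is not a nuisance to be discarded but is precisely what makes the identity work: after converting $\br_p$ to $b_p$ and applying Poincar\'e duality to flip the range of summation, one substitutes the full relation and the $(-1)^n\chi(S)$ piece combines with the partial alternating sum of Betti numbers to produce the complementary alternating sum that defines $h'_{n-q}$. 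If you set $h_q=h_{n-q}$ you will be off by exactly $(-1)^q\binom{n}{q}(1+(-1)^n\chir(S))$, and the re-indexed expression will not equal $h'_{n-q}$. So your plan as stated would fail at this step; you need the full form of Dehn--Sommerville.

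Two smaller points. First, in part (2) you conflate the differential $\dx^{1+}$ (degree $(-1,0)$, governing $\Ex^{1+}\to\Ex^2$) with the higher differentials $\dx^{n-q+1}$ from Statement \ref{stmThmSpecConvint}; your conclusion that only $q=n-1,n$ are affected is nonetheless correct, since $\dx^{1+}$ is nonzero only on the $p=n$ column. Second, you do not need to verify $h'_1=h'_{n-1}$ or $h'_0=h'_n$ at the end: the formula you are proving is $\dim\Ex^2_{q,q}=h'_{n-q}$, and you directly obtain $h'_1$ at $q=n-1$ and $h'_0=1$ at $q=n$, so no symmetry of $h'$ is required.
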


The definitions of $h$-, $h'$- and $h''$-vectors and the proof of
Theorems \ref{thmBorderStruct},\ref{thmBorderManif} and
\ref{thmHtwoPrimes} are gathered in section \ref{secHvectors}.
Note that it is sufficient to prove Theorem \ref{thmBorderStruct}
and the first part of Theorem \ref{thmBorderManif} in the case
$Q=P(S)$. Indeed, by definition, $\Ex^{1+}_{q,q}=\Ext^2_{q,q}$ for
$q\leqslant n-1$, and there exists a map $(Q\times T^n)/\simc\to
(P(S_Q)\times T^n)/\simc$ which covers the map $\varphi$ of lemma
\ref{lemmaUniverPosets} and induces the isomorphism of
corresponding truncated spectral sequences.

In the cone case the border components can be described explicitly
up to $\infty$-term.

\begin{thm}\label{thmHtwoPrimes}
Let $S$ be a Buchsbaum poset, $Q=P(S)$, $Y=Q\times T^n$,
$X=Y/\simc$, $\Ex^r_{p,q}\Rightarrow H_{p+q}(X)$ --- the
homological spectral sequence associated with filtration
\eqref{eqFiltrationX}. Then
\[
\dim\Ex^{\infty}_{q,q}=h''_q(S)
\]
for $0\leqslant q\leqslant n$.
\end{thm}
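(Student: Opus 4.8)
The plan is to compute $\dim\Ex^{\infty}_{q,q}$ by tracking what survives all the higher differentials, starting from $\Ex^{1+}_{q,q}$ whose rank is given by Theorem \ref{thmBorderStruct}, and then identifying the combinatorial meaning of the result as $h''_q(S)$. Since $Q=P(S)$ is a cone over $|S|$, the pair $(Q,\dd Q)=(\cone|S|,|S|)$ is particularly simple: $Q$ is contractible, so the long exact sequence of the pair gives $H_i(Q,\dd Q)\cong \Hr_{i-1}(\dd Q)=\Hr_{i-1}(S)$ for all $i$, and the connecting homomorphisms $\delta_i\colon H_i(Q,\dd Q)\to H_{i-1}(\dd Q)$ are \emph{isomorphisms} for $i\leqslant n$. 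This is the key simplification of the cone case: every higher differential $\dx^r$ of the form $\delta_{q_1}\otimes\id_\Lambda$ from Statement \ref{stmThmSpecConvint} is injective, and the corresponding $f_*^{1+}\circ(\delta_{q_1}\otimes\id_\Lambda)$ landing in the border is injective as well (composition of an iso with the injection $f_*^{1+}$ of Statement \ref{stmThmSpecConvint}).

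\smallskip

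First I would run the spectral sequence and see exactly how each border term $\Ex^{1+}_{q,q}$ gets hit. By Statement \ref{stmThmSpecConvint}, the only differentials into the border $\bigoplus_{q<n}\Ex^{1+}_{q,q}$ come from the column $p=n$, namely the maps $f_*^{1+}\circ(\delta_{q_1}\otimes\id_\Lambda)$ with $r=n-q_1+1$ and $q_1-1=q_2$; such a differential lands in $\Ex^2_{q_1-1,q_1-1}$, i.e. it hits the border term indexed by $q=q_1-1$. Its source is $H_{q_1}(Q,\dd Q)\otimes\Lambda^{(q_1-1)}\cong \Hr_{q_1-1}(S)\otimes\Lambda^{(q_1-1)}$, which has dimension $\br_{q_1-1}(S)\binom{n}{q_1-1}$, and it is injective (iso composed with $f_*^{1+}$). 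Also, the border terms $\Ex^{1+}_{q,q}$ themselves have no \emph{outgoing} nonzero differential: an outgoing $\dx^r$ from $(q,q)$ would go to $(q-r,q+r-1)$, but all entries strictly above the diagonal vanish, and the diagonal-to-diagonal case $r=1$ has already been absorbed into the $1+$ page. So each $\Ex^{1+}_{q,q}$ survives to $\Ex^\infty_{q,q}$ after quotienting by the single injective image of the relevant differential from the $p=n$ column. Therefore
\[
\dim\Ex^\infty_{q,q}=\dim\Ex^{1+}_{q,q}-\binom{n}{q+1}\br_q(S)
\]
for $q\leqslant n-1$, and one handles $q=n$ separately (where the border notion degenerates; here $\Ex^{1+}_{n,n}=H_n(Q,\dd Q)\otimes\Lambda^{(n)}$ is one-dimensional precisely when $\Hr_{n-1}(S)=0$... actually one must recheck, since $\dim\Lambda^{(n)}=1$ and $\dim H_n(Q,\dd Q)=\br_{n-1}(S)$, so $\Ex^{1+}_{n,n}$ has rank $\br_{n-1}(S)$, and this entire column maps injectively away by the $\delta$-differentials, contributing via the combinatorial identity).

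\smallskip

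Then I would substitute the formula of Theorem \ref{thmBorderStruct}:
\[
\dim\Ex^\infty_{q,q}=h_q(S)+\binom{n}{q}\sum_{p=0}^{q}(-1)^{p+q}\br_p(S)-\binom{n}{q+1}\br_q(S),
\]
and the remaining task is purely combinatorial: recognize the right-hand side as $h''_q(S)$. I expect this reduces, via the standard definitions $h'_q(S)=h_q(S)+\binom{n}{q}\sum_{j=1}^{q}(-1)^{q-j}\br_{j-1}(S)$ (with appropriate reindexing/coefficients for Buchsbaum $S$, as recalled in Section \ref{secHvectors}) and $h''_q(S)=h'_q(S)-\binom{n}{q}\br_{q-1}(S)$ for $1\leqslant q\leqslant n-1$ (with $h''_0=h'_0=1$, $h''_n$ by duality), to exactly the displayed expression after collecting the $\binom{n}{q}\br_{q-1}(S)$ and $\binom{n}{q+1}\br_q(S)$ terms. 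I would verify the boundary indices $q=0$ and $q=n$ by hand, using $\br_{-1}(S)=0$ and the fact that for $q=n$ one has $\Lambda^{(n)}$ one-dimensional and the $\delta$'s all isomorphisms, so $\dim\Ex^\infty_{n,n}=h_n(S)+\sum_{p=0}^{n-1}(-1)^{p+n}\br_p(S)$ matches the known expression for $h''_n(S)$ (which equals $h'_n$ minus nothing, or is governed by the Dehn--Sommerville-type relation for Buchsbaum posets).

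\smallskip

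\textbf{The main obstacle} will be bookkeeping the exact normalization of $h'$ and $h''$ for simplicial \emph{posets} (as opposed to complexes) with the Buchsbaum reduced Betti numbers, and making sure the sign conventions and the roles of $\br_{q}$ versus $\br_{q-1}$ in the two sources — Theorem \ref{thmBorderStruct} on one side, the combinatorial definition in Section \ref{secHvectors} on the other — line up without an off-by-one error. A secondary subtlety is confirming that no \emph{other} differential touches the diagonal: one must check that differentials emanating from $\Ex^r_{p,q}$ with $p<n$ cannot land on $(q',q')$, which follows because for $p<n$ the page $\Ex^{1+}$ already has $\Ex^{1+}_{p,q}=\Eyt^2$-type terms that are concentrated in $q=0$ after truncation-collapse in the Buchsbaum case, hence carry no higher differentials at all — only the $p=n$ column is "alive" for $r\geqslant 2$, exactly as in the $\Eq$ analysis of subsection \ref{subsecSpecSeqQ}. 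Once that structural fact is in hand, the proof is a one-line spectral-sequence count plus an identity of binomial sums.
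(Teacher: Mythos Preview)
Your approach is essentially the same as the paper's: use that $Q=\cone|S|$ makes all $\delta_i$ isomorphisms, identify the single differential hitting each border term $\Ex^{1+}_{q,q}$ via Statement~\ref{stmThmSpecConvint}, subtract its (full) rank from $\dim\Ex^{1+}_{q,q}$ given by Theorem~\ref{thmBorderStruct}, and recognize the result as $h''_q$.

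However, you have committed precisely the off-by-one error you warned yourself about. You correctly identify the source of the differential into $(q,q)$ as $H_{q+1}(Q,\dd Q)\otimes\Lambda^{(q)}$, of dimension $\br_q(S)\binom{n}{q}$, but then write $\binom{n}{q+1}\br_q(S)$ in the displayed formula. With the correct coefficient $\binom{n}{q}$ one gets
\[
\dim\Ex^{\infty}_{q,q}=h_q(S)+\binom{n}{q}\sum_{p=0}^{q}(-1)^{p+q}\br_p(S)-\binom{n}{q}\br_q(S)
= h_q(S)+\binom{n}{q}\sum_{p=0}^{q-1}(-1)^{p+q}\br_p(S),
\]
which is exactly $h''_q(S)$ by \eqref{eqdefHtwoprimes}. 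With your $\binom{n}{q+1}$ the identification fails.

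Your handling of $q=n$ is also more complicated than necessary. Theorem~\ref{thmBorderStruct} does not cover $q=n$, so you cannot invoke it there. The paper's argument is direct: by \eqref{eqEXstructureFormal} the term $\Ex^{1+}_{n,n}=H_n(Q,\dd Q)\otimes\Lambda^{(n)}$ has dimension $\br_{n-1}(S)$, and it supports no nontrivial incoming or outgoing differential (any $\dx^r$ from $(n,n)$ would land above the diagonal), so it survives intact and $\dim\Ex^{\infty}_{n,n}=\br_{n-1}(S)=h'_n(S)=h''_n(S)$ by \eqref{eqHnprime}.
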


\begin{cor}
If $S$ is Buchsbaum, then $h''_i(S)\geqslant 0$.
\end{cor}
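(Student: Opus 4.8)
The plan is to compute $\dim \Ex^\infty_{q,q}$ directly from the \ang-shaped structure of $\Ex^r_{*,*}$ established in Theorem \ref{thmEX2structure} and Statement \ref{stmThmSpecConvint}, and then match the answer against the combinatorial definition of $h''_q(S)$, which (as will be recalled in section \ref{secHvectors}) reads
\[
h''_q(S) = h'_q(S) - \binom{n}{q}\sum_{i=0}^{q-1}(-1)^{q-1-i}\br_i(S)
= h_q(S) + \binom{n}{q}\left[(-1)^{q-1}\chir(|S|) - \sum_{i=0}^{q-1}(-1)^{q-1-i}\br_i(S)\right]
\]
for $0\le q\le n-1$, with the top value $h''_n(S)$ given by the appropriate boundary correction. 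Since $Q=P(S)$ and $S$ is Buchsbaum, $Q$ is a Buchsbaum pseudo-cell complex with $H_i(Q,\dd Q)\cong \Hr_{i-1}(|S|)$ and $H_i(Q)=0$ for $i>0$, $H_0(Q)=\ko$; in particular $\dd Q\simeq |S|$ homologically by Lemma \ref{lemmaUniverPosets}.

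First I would start from the extra page $\Ex^{1+}_{*,*}$ described in \eqref{eqEXstructureFormal}, whose only nonvanishing entries off the diagonal are $H_p(\dd Q)\otimes\Lambda^{(q)}$ for $q<p<n$ and the column $p=n$ given by $\bigoplus_{q_1+q_2=q+n}H_{q_1}(Q,\dd Q)\otimes\Lambda^{(q_2)}$, together with the border entries $\Ex^{1+}_{q,q}$ whose ranks are given by Theorem \ref{thmBorderStruct}. The differentials $\dx^r$ all emanate from the column $p=n$ and land either strictly below the diagonal (the case $q_1-1>q_2$) or exactly on the border (the case $q_1-1=q_2$, landing in $\Ex^*_{q_1-1,q_1-1}$ via the injection $f_*^{1+}$). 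Because $S$ is Buchsbaum and $Q=P(S)$, \emph{every} proper face is contractible, so $H_{q_1}(Q,\dd Q)$ feeds $\delta_{q_1}$ which, by the long exact sequence of $(Q,\dd Q)$ with $H_*(Q)=0$ in positive degrees, is an \emph{isomorphism} onto $H_{q_1-1}(\dd Q)$ for $2\le q_1\le n-1$ and injective for $q_1=1$; only $\delta_n$ can fail to be injective. This is the crucial simplification of the cone case over the general manifold case.

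Next I would run the spectral sequence to $\infty$ on the diagonal. For fixed $q\le n-1$, the term $\Ex^{1+}_{q,q}$ receives exactly one incoming differential, namely $\dx^{n-q}$ from $H_{q+1}(Q,\dd Q)\otimes\Lambda^{(q)}$ composed with $\delta_{q+1}\otimes\id$; since $\delta_{q+1}$ is injective onto $H_q(\dd Q)$ and $f_*^{1+}\colon H_q(\dd Q)\otimes\Lambda^{(q)}\hookrightarrow\Ex^2_{q,q}$ is the given injection, this incoming map is injective. The diagonal entry $\Ex^{1+}_{q,q}$ sends no differential out (everything above the diagonal vanishes and $\dx^r$ lowers $p$ by $r\ge 2$ while raising $q$, so from $(q,q)$ it would go to $(q-r,q+r-1)$, which lies strictly above the diagonal and hence is zero). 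Therefore $\Ex^\infty_{q,q}=\coker\bigl(\dx^{n-q}\big|_{\text{into }(q,q)}\bigr)$, which has dimension $\dim\Ex^{1+}_{q,q} - \dim\bigl(\delta_{q+1}(H_{q+1}(Q,\dd Q))\otimes\Lambda^{(q)}\bigr) = \dim\Ex^{1+}_{q,q} - \binom{n}{q}\br_q(S)$ for $q\le n-2$; for $q=n-1$ the incoming differential is $\dx^1$ and I would treat it analogously using $\delta_n$. For $q=n$ the only contribution to $\Ex^{1+}_{n,n}$ is $H_n(Q,\dd Q)\otimes\Lambda^{(n)}$, all of whose differentials $\dx^r=\delta_{n+1-r}\otimes\id$ are injective for $r<n$ (isomorphisms $\delta_{n+1-r}$) and $\dx^n=\delta_1\otimes\id$ is injective, so $\Ex^\infty_{n,n}=0$ unless these differentials are not defined — I would need to check the bookkeeping at the top degree carefully; the expected answer is that $\dim\Ex^\infty_{n,n}=h''_n(S)$ which in the Buchsbaum cone case equals $\dim H_n(Q,\dd Q)\otimes\Lambda^{(n)}$ minus the contribution of $\delta_n$, i.e. $\dim\coker\delta_n = \br_{n-1}(S)$ adjusted, matching the combinatorial $h''_n$.

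Finally I would assemble the arithmetic: plugging Theorem \ref{thmBorderStruct}, $\dim\Ex^{1+}_{q,q}=h_q(S)+\binom{n}{q}\sum_{p=0}^q(-1)^{p+q}\br_p(S)$, into $\dim\Ex^\infty_{q,q}=\dim\Ex^{1+}_{q,q}-\binom{n}{q}\br_q(S)$ gives
\[
\dim\Ex^\infty_{q,q} = h_q(S)+\binom{n}{q}\sum_{p=0}^{q-1}(-1)^{p+q}\br_p(S),
\]
which is exactly the standard formula for $h''_q(S)$ of a Buchsbaum simplicial poset (to be recorded in section \ref{secHvectors}); comparing term by term finishes the proof, and the corollary $h''_i(S)\ge 0$ is then immediate since a dimension of a $\ko$-module is nonnegative. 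The main obstacle I anticipate is purely bookkeeping at the two ends $q=n-1$ and $q=n$: there the incoming differential into the border is $\dx^1$ (via $\dx^{1+}$) rather than a higher $\dx^r$, $\delta_n$ need not be injective, and the column $p=n$ term at $q=n$ interacts with $\Lambda^{(n)}$ being one-dimensional; I would handle these by carefully tracking $\coker\delta_n$ through the exact-sequence folding of \ang-shaped spectral sequences exactly as in subsection \ref{subsecSpecSeqQ}, tensored with $\Lambda$, rather than by any new idea.
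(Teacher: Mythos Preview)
Your approach to Theorem \ref{thmHtwoPrimes} is essentially the paper's: use that in the cone case $Q=P(S)$ all $\delta_i$ are isomorphisms onto $\Hr_{i-1}(\dd Q)$, so the only differential hitting $\Ex^{1+}_{q,q}$ is injective with image of rank $\binom{n}{q}\br_q(S)$, and then subtract this from Theorem \ref{thmBorderStruct} to get $h''_q$. Two corrections to your endpoint bookkeeping, and one genuine gap in the corollary.

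First, your claim that ``only $\delta_n$ can fail to be injective'' is backwards: since $Q=\cone|S|$ is contractible, $H_i(Q)=0$ for all $i>0$, so $\delta_n\colon H_n(Q,\dd Q)\to \Hr_{n-1}(\dd Q)$ is an isomorphism just like the others. Second, at $q=n$ there are \emph{no} outgoing differentials from $\Ex^{1+}_{n,n}$: the piece $H_n(Q,\dd Q)\otimes\Lambda^{(n)}$ has $q_1-1=n-1<n=q_2$, so by the ``otherwise'' clause in Statement \ref{stmThmSpecConvint} the differential vanishes, and the term survives intact. Hence $\dim\Ex^{\infty}_{n,n}=\dim H_n(Q,\dd Q)=\br_{n-1}(S)=h'_n(S)=h''_n(S)$ directly, with no $\coker\delta_n$ correction needed. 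The paper handles both of these cleanly in one line each.

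The genuine gap is in the corollary itself. You write that $h''_i(S)\geqslant 0$ is ``immediate since a dimension of a $\ko$-module is nonnegative,'' but Theorem \ref{thmHtwoPrimes} presupposes the existence of a space $X=(P(S)\times T^n)/\simc$, which in turn requires a characteristic map $\lambda$ satisfying the $\sta{\ko}$-condition. For an arbitrary Buchsbaum poset $S$ no such map need exist over $\Zo$ or over a given finite field. The paper's proof of the corollary consists precisely of the observation that over $\ko=\Qo$ a characteristic map always exists (generic rational vectors will satisfy the determinant condition at every simplex), so that $X$ can be formed and Theorem \ref{thmHtwoPrimes} applied. Without this step the corollary does not follow.
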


\begin{proof}
For any $S$ there exists a characteristic map over $\Qo$. Thus
there exists a space $X=(P(S)\times T^n)/\simc$ and Theorem
\ref{thmHtwoPrimes} applies.
\end{proof}

\subsection{Homology of $X$}
Theorem \ref{thmEX2structure} implies the additional grading on
$H_*(S)$ --- the one given by the degrees of exterior forms. It is
convenient to work with this double grading.

\begin{con}
Suppose $Y=Q\times T^n$. For $j\in[0,n]$ consider the \ang-shaped
spectral sequence
\[
\Eyj^r_{*,*}=\Eq^r_{*,*}\otimes \Lambda^{(j)}.
\]
Clearly, $\Ey^r_{*,*}=\bigoplus_{j=0}^n\Eyj^r_{*,*}$ and
$\Eyj^*_{p,q}\Rightarrow H_{p+q-j,j}(Y)\eqd
H_{p+q-j}(Q)\otimes\Lambda^{(j)}$. In particular,
\[
\Eyj^{1+}_{*,*}=\bigoplus_{p<n}\left(\Eq^{1+}_{p,0}
\otimes\Lambda^{(j)}\right)\oplus\bigoplus_q
\left(\Eq^{1+}_{n,q}\otimes \Lambda^{(j)}\right)
\]
Consider the corresponding \ang-shaped spectral subsequences in
$\Ex^*_{*,*}$. Start with the $\ko$-modules:
\[
\Exj^{1+}_{*,*}=\bigoplus_{p<n}\Ex^{1+}_{p,j}\oplus\bigoplus_q
f_*^{1+}\left(\Eq^{1+}_{n,q}\otimes \Lambda^{(q)}\right).
\]
By statement \ref{stmThmSpecConvint}, all the differentials of
$\Ex^*_{*,*}$ preserve $\Exj^*_{*,*}$, thus the spectral
subsequences $\Exj^r_{*,*}$ are well defined, and
$\Ex^r_{*,*}=\bigoplus_{j=0}^n\Exj^r_{*,*}$. Let $H_{i,j}(X)$ be
the family of subgroups of $H_*(X)$ such that
$\Exj^r_{p,q}\Rightarrow H_{p+q-j,j}(X)$. Then
\[
H_k(X)=\bigoplus_{i+j=k}H_{i,j}(X)
\]
and the map $f_*\colon H_*(Y)\to H_*(X)$ sends $H_{i,j}(Y)\cong
H_i(Q)\otimes \Lambda^{(j)}$ to $H_{i,j}(X)$. The map $f_*^r\colon
\Ey^r\to\Ex^r$ sends $\Eyj^r$ to $\Exj^r$ for each
$j\in\{0,\ldots,n\}$ and we have commutative squares:
\[
\xymatrix{ \Eyj^r_{p,q}\ar@{=>}[r]\ar@{->}[d]^{f^r_*}&
H_{p+q-j,j}(Y)\ar@{->}[d]^{f_*}\\
\Exj^r_{p,q}\ar@{=>}[r]&H_{p+q-j,j}(X)}
\]
\end{con}

\begin{prop}\label{propHomologyX}\mbox{}

\begin{enumerate}
\item If $i>j$, then $f_*\colon H_{i,j}(Y)\to
H_{i,j}(X)$ is an isomorphism. In particular, $H_{i,j}(X)\cong
H_i(Q)\otimes\Lambda^{(j)}$.

\item If $i<j$, then there exists an isomorphism $H_{i,j}(X)\cong H_i(Q,\dd
Q)\otimes \Lambda^{(j)}$.

\item In case $i=j<n$, the module $H_{i,i}(X)$ fits in the exact sequence
\[
0\rightarrow\Ex^{\infty}_{i,i}\rightarrow H_{i,i}(X)\rightarrow
H_i(Q,\dd Q)\otimes\Lambda^{(i)} \rightarrow 0,
\]
or, equivalently,
\[
0\rightarrow \im\delta_{i+1}\otimes \Lambda^{(i)}\rightarrow
\Ex^{1+}_{i,i}\rightarrow H_{i,i}(X)\rightarrow H_i(Q,\dd
Q)\otimes\Lambda^{(i)}\rightarrow 0
\]

\item If $i=j=n$, then $H_{n,n}(X)=\Ex^{\infty}_{n,n}=\Ex^1_{n,n}$.
\end{enumerate}
\end{prop}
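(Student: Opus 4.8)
The plan is to read off each case directly from the \ang-shaped spectral subsequences $\Exj^r_{*,*}$ and the description of their terms and differentials given in statement \ref{stmThmSpecConvint}, exploiting that each $\Exj^*_{*,*}$ converges to $H_{*-j,j}(X)$. For item (1), when $i>j$ the module $H_{i,j}(X)$ is assembled from $\Exj^\infty_{p,q}$ with $p+q-j = i$, i.e. $p+q = i+j$; since $q = j$ forces $p = i > j = q$, the only relevant entry is $\Exj^\infty_{i,j}$, which lies strictly below the border, so no differential into or out of it survives past the first page and $\Exj^\infty_{i,j} = \Ex^{1+}_{i,j} \cong H_i(\dd Q)\otimes\Lambda^{(j)}$ when $i<n$, while the case $i=n$ is handled by remark \ref{remXYsameForPeqN}. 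Combined with the commuting square relating $\Eyj^r$ to $\Exj^r$ and the fact that $f_*^{1+}\colon\Eyj^{1+}_{i,j}\to\Exj^{1+}_{i,j}$ is an isomorphism for $i>j$ (Theorem \ref{thmTwoSpecSeqGeneral}), we get $f_*\colon H_{i,j}(Y)\to H_{i,j}(X)$ is an isomorphism, and $H_{i,j}(Y) = H_i(Q)\otimes\Lambda^{(j)}$ by triviality of the bundle.

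For item (2), when $i<j$ the contributing entries $\Exj^\infty_{p,q}$ with $p+q = i+j$ and $q\leqslant p$ force $p = n$ (since for $p<n$ we would need $q=j>p\geqslant i$, but entries with $q>p$ vanish, and $q=j$ is excluded as $j>i$ would put us above the border region that survives — here I must be careful: the relevant border term $\Ex^{1+}_{j,j}$ is pushed up in filtration $j$ only in the guise $H_j(Q,\dd Q)\otimes\Lambda^{(j)}$, not $H_j(\dd Q)$). The one surviving term is the summand $H_{q_1}(Q,\dd Q)\otimes\Lambda^{(q_2)}$ of $\Ex^{1+}_{n,*}$ with $q_2 = j$ and $q_1 = i$; the differential $\dx^{n-q_1+1}$ out of it lands in $\Ex^*_{q_1-1,q_2} = H_{i-1}(\dd Q)\otimes\Lambda^{(j)}$, but since $q_1-1 = i-1 < i < j = q_2$ we are in the regime $q_1-1 < q_2$ where, inspecting the differential formula, $\dx^r$ is the map $\delta_{q_1}\otimes\id$ into $\Ex^*_{q_1-1,q_2}$ — however that target has $q_1 - 1 < q_2$, hence lies above the border and vanishes. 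So the differential is zero, no differential hits this term either, and $H_{i,j}(X) \cong H_i(Q,\dd Q)\otimes\Lambda^{(j)}$.

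For item (3), with $i=j<n$, two entries of $\Exj^\infty$ contribute to $H_{i,i}(X)$: the border term $\Exj^\infty_{i,i} = \Ex^\infty_{i,i}$ (filtration degree $i$, internal degree $i$) and the summand $H_i(Q,\dd Q)\otimes\Lambda^{(i)}$ sitting in $\Ex^{1+}_{n,*}$. I claim the latter survives to $E^\infty$ as a quotient: its outgoing differential $\dx^{n-i+1}$ targets $H_{i-1}(\dd Q)\otimes\Lambda^{(i)}$, which has $q_1-1 = i-1 < i = q_2$, hence (as in case (2)) vanishes; and the filtration of $H_{i,i}(X)$ has these two associated graded pieces with $\Ex^\infty_{i,i}$ the lower one, giving the short exact sequence $0\to\Ex^\infty_{i,i}\to H_{i,i}(X)\to H_i(Q,\dd Q)\otimes\Lambda^{(i)}\to 0$. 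For the equivalent four-term sequence, splice in $0\to\im\delta_{i+1}\otimes\Lambda^{(i)}\to\Ex^{1+}_{i,i}\to\Ex^\infty_{i,i}\to 0$: the only differential hitting $\Ex^*_{i,i}$ is $\dx^{n-i}$ from $\Ex^*_{n,2i-n}$, which by the differential formula is $\delta_{i+1}\otimes\id$ followed by $f_*^{1+}$ (case $q_1-1 = q_2 = i$), with image $f_*^{1+}(\im\delta_{i+1}\otimes\Lambda^{(i)})$; and the outgoing differentials from $\Ex^*_{i,i}$ vanish since it is a border term. Finally item (4): for $i=j=n$ the only contributing entry is $\Ex^{1+}_{n,n}\cong H_n(Q,\dd Q)\otimes\Lambda^{(n)} = \Ex^1_{n,n}$, and by Theorem \ref{thmEX2structure} (or statement \ref{stmThmSpecConvint}) it is already the $E^2 = E^\infty$ term: no differential leaves it ($q=p=n$ is extremal) and the only potential incoming differential would come from a nonexistent entry with $p>n$.

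The main obstacle is the bookkeeping in cases (2) and (3): one must verify carefully that every differential with source or target among the relevant entries vanishes, which comes down to repeatedly checking the inequality $q_1 - 1 < q_2$ against the three-case differential formula of statement \ref{stmThmSpecConvint} and confirming that the target then lies in the vanishing region $\{q>p\}$. The rest is a routine unwinding of the filtration on $H_*(X)$ into its associated graded, together with the already-established isomorphism $f_*^r$ of statement \ref{stmThmSpecConvint} and the triviality identifications for $Y = Q\times T^n$.
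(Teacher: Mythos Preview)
Your arguments for items (2), (3), and (4) are essentially correct and match the paper's approach: read off the surviving $E^\infty$ terms of the \ang-shaped subsequence $\Exj^*_{*,*}$ using the explicit description of the differentials in statement \ref{stmThmSpecConvint}, then reassemble $H_{i,j}(X)$ from its associated graded.

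Your argument for item (1), however, has a genuine gap. You assert that for $i>j$ the only entry of $\Exj^\infty$ contributing to $H_{i,j}(X)$ is $\Exj^\infty_{i,j}$, that no differential hits it, and hence $\Exj^\infty_{i,j}=\Ex^{1+}_{i,j}\cong H_i(\dd Q)\otimes\Lambda^{(j)}$. All three claims fail when $j<i<n$. First, the \ang-shape of $\Exj$ has a vertical column at $p=n$, so the entry $\Exj^{1+}_{n,\,i+j-n}\cong H_i(Q,\dd Q)\otimes\Lambda^{(j)}$ also lies in total degree $i+j$ and contributes to $H_{i,j}(X)$; you have simply forgotten it. Second, the differential $d^{n-i}\colon\Exj^{n-i}_{n,\,i+j-n+1}\to\Exj^{n-i}_{i,j}$ is $\delta_{i+1}\otimes\id_{\Lambda^{(j)}}$ by statement \ref{stmThmSpecConvint}, which is nonzero in general; hence $\Exj^\infty_{i,j}$ is only the cokernel of this map. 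Third, and most tellingly, your conclusion $H_i(\dd Q)\otimes\Lambda^{(j)}$ is the wrong group: the statement asserts $H_i(Q)\otimes\Lambda^{(j)}$, and these differ whenever the pair $(Q,\dd Q)$ has nontrivial connecting maps. Your closing sentence about the ``commuting square'' gestures toward the right fix but does not carry it out; an isomorphism at the single entry $(i,j)$ of $E^{1+}$ says nothing about the limit.

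The paper's proof of (1) is different and avoids this trap. Since both $\Eyj^*$ and $\Exj^*$ are \ang-shaped, each folds into a long exact sequence (as in claim \ref{claimFormalPage}), and $f_*$ gives a map of long exact sequences. In the five-term window centred on $H_{i,j}$, the four outer vertical maps are: isomorphisms on the two column entries (always), an isomorphism at $\Exj^{1+}_{i,j}$ (since $i>j$), and at worst injective at $\Exj^{1+}_{i-1,j}$ (since $i-1\geqslant j$), which is exactly what the five lemma needs. This yields $f_*\colon H_{i,j}(Y)\to H_{i,j}(X)$ an isomorphism directly, without ever computing the individual $E^\infty$ terms.
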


\begin{proof}
According to statement \ref{stmThmSpecConvint}
\begin{equation}\label{eqLongGradeIso}
f_*^{1+}\colon\Eyj^{1+}_{i,q}\to\Exj^{1+}_{i,q} \mbox{ is
}\begin{cases} \mbox{the isomorphism if }i>j \mbox{ or }i=j=n;\\
\mbox{injective if }i=j.
\end{cases}
\end{equation}
For each $j$ both spectral sequences $\Eyj$ and $\Exj$ are
\ang-shaped, thus fold in the long exact sequences:
\begin{equation}\label{eqTwoLongGrad}
\xymatrix{\ldots\ar@{->}[r]&\Eyj^{1+}_{i,j}\ar@{->}[r]\ar@{->}[d]^{f_*}&
H_{i,j}(Y)\ar@{->}[r]\ar@{->}[d]^{f_*}&\Eyj^{1+}_{n,i-n+j}
\ar@{->}[r]^{\dy^{n-i+1}}\ar@{->}[d]^{f_*}_{\cong}&
\Eyj^{1+}_{i-1,j}\ar@{->}[r]\ar@{->}[d]^{f_*}&\ldots\\
\ldots\ar@{->}[r]&\Exj^{1+}_{i,j}\ar@{->}[r]&
H_{i,j}(X)\ar@{->}[r]&\Exj^{1+}_{n,i-n+j}\ar@{->}[r]^{\dx^{n-i+1}}&
\Exj^{1+}_{i-1,j}\ar@{->}[r]&\ldots}
\end{equation}
Applying five lemma in the case $i>j$ proves (1). For $i<j$, the
groups $\Exj^{1+}_{i,j}$, $\Exj^{1+}_{i-1,j}$ vanish by
dimensional reasons thus $H_{i,j}(X) \cong \Exj^{1+}_{n,i-n+j}
\cong \Eyj^{1+}_{n,i-n+j} \cong H_i(Q,\dd Q)\otimes\Lambda^{(j)}$.
Case $i=j$ also follows from \eqref{eqTwoLongGrad} by a simple
diagram chase.
\end{proof}

In the manifold case proposition \ref{propHomologyX} reveals a
bigraded duality. If $Q$ is a nice manifold with corners,
$Y=Q\times T^n$ and $\lambda$ is a characteristic map over $\Zo$,
then $X$ is a manifold with locally standard torus action. In this
case Poincare duality respects the double grading.

\begin{prop}\label{propXmanifoldDuality}
If $X=(Q\times T^n)/\simc$ is a manifold with locally standard
torus action and $\ko$ is a field, then $H_{i,j}(X)\cong
H_{n-i,n-j}(X)$.
\end{prop}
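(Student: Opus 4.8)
The plan is to combine ordinary Poincaré duality on the $2n$-dimensional manifold $X$ with the explicit description of the bigraded pieces $H_{i,j}(X)$ obtained in Proposition \ref{propHomologyX}, checking that the duality isomorphism is compatible with the double grading. First I would recall that the double grading on $H_*(X)$ is defined by the decomposition $\Ex^r_{*,*}=\bigoplus_{j=0}^n\Exj^r_{*,*}$, where $\Exj^r_{*,*}$ is the $\ang$-shaped spectral subsequence associated with $\Eyj^r=\Eq^r\otimes\Lambda^{(j)}$; here $\Lambda^{(j)}=H_j(T^n)$ carries the exterior-algebra grading. So the statement $H_{i,j}(X)\cong H_{n-i,n-j}(X)$ really says: the classical Poincaré isomorphism $H_k(X)\cong H^{2n-k}(X)\cong H_{2n-k}(X)$ (the last step using that $\ko$ is a field, so homology and cohomology are dual, and that $X$ is an orientable manifold — orientability over a field with $\mathrm{char}=2$ being automatic, and over other fields following from the fact that the $T^n$-action gives a reduction of structure) sends the summand $H_{i,j}(X)$ isomorphically onto $H_{n-i,n-j}(X)$, where we match total degrees via $(i+j)+((n-i)+(n-j))=2n$.

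The key step is therefore to identify the grading on $H_*(X)$ intrinsically, in a way that manifestly dualizes. I would use the $\Lambda$-module structure: $H_*(X)$ is a module over $\Lambda=H_*(T^n;\ko)$, and the second grading index $j$ is the "exterior degree" in the sense that, after passing to the associated graded of the orbit-type filtration, $\Exj$ is exactly the part where $\Lambda$ acts through $\Lambda^{(j)}$. The cleanest route is to note that Poincaré duality on $X$ is $T^n$-equivariant (the fundamental class is $T^n$-invariant), hence it intertwines the $\Lambda$-actions up to the canonical duality $\Lambda^{(j)}\cong\Lambda^{(n-j)}$ coming from $T^n$ being an $n$-torus (Poincaré duality on $T^n$ itself: $H_j(T^n)\cong H^{n-j}(T^n)\cong H_{n-j}(T^n)$, compatibly with the $\Lambda$-module structure). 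Concretely: cap product with $[X]$ sends a class of $\Lambda$-exterior degree $j$ and "geometric degree" $i$ to one of exterior degree $n-j$ and geometric degree $n-i$, because capping with the invariant fundamental class shifts the $T^n$-weight data by the top class of the fiber torus. I would make this precise either by filtering $X$ equivariantly and comparing the orbit-type filtration with its "Poincaré dual" filtration (the filtration by the closures of orbits of each dimension, read from the top), or — more economically — by invoking that $f_\ast\colon H_{i,j}(Y)\to H_{i,j}(X)$ is an isomorphism for $i>j$ (Proposition \ref{propHomologyX}(1)), where on $Y=Q\times T^n$ the bigrading is literally the Künneth bigrading $H_i(Q)\otimes\Lambda^{(j)}$, so the claim is transparent there, and then transport the statement across the small-dimensional range $i\le j$ using the exact sequences \eqref{eqTwoLongGrad} together with Poincaré–Lefschetz duality $H_i(Q,\dd Q)\cong H^{n-i}(Q)\cong H_{n-i}(\dd Q)$ for the manifold-with-corners $Q$.

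Assembling this, the proof would proceed case by case along the four cases of Proposition \ref{propHomologyX}. In the range $i>j$ we have $H_{i,j}(X)\cong H_i(Q)\otimes\Lambda^{(j)}$ and $H_{n-i,n-j}(X)$ falls in the range $n-i<n-j$, so by Proposition \ref{propHomologyX}(2) it is $\cong H_{n-i}(Q,\dd Q)\otimes\Lambda^{(n-j)}$; Poincaré–Lefschetz duality on $Q$ gives $H_i(Q)\cong H_{n-i}(Q,\dd Q)$ (since $Q$ is a compact orientable $n$-manifold with corners, hence with boundary up to homology), and $\Lambda^{(j)}\cong\Lambda^{(n-j)}$, which matches. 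The diagonal cases $i=j<n$ and $i=j=n$ are handled by checking that the short exact sequence in Proposition \ref{propHomologyX}(3) is self-dual under these identifications — here $\Ex^{\infty}_{i,i}$ must be shown Poincaré-self-dual, which follows once one knows the whole duality statement for $i\neq j$ by a five-lemma argument applied to the dual of the exact sequence. I expect the main obstacle to be exactly this last bookkeeping point: verifying that the classical Poincaré isomorphism is filtration-compatible in the sharp way needed on the diagonal, i.e. that it does not mix $\Ex^\infty_{i,i}$ with the other graded pieces of the same total degree; the natural fix is to observe that the orbit-type filtration and its "top-down" reverse are interchanged by Poincaré duality (the $2i$-skeleton $X_i$ is Poincaré dual to the subspace swept out by orbits of codimension $\ge$ something), so the associated-graded map is well-defined and is precisely the asserted bigraded isomorphism.
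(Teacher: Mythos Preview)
Your treatment of the off-diagonal case $i\neq j$ is exactly the paper's argument: for $i<j$ one has $H_{i,j}(X)\cong H_i(Q,\dd Q)\otimes\Lambda^{(j)}$ by Proposition~\ref{propHomologyX}(2) and $H_{n-i,n-j}(X)\cong H_{n-i}(Q)\otimes\Lambda^{(n-j)}$ by Proposition~\ref{propHomologyX}(1), and these match via Poincar\'e--Lefschetz on $Q$ and Poincar\'e duality on $T^n$. (A small slip: you wrote $H^{n-i}(Q)\cong H_{n-i}(\dd Q)$; it should be $H_{n-i}(Q)$.)

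Where you diverge from the paper is the diagonal case $i=j$. You propose to show filtration-compatibility of Poincar\'e duality, or to run a five-lemma on the short exact sequences of Proposition~\ref{propHomologyX}(3). Neither is needed, and the five-lemma route as you sketch it does not obviously close: the sequence for $H_{n-i,n-i}(X)$ has right-hand term $H_{n-i}(Q,\dd Q)\otimes\Lambda^{(n-i)}$, which Poincar\'e--Lefschetz matches with $H_i(Q)\otimes\Lambda^{(i)}$, not with $H_i(Q,\dd Q)\otimes\Lambda^{(i)}$, so the two sequences are not term-by-term dual in the way your argument requires. The paper's fix is a one-line dimension count: since $\ko$ is a field, ordinary Poincar\'e duality on the closed oriented $2n$-manifold $X$ gives $\dim H_{2i}(X)=\dim H_{2n-2i}(X)$; in the decomposition $H_{2i}(X)=\bigoplus_{a+b=2i}H_{a,b}(X)$ every off-diagonal summand $H_{a,b}$ with $a\neq b$ has already been paired with $H_{n-a,n-b}$ in total degree $2n-2i$, so subtracting yields $\dim H_{i,i}(X)=\dim H_{n-i,n-i}(X)$, hence an isomorphism of vector spaces. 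No compatibility of the cap product with the orbit-type filtration is ever invoked.
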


\begin{proof}
If $i<j$, then $H_{i,j}(X)\cong H_i(Q,\dd
Q)\otimes\Lambda^{(j)}\cong H_{n-i}(Q)\otimes\Lambda^{(n-j)}\cong
H_{n-i,n-j}(X)$, since $H_i(Q,\dd Q)\cong H_{n-i}(Q)$ by the
Poincare--Lefschetz duality and $H_j(T^n)\cong H_{n-j}(T^n)$ by
the Poincare duality for the torus. The remaining isomorphism
$H_{i,i}(X)\cong H_{n-i,n-i}(X)$ now follows from the ordinary
Poincare duality for $X$.
\end{proof}

\begin{rem}\label{remOrbifoldDuality}
If the space $X=(Q\times T^n)/\simc$ is constructed from a
manifold with corners $(Q,\dd Q)$ using characteristic map over
$\Qo$ (i.e. $X$ is a toric orbifold), then proposition
\ref{propXmanifoldDuality} still holds over $\Qo$.
\end{rem}

%

\section{Duality between certain cellular sheaves and cosheaves}\label{SecKeyLemma}

\subsection{Proof of the Key lemma}
In this section we prove lemma \ref{lemmaKeyLemma}. First recall
the setting.
\begin{itemize}
\item $Q$ : a Buchsbaum pseudo-cell complex with the underlying simplicial
poset $S=S_Q$ (this poset is Buchsbaum itself by corollary
\ref{corQbuchSbuch}).

\item $\hh_0$ : the structure sheaf on
$S$; $\hh_0(J)=H_{\dim F_J}(F_J,\dd F_J)$.

\item $\La$ : a locally constant graded sheaf on $S$ valued by exterior
algebra $\Lambda=H_*(T^\ld)$. This sheaf is associated in a
natural way to a principal $T^\ld$-bundle over $Q$,
$\La(J)=H_*(T^\ld(J))$ for $J\neq \varnothing$ and
$\La(\varnothing)=0$. By inverting all restriction maps we obtain
the cosheaf $\Lah$.

\item $\lambda$ : a characteristic map over $S$. It determines
$T^1$-subgroup $\tr_{i\leqslant J}(\lambda(i))\subset T^\ld(J)$
for each simplex $J$ with vertex $i$. The homology class of this
subgroup is denoted $\omega_i\in \La(I)$ (see
\eqref{eqDefOmegaForm}). Note that the restriction isomorphism
$\La(J_1<J_2)$ sends $\omega_i\in \La(J_1)$ to
$\omega_i\in\La(J_2)$. Thus we simply write $\omega_i$ for all
such elements since the ambient exterior algebra will be clear
from the context.

\item $\I$ : the sheaf of ideals, associated to $\lambda$. The
value of $\I$ on a simplex $J\neq\varnothing$ with vertices
$\{i_1,\ldots,i_k\}$ is the ideal
$\I(J)=(\omega_{i_1},\ldots,\omega_{i_k})\subset \La(J)$. Clearly,
$\I$ is a graded subsheaf of $\La$.
\end{itemize}

We now introduce another type of ideals.

\begin{con}
Let $J=\{i_1,\ldots,i_k\}$ be a nonempty subset of vertices of
simplex $I\in S$. Consider the element $\pi_J\in\La(I)$,
$\pi_J=\bigwedge_{i\in J}\omega_i$. By the definition of
characteristic map, the elements $\omega_i$ are linearly
independent, thus $\pi_J$ is a non-zero $|J|$-form. Let
$\Pi_J\subset \La(I)$ be the principal ideal generated by $\pi_J$.
The restriction maps $\La(I<I')$ identify $\Pi_J\subset \La(I)$
with $\Pi_J\subset \La(I')$.

In particular, when $J$ is the whole set of vertices of a simplex
$I\neq\varnothing$ we define $\Pih(I)\eqd\Pi_I\subset\Lah(I)$. If
$I'<I$, then the corestriction map $\Lah(I>I')=\La(I'<I)^{-1}$
injects $\Pih(I)$ into $\Pih(I')$, since $\Lah(I>I')\pi_I$ is
divisible by $\pi_{I'}$. Thus $\Pih$ is a well-defined graded
subcosheaf of $\Lah$. Formally set $\Pih(\varnothing)=0$.
\end{con}

\begin{thm}\label{thmDuality}
For Buchsbaum pseudo-cell complex $Q$ and $S=S_Q$ there exists an
isomorphism $H^k(S;\hh_0\otimes\I)\cong H_{n-1-k}(S;\Pih)$ which
respects the gradings of $\I$ and $\Pih$.
\end{thm}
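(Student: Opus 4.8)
The plan is to identify both sides with the (co)homology of a single explicit differential complex, thereby converting the statement into an isomorphism of complexes (up to reindexing) rather than a more abstract derived-category argument. First I would fix the sign convention on $S$ and a characteristic map $\lambda$, and recall that, since $Q$ is Buchsbaum, all proper faces $F_I$ are Cohen--Macaulay, so by \eqref{eqCollapsingAtCMFace} the restriction of $\hh_0$ to any proper link $\lk_S I$ (with $I\neq\varnothing$) has cohomology concentrated in the single degree $\dim F_I-1$, and $\hh_0$ itself is a sheaf whose only potentially non-acyclic local contribution comes from $\varnothing$. The next step is a direct analysis of the sheaf $\hh_0\otimes\I^{(q)}$: on a simplex $J$ with vertex set $\{i_1,\dots,i_k\}$ its value is $H_{\dim F_J}(F_J,\dd F_J)\otimes(\omega_{i_1},\dots,\omega_{i_k})^{(q)}$, and the ideal $(\omega_{i_1},\dots,\omega_{i_k})$ in the exterior algebra has a well-understood structure; in particular, when $|J|=k\geq q$ the degree-$q$ part of the ideal is the whole of $\Lambda^{(q)}$, while for small $k$ it is a proper subspace cut out by linear forms. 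This is the combinatorial input that makes the Key Lemma vanishing range $i\le n-1-q$ plausible, and it is also what will pair off against the cosheaf $\Pih$, whose value on a top simplex $I$ is the principal ideal generated by the $|I|$-form $\pi_I=\bigwedge_{i\in I}\omega_i$.

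The heart of the argument is to exhibit, for each simplex $I$, a perfect pairing between the relevant pieces. Concretely, I would use the nondegenerate pairing on the exterior algebra $\Lambda=\Lambda[V]$ with $V=H_1(T^\ld)$ (contraction/Poincaré duality for the torus, $\Lambda^{(q)}\otimes\Lambda^{(\ld-q)}\to\Lambda^{(\ld)}\cong\ko$), restricted appropriately: inside $\La(I)$ the ideal $\I(I)^{(q)}=(\omega_{i_1},\dots,\omega_{i_k})^{(q)}$ and the principal ideal $\Pi_I=\pi_I\Lambda$ should be exact annihilators of each other under this pairing, up to the expected degree shift by $|I|$ and $\ld$. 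Tensoring with the $1$-dimensional spaces $\hh_0(I)=H_{\dim F_I}(F_I,\dd F_I)$ (which, for $Q$ Buchsbaum, come with compatible generators once orientations are fixed — this is the content of Remark \ref{remLocalSheavesOnPoset} and the Poincaré--Lefschetz discussion around Definition \ref{defBuchsCell}) turns this into a componentwise perfect pairing $\Cc^i(S;\hh_0\otimes\I^{(q)})\otimes \Cc_{i'}(S;\Pih^{(\cdot)})\to\ko$ with $i+i'=n-1$ after the index change $\dim F_J = n-1-\dim J$. The remaining point is that the sheaf differential $d$ on the left and the cosheaf differential $d$ on the right are adjoint with respect to this pairing; this follows from the compatibility of $[J':J]$ (same sign convention on both sides), the fact that the restriction maps of $\La$ fix each $\omega_i$ and hence fix $\pi_J$ up to the obvious divisibility, and the duality between contraction against $\omega_i$ (which appears in the boundary map of $\Pih$) and wedging with $\omega_i$ (which appears in $d$ on $\hh_0\otimes\I$). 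Adjointness of the differentials then gives $H^k(S;\hh_0\otimes\I)\cong H_{n-1-k}(S;\Pih)$ by the usual finite-dimensional linear-algebra duality between a complex and its dual, and the grading is respected because the exterior-algebra pairing shifts $q\mapsto \ld-q$ uniformly — one matches the graded component of $\I$ of internal degree $q$ with that of $\Pih$ of degree $\ld-q$, equivalently re-indexes $\Pih$ by its codegree so that the two gradings agree on the nose as claimed.

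The main obstacle I anticipate is verifying the exact-annihilator claim $\I(I)^{(q)} = \mathrm{Ann}(\Pi_I)$ inside each $\La(I)$ and checking that it is natural in $I$, i.e. compatible with all restriction/corestriction maps. The linearly independent forms $\omega_{i_1},\dots,\omega_{i_k}$ span a subspace $W\subset V$ of dimension $k=|I|$, and one needs the clean identity that the ideal generated by $W$ in $\Lambda[V]$ is precisely the annihilator (under the pairing coming from a chosen volume form on $V$) of the principal ideal generated by $\bigwedge W$; this is a standard fact about exterior algebras but has to be stated carefully with the degree bookkeeping, and the subtlety is that passing from a face $I$ to a coface $I'$ enlarges the vertex set, so $W$ grows, the ideal $\I(I')$ grows, and $\pi_{I'}$ becomes more divisible — one must see that the pairing is compatible with these inclusions, which is exactly why $\Pih$ was set up as a cosheaf (arrows reversed) rather than a sheaf. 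A secondary technical point is pinning down the generators of the one-dimensional groups $\hh_0(I)$ so that the componentwise pairing is honestly defined over $\ko$ and not just up to scalar; here I would invoke the orientation conventions developed in Section \ref{secEquivarGeometry} (or, in the cone case $Q=P(S)$, the explicit fundamental classes of the cones $F_I$), so that $\hh_0\cong\hht_0$ away from $\varnothing$ behaves like a line bundle trivialized compatibly with the sign convention. Once these two points are settled, the rest is formal.
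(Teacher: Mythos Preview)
Your approach has a genuine gap: the componentwise pairing you describe does not exist. You propose pairing $\Cc^i(S;\hh_0\otimes\I^{(q)})$ with $\Cc_{n-1-i}(S;\Pih^{(\cdot)})$, but the former is a sum over simplices $J$ with $\dim J=i$ while the latter is a sum over simplices $J'$ with $\dim J'=n-1-i$; these are different simplices, so there is no ``componentwise'' pairing to speak of. If instead you meant to pair $\Cc^i$ with $\Cc_i$ (same simplices), then the conclusion would be $H^i\cong H_i^{*}$, not $H^k\cong H_{n-1-k}$. Either way the index shift by $n-1$ cannot come from a termwise exterior-algebra identity.

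Even on a single simplex the dimensions do not match. For $|J|=k$ one has $\dim\I(J)^{(q)}=\binom{\ld}{q}-\binom{\ld-k}{q}$ while $\dim\Pi_J^{(q)}=\binom{\ld-k}{q-k}$; these are unequal in general (try $\ld=n=3$, $k=2$, $q=1$). Your annihilator observation is correct --- indeed $\I(J)=\mathrm{Ann}(\Pi_J)$ under the wedge pairing --- but annihilator gives $\I(J)\cong(\Lambda/\Pi_J)^{*}$, not $\I(J)\cong\Pi_J^{*}$, so it does not furnish the pairing you want. Moreover the theorem asserts that the internal grading $q$ is \emph{preserved}, not sent to $\ld-q$; your proposed degree shift is inconsistent with the statement. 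Finally, in the general Buchsbaum case $\hh_0(J)$ need not be one-dimensional, so tensoring with $\hh_0$ cannot be absorbed into a choice of orientations.

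The shift by $n-1$ in the theorem is not a linear-algebra duality on each simplex; it comes from the Buchsbaum hypothesis, namely that $H^{*}(\lk_SI;\hh_0|_{\lk_SI})$ is concentrated in the single degree $\dim F_I-1=n-1-|I|$. The paper's proof exploits this by resolving $\hh_0\otimes\I$ by sheaves $\R_I=\hh_0\otimes\ups{I}^{\Pi_I}$ (a Taylor-type resolution), passing to the bicomplex of cochains, and running the two standard spectral sequences: the horizontal one collapses to $H^{*}(S;\hh_0\otimes\I)$, while the vertical one collapses (via acyclicity of proper faces, \eqref{eqCollapsingAtCMFace}) to $H_{*}(S;\Pih)$, with the degree shift arising precisely from the concentration degree $n-1$. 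This use of a resolution and the Buchsbaum condition is the missing idea in your proposal.
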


Before giving a proof let us deduce the Key lemma. We need to show
that $H^i(S;\hh_0\otimes\I^{(q)})=0$ for $i\leqslant n-1-q$.
According to Theorem \ref{thmDuality} this is equivalent to

\begin{lemma}
$H_i(S;\Pih^{(q)})=0$ for $i\geqslant q$.
\end{lemma}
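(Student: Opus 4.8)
The plan is to analyze the graded cosheaf $\Pih$ directly in terms of its definition. The key observation is that $\Pih^{(q)}$ is supported only on simplices $I$ with $|I|\leqslant q$: indeed, $\Pih(I)=\Pi_I$ is the principal ideal of $\La(I)\cong\Lambda$ generated by the $|I|$-form $\pi_I=\bigwedge_{i\in I}\omega_i$, so its degree-$q$ part $\Pi_I^{(q)}$ is nonzero only when $q\geqslant|I|$, that is when $\dim I\leqslant q-1$. Consequently the chain complex $\Cc_*(S;\Pih^{(q)})=\bigoplus_{\dim I=*}\Pih(I)^{(q)}$ is concentrated in homological degrees $*\leqslant q-1$, and therefore $H_i(S;\Pih^{(q)})=0$ automatically for $i\geqslant q$. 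This is the whole content of the lemma, and the degree-shift bookkeeping is the only thing to be careful about.

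First I would make the support statement precise: for $\varnothing\neq I\in S$ with $|I|=k$, the elements $\omega_{i_1},\dots,\omega_{i_k}\in\La(I)^{(1)}$ are linearly independent by the $\sta{\ko}$-condition built into the definition of a characteristic map, so $\pi_I\neq 0$ is a nonzero homogeneous element of degree $k$. Hence $\Pih(I)=\Pi_I=\pi_I\wedge\La(I)$ has $\Pih(I)^{(q)}=0$ whenever $q<k=|I|=\dim I+1$. Combined with $\Pih(\varnothing)=0$, this shows $\Cc_i(S;\Pih^{(q)})=\bigoplus_{\dim I=i}\Pih(I)^{(q)}=0$ for $i\geqslant q$ (and also for $i=-1$). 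Since the homology of a complex that vanishes in all degrees $\geqslant q$ vanishes in those degrees, we get $H_i(S;\Pih^{(q)})=0$ for $i\geqslant q$, which is exactly what is claimed.

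There is essentially no obstacle here; the lemma is a degree-count. The only point that deserves a sentence of justification is that $\Pih$ really is a subcosheaf of $\Lah$ with the degree grading preserved by the corestriction maps $\Lah(I>I')$, so that $\Cc_*(S;\Pih^{(q)})$ is a well-defined subcomplex of $\Cc_*(S;\Lah^{(q)})$ — but this is recorded in the construction preceding Theorem~\ref{thmDuality}, where it is observed that $\Lah(I>I')\pi_I$ is divisible by $\pi_{I'}$ whenever $I'<I$. With that in hand the proof is immediate. The substantive work has already been done in Theorem~\ref{thmDuality}, which converts the Key Lemma's statement about $H^i(S;\hh_0\otimes\I^{(q)})$ into this trivial vanishing statement about $\Pih^{(q)}$; all that remains is to record the dimension-count above.
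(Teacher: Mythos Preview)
Your proof is correct and follows exactly the same approach as the paper: both observe that since $\Pih(I)=\Pi_I$ is generated by the $|I|$-form $\pi_I$, the graded piece $\Pih(I)^{(q)}$ vanishes whenever $q\leqslant\dim I$, so the chain complex $\Cc_i(S;\Pih^{(q)})$ is zero in degrees $i\geqslant q$. Your write-up simply elaborates more than the paper's three-line version.
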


\begin{proof}
The ideal $\Pih(I)=\Pi_I$ is generated by the element $\pi_I$ of
degree $|I|=\dim I+1$. Thus $\Pi_I^{(q)}=0$ for $q\leqslant \dim
I$. Hence the corresponding part of the chain complex vanishes.
\end{proof}

\begin{proof}[Proof of Theorem \ref{thmDuality}]
The idea of proof is the following. First we construct a
resolution of sheaf $\hh_0\otimes\I$ whose terms are ``almost
acyclic''. By passing to cochain complexes this resolution
generates a bicomplex $\Cc^{\bullet}_{\bullet}$. By considering
two standard spectral sequences for this bicomplex we prove that
both $H^k(S;\hh_0\otimes\I)$ and $H_{n-1-k}(S;\Pih)$ are
isomorphic to the cohomology of the totalization
$\Cc^{\bullet}_{\Tot}$.

For each $\varnothing\neq I\in S$ consider the sheaf $\R_I =
\hh_0\otimes\ups{I}^{\Pi_I}$ (see examples \ref{exSheafLocal} and
\ref{exSheafLocHomol}), i.e.:
\[
\R_I(J) = \begin{cases} \hh_0(J)\otimes\Pi_I,\mbox{ if } I\leqslant J;\\
0\mbox{ otherwise.}
\end{cases}
\]
The sheaf $\R_I$ is graded by degrees of exterior forms:
$\R_I=\bigoplus_q\R_I^{(q)}$. Since $I>I'$ implies
$\Pi_I\subset\Pi_{I'}$, and $i\in\ver(S)$, $i\leqslant J$ implies
$\Pi_{i}\subset \I(J)$, there exist natural injective maps of
sheaves:
\[
\theta_{I>I'}\colon \R_I\hookrightarrow \R_{I'},
\]
and
\[
\eta_{i}\colon \R_i\hookrightarrow \hh_0\otimes\I.
\]
For each $k\geqslant 0$ consider the sheaf
\[
\R_{-k}=\bigoplus_{\dim I = k}\R_I,
\]
These sheaves can be arranged in the sequence
\begin{equation}\label{eqLongSeqOfSheaves}
\R_{\bullet}\colon\qquad\ldots\longrightarrow\R_{-2}\stackrel{d_H}{\longrightarrow}
\R_{-1}\stackrel{d_H}{\longrightarrow}\R_{0}\stackrel{\eta}{\longrightarrow}
\hh_0\otimes\I\longrightarrow 0,
\end{equation}
where $d_H=\bigoplus_{I>_1I'}[I:I']\theta_{I>I'}$ and
$\eta=\bigoplus_{i\in\ver(S)}\eta_i$. By the standard argument
involving incidence numbers $[I:I']$ one shows that
\eqref{eqLongSeqOfSheaves} is a differential complex of sheaves.
Moreover,

\begin{lemma}\label{lemmaExactSeqOfSheaves}
The sequence $\R_{\bullet}$ is exact.
\end{lemma}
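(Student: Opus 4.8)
The claim is that the complex of sheaves
\[
\ldots\longrightarrow\R_{-2}\stackrel{d_H}{\longrightarrow}\R_{-1}\stackrel{d_H}{\longrightarrow}\R_0\stackrel{\eta}{\longrightarrow}\hh_0\otimes\I\longrightarrow 0
\]
is exact. Exactness of a complex of sheaves on $S$ is checked stalk-wise: for each fixed $J\in S$ we must show that the complex of $\ko$-modules obtained by evaluating every sheaf at $J$ is exact. So the plan is to fix $J\neq\varnothing$ (the case $J=\varnothing$ is immediate since $\I(\varnothing)=0$ and every $\R_I(\varnothing)=0$), write out $\R_{-k}(J)=\bigoplus_{\dim I=k,\ I\leqslant J}\hh_0(J)\otimes\Pi_I$, factor out the common tensor factor $\hh_0(J)$, and reduce to an exactness statement purely about the exterior algebra $\Lambda=\La(J)$ and the principal ideals $\Pi_I\subset\Lambda$ generated by the monomials $\pi_I=\bigwedge_{i\in I}\omega_i$.

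After this reduction the statement to prove is the following: for a fixed simplex $J$ with vertex set $V=\{i_1,\ldots,i_k\}$ and linearly independent one-forms $\omega_{i_1},\ldots,\omega_{i_k}\in\Lambda$, the augmented complex
\[
\ldots\longrightarrow\bigoplus_{\substack{I\subseteq V\\|I|=2}}\Pi_I\longrightarrow\bigoplus_{i\in V}\Pi_i\stackrel{\eta}{\longrightarrow}\I(J)=(\omega_{i_1},\ldots,\omega_{i_k})\longrightarrow 0
\]
is exact, where the differential is the simplicial (Koszul-type) boundary on the poset of nonempty subsets of $V$ weighted by incidence signs, and the map into a given summand $\Pi_I$ uses the inclusions $\Pi_I\hookrightarrow\Pi_{I'}$ for $I'\subsetneq I$. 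The key observation is that this is nothing but the \v{C}ech/Koszul complex computing the ideal $\sum_{i\in V}\Pi_i$ as the union of the "closed subschemes" cut out by the $\pi_i$: the term in homological degree $k-1$ is $\bigoplus_{|I|=1}\Pi_I$, and the higher terms record the intersections $\Pi_I=\bigcap_{i\in I}\Pi_i$ (this last identity, $\Pi_I=\bigwedge_{i\in I}\omega_i\cdot\Lambda=\bigcap_{i\in I}(\omega_i\Lambda)$, holds precisely because the $\omega_i$ are linearly independent, so they can be completed to a basis of $\Lambda^{(1)}$ in which everything is monomial). Granting the intersection identity, exactness is the standard inclusion–exclusion acyclicity of the nerve-type complex of a finite family of submodules that is closed under intersection, proved by the usual contracting-homotopy argument (pick a vertex, say $i_1$, and cone off).

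Concretely, the homotopy goes as follows. Split $\Pi_I$ according to whether $i_1\in I$ or not; for $I\not\ni i_1$ the inclusion $\Pi_I\supseteq\Pi_{I\cup\{i_1\}}$ together with the direct-sum decomposition $\Pi_I=\Pi_{I\cup\{i_1\}}\oplus(\text{complement})$ in the monomial basis adapted to $\omega_{i_1}$ produces a degree $+1$ map $s$ on the complex; one then checks $d_Hs+sd_H=\id$ away from the augmentation, and $\eta s + (\text{boundary into }\Pi_{i_1})=\id$ on $\I(J)$, because every element of $(\omega_{i_1},\ldots,\omega_{i_k})$ differs from its $\omega_{i_1}$-divisible part by something in $(\omega_{i_2},\ldots,\omega_{i_k})$, and induct on $k$. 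The sign bookkeeping is exactly \eqref{eqSignSquare}, so it causes no trouble beyond care.

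\textbf{Main obstacle.} The only real content is the identity $\Pi_I=\bigcap_{i\in I}\Pi_i$ inside $\Lambda$ and, more importantly, the compatibility of all these identifications with the restriction maps of the sheaves, i.e. verifying that the stalk-wise homotopy can be chosen so that the argument is genuinely local and does not require global coherence (it does not — exactness of a complex of cellular sheaves really is stalk-local, so once each $\R_{\bullet}(J)$ is exact we are done). A secondary nuisance is making sure the maps $\theta_{I>I'}$ and $\eta_i$ assemble with the incidence signs into an honest complex with $d_H^2=0$ and $\eta\circ d_H=0$; this is the same formal computation as for the cochain differential of any cellular sheaf and I would dispatch it by citing \eqref{eqSignSquare} exactly as in Construction~\ref{conSheafOnQ}. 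I do not expect any of this to be deep; the work is entirely in setting up the reduction to $\Lambda$ cleanly.
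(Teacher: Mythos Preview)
Your proposal is correct and follows the same reduction as the paper: check exactness stalk-wise, factor out the free module $\hh_0(J)$, and use the $\sta{\ko}$-condition to extend $\{\omega_i\mid i\leqslant J\}$ to a basis $\{\nu_1,\ldots,\nu_n\}$ of $\La^{(1)}(J)$ so that everything becomes monomial. The only difference is in the final step. You propose an explicit contracting homotopy (cone off from a chosen vertex $i_1$, using the splitting $\Pi_I=\Pi_{I\cup\{i_1\}}\oplus(\text{complement})$). The paper instead splits $\Lambda=\bigoplus_{A\subseteq[n]}\Lambda_A$ into one-dimensional multidegree components $\Lambda_A=\ko\cdot\bigwedge_{l\in A}\nu_l$; every module and map in the complex respects this splitting, and the $A$-component is literally the augmented simplicial chain complex of the full simplex on $A\cap J$ (with coefficients in $\Lambda_A\cong\ko$), hence acyclic. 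The paper's route avoids the sign bookkeeping of your homotopy and makes the ``inclusion--exclusion'' structure transparent without needing the intermediate identity $\Pi_I=\bigcap_{i\in I}\Pi_i$; your route is more hands-on but equally valid, and indeed your homotopy is exactly what proves the simplex is acyclic in the first place.
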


\begin{proof}
We should prove that the value of $\R_{\bullet}$ at each $J\in S$
is exact. Since $\R_I(J)\neq 0$ only if $I\leqslant J$ the complex
$\R_{\bullet}(J)$ has the form
\begin{equation}\label{eqTaylorLike}
\ldots\longrightarrow \bigoplus_{I\leqslant J, |I|=2}\Pi_I
\longrightarrow \bigoplus_{I\leqslant J, |I|=1}\Pi_I
\longrightarrow \I(J)\longrightarrow 0,
\end{equation}
tensored with $\hh_0(J)$. Without lost of generality we forget
about $\hh_0(J)$. Maps in \eqref{eqTaylorLike} are given by
inclusions of sub-ideals (rectified by incidence signs). This
looks very similar to the Taylor resolution of monomial ideal in
commutative polynomial ring, but our situation is a bit different,
since $\Pi_I$ are not free modules over $\Lambda$. Anyway, the
proof is similar to commutative case: exactness of
\eqref{eqTaylorLike} follows from inclusion-exclusion principle.
To make things precise (and also to tackle the case $\ko=\Zo$) we
proceed as follows.

By $\sta{\ko}$-condition, the subspace $\langle\omega_{j}\mid j\in
J\rangle$ is a direct summand in $\La^{(1)}(J)\cong \ko^n$. Let
$\{\nu_1,\ldots,\nu_n\}$ be such a basis of $\La^{(1)}(J)$, that
its first $|J|$ vectors are identified with $\omega_j$, $j\in J$.
We simply write $J$ for $\{1,\ldots,|J|\}\subseteq [n]$ by abuse
of notation. The module $\Lambda$ splits in the multidegree
components: $\Lambda=\bigoplus_{A\subseteq[n]}\Lambda_{A}$, where
$\Lambda_A$ is a 1-dimensional $\ko$-module generated by
$\bigwedge_{i\in A}\nu_i$. All modules and maps in
\eqref{eqTaylorLike} respect this splitting. Thus
\eqref{eqTaylorLike} can be written as
\begin{gather*}
\ldots\longrightarrow \bigoplus_{I\subseteq J, |I|=2}
\bigoplus_{A\supseteq I}\Lambda_A \longrightarrow
\bigoplus_{I\subseteq J, |I|=1}\bigoplus_{A\supseteq I}\Lambda_A
\longrightarrow
\bigoplus_{A\cap J\neq\varnothing}\Lambda_A\longrightarrow 0,\\
\bigoplus_{A, A\cap J\neq \varnothing}\left(\ldots\longrightarrow
\bigoplus_{I\subseteq A\cap J, |I|=2} \Lambda_A \longrightarrow
\bigoplus_{I\subseteq A\cap J, |I|=1} \Lambda_A \longrightarrow
\Lambda_A\longrightarrow 0 \right)
\end{gather*}
For each $A$ the cohomology of the complex in brackets coincides
with $\Hr_*(\Delta_{A\cap J};\Lambda_A)\cong\Hr_*(\Delta_{A\cap
J};\ko)$, the reduced simplicial homology of a simplex on the set
$A\cap J\neq\varnothing$. Thus homology vanishes.
\end{proof}

By passing to cochains and forgetting the last term we get a
complex of complexes
\begin{equation}\label{eqCbicomplex}
\Cc^{\bullet}_{\bullet}=\Cc(S,\R_{\bullet})\colon\qquad\ldots\longrightarrow
\Cc^{\bullet}(S;\R_{-2}) \stackrel{d_H}{\longrightarrow}
\Cc^{\bullet}(S;\R_{-1}) \stackrel{d_H}{\longrightarrow}
\Cc^{\bullet}(S;\R_{0}) \longrightarrow 0,
\end{equation}
whose horizontal cohomology vanishes except for the upmost right
position. Let $d_V$ be the ``vertical'' cohomology differential
operating in each $\Cc^{\bullet}(S;\R_{k}^{(q)})$. Then
$d_Hd_V=d_Vd_H$. Thus $\Cc^{\bullet}_{\bullet}$ can be considered
as a bicomplex $(\Cc^{\bullet}_{\Tot},D)$:
\[
\Cc^{\bullet}_{\Tot}=\bigoplus_{i}\Cc^{i}_{\Tot},\quad
\Cc^{i}_{\Tot}=\bigoplus_{k+l=i}\Cc^{l}_{k},\quad D=d_H+(-1)^kd_V.
\]

There are two standard spectral sequences converging to
$H^*(\Cc^\bullet_{\Tot},D)$ \cite[Ch.2.4]{McCl}. The first one,
horizontal:
\[
\Eh_r^{*,*},\qquad \dho_r\colon\Eh_r^{k,l}\to\Eh_r^{k+1-r,l+r}
\]
computes horizontal cohomology first, then vertical cohomology.
The second, vertical,
\[
\Ev_r^{*,*},\qquad \dv_r\colon\Ev_r^{k,l}\to\Ev_r^{k+r,l+1-r}
\]
computes vertical cohomology first, then horizontal.

\begin{lemma}\label{lemmaTotIsSheaf}
$H^k(\Cc^\bullet_{\Tot},D)\cong H^k(S;\hh_0\otimes\I)$.
\end{lemma}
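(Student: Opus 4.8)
The plan is to extract this isomorphism from the \emph{horizontal} spectral sequence $\Eh_r^{*,*}$ of the bicomplex $(\Cc^\bullet_{\Tot},D)$ --- the one that computes $d_H$-cohomology first. Everything reduces to one elementary remark: for a fixed cochain degree $i$ the functor $\A\mapsto\Cc^i(S;\A)=\bigoplus_{\dim I=i}\A(I)$ is exact on sheaves over $S$, since exactness of a sequence of sheaves is by definition checked objectwise at each $I\in S$ and finite direct sums of $\ko$-modules are exact. Hence $\A\mapsto\Cc^\bullet(S;\A)$ is an exact functor from sheaves on $S$ to cochain complexes.

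Applying this functor to the resolution \eqref{eqLongSeqOfSheaves}, which is exact by lemma \ref{lemmaExactSeqOfSheaves}, I conclude that in every cochain degree the $d_H$-cohomology of the bicomplex $\Cc^\bullet_\bullet$ of \eqref{eqCbicomplex} vanishes in all horizontal positions except the rightmost one, and that in the rightmost position it equals $\coker\bigl(\Cc^\bullet(S;\R_{-1})\stackrel{d_H}{\longrightarrow}\Cc^\bullet(S;\R_0)\bigr)\cong\Cc^\bullet(S;\hh_0\otimes\I)$ by right-exactness applied to $\R_{-1}\to\R_0\to\hh_0\otimes\I\to 0$. Moreover the augmentation $\eta$ induces a morphism of cochain complexes $\Cc^\bullet(S;\R_0)\to\Cc^\bullet(S;\hh_0\otimes\I)$ for the vertical differential $d_V$, and this morphism realizes the isomorphism just mentioned; therefore the vertical differential induced on the surviving column of $\Eh$ is exactly the cochain differential $d$ of the sheaf $\hh_0\otimes\I$. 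Consequently the next page of $\Eh$ is concentrated in the rightmost column, where it reads $H^\bullet(S;\hh_0\otimes\I)$.

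To finish, I observe that since $S$ is a finite simplicial poset of dimension $n-1$ the bicomplex $\Cc^\bullet_\bullet$ is bounded --- the horizontal degrees run over $-(n-1),\dots,0$ and the cochain degrees over $-1,\dots,n-1$ --- so $\Eh_r^{*,*}$ converges to $H^\bullet(\Cc^\bullet_{\Tot},D)$. Being concentrated in one column it degenerates at the second page, and since an element lying in the rightmost column in cochain degree $k$ has total degree $k$, I obtain $H^k(\Cc^\bullet_{\Tot},D)\cong H^k(S;\hh_0\otimes\I)$, which is the assertion of the lemma.

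I do not anticipate any serious difficulty here: the substantive ingredient, exactness of $\R_\bullet$, is already supplied by lemma \ref{lemmaExactSeqOfSheaves}, and the only mild point requiring care is the verification that the differential surviving on the rightmost column of $\Eh$ is genuinely the sheaf differential of $\hh_0\otimes\I$, which follows from naturality of the cochain construction. The real content of Theorem \ref{thmDuality} lies instead in the companion computation of $H^\bullet(\Cc^\bullet_{\Tot},D)$ through the \emph{vertical} spectral sequence $\Ev_r^{*,*}$, which will identify it with $H_{n-1-\bullet}(S;\Pih)$ --- but that is the business of the next lemma, not of this one.
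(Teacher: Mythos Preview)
Your proof is correct and follows essentially the same approach as the paper: both use the horizontal spectral sequence $\Eh_r^{*,*}$, observe that the $E_1$-page is concentrated in the column $k=0$ where it equals $\Cc^\bullet(S;\hh_0\otimes\I)$, and conclude by collapse at $E_2$. Your version is simply more explicit about why the $E_1$-page has this form (exactness of $\A\mapsto\Cc^\bullet(S;\A)$ applied to the resolution of lemma~\ref{lemmaExactSeqOfSheaves}) and about convergence and degree-matching, whereas the paper states these points tersely.
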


\begin{proof}
Consider the horizontal spectral sequence:
\begin{gather*}\label{eqEhorizon}
\Eh_1^{k,l}=H^k(\Cc^l(S,\R_k),\dho)=
\begin{cases}
\Cc^l(S;\hh_0\otimes\I), k = 0;\\
0, \mbox{ o.w.}
\end{cases}\\
\Eh_2^{k,l}=\begin{cases}H^l(S;\hh_0\otimes\I),\mbox{ if } k=0;
\\0,\mbox{ o.w.}
\end{cases}
\end{gather*}
Spectral sequence $\Eh_*^{*,*}$ collapses at the second term and
the statement follows.
\end{proof}

\begin{lemma}\label{lemmaTotIsCosheaf}
$H^k(\Cc^\bullet_{\Tot},D)\cong H_{n-1-k}(S;\Pih)$.
\end{lemma}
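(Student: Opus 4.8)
The plan is to run the second (vertical) spectral sequence $\Ev_*^{*,*}$ of the bicomplex $\Cc^\bullet_\bullet$, in contrast to the horizontal one used in Lemma \ref{lemmaTotIsSheaf}. Since the bicomplex is built from the resolution $\R_\bullet$ whose $k$-th term is $\R_{-k}=\bigoplus_{\dim I=k}\R_I$ with $\R_I=\hh_0\otimes\ups{I}^{\Pi_I}$, the first step is to compute the vertical (sheaf) cohomology $\Ev_1^{k,l}=H^l(S;\R_{-k})=\bigoplus_{\dim I=k}H^l(S;\hh_0\otimes\ups{I}^{\Pi_I})$. By Example \ref{exSheafLocTens} this equals $\bigoplus_{\dim I=k}H^{l-|I|}(\lk_SI;\hh_0|_{\lk_SI})\otimes\Pi_I$. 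Now invoke the Buchsbaum/Cohen--Macaulay collapsing: for $\varnothing\neq I$, $F_I$ is Cohen--Macaulay and \eqref{eqCollapsingAtCMFace} gives $H^m(\lk_SI;\hh_0)\cong H_{\dim F_I-1-m}(F_I)$, which is $\ko$ exactly when $m=\dim F_I-1=n-|I|-1$ and $0$ otherwise. So $\Ev_1^{k,l}$ is concentrated in the single row $l-|I|=n-|I|-1$, i.e. $l=n-1$, and there $\Ev_1^{k,n-1}=\bigoplus_{\dim I=k}\Pi_I$. In other words the whole vertical $E_1$-page lives in the top row $l=n-1$.

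The second step is to identify the resulting single-row complex (the horizontal differential $d_H$ acting on $\bigoplus_{\dim I=k}\Pi_I$) with the chain complex of the cosheaf $\Pih$. Recall $\Pih(I)=\Pi_I\subseteq\Lah(I)$ for $I\neq\varnothing$, with corestriction maps the inverses of the locally constant restriction isomorphisms of $\La$; these are exactly the inclusions $\Pi_I\hookrightarrow\Pi_{I'}$ for $I'<_1 I$ that appear (rectified by incidence signs $[I:I']$) in $d_H=\bigoplus_{I>_1I'}[I:I']\theta_{I>I'}$. Comparing with the definition of $\Cc_*(S;\Pih)$ in the cosheaf-homology subsection, one sees $\Cc_i(S;\Pih)=\bigoplus_{\dim I=i}\Pi_I$ and its differential is precisely $d_H$ up to the indexing convention $\dim I=i$. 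Matching the gradings: a term sitting in column $k$ (i.e. $\dim I=k$) and row $l=n-1$ contributes to total degree $k+(n-1)$, whereas in $\Cc_*(S;\Pih)$ the piece $\bigoplus_{\dim I=k}\Pi_I$ sits in homological degree $k$; so total degree $i$ on the bicomplex corresponds to homological degree $i-(n-1)$, equivalently cosheaf homology in degree $n-1-(i)$ after reindexing — hence $H^k(\Cc^\bullet_\Tot,D)\cong H_{n-1-k}(S;\Pih)$. The grading by degree of exterior forms is preserved throughout, since each $\R_I$ and each $\Pi_I$ is graded and all maps in sight are homogeneous; this takes care of the ``respects the gradings'' clause of Theorem \ref{thmDuality}.

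The third step is just bookkeeping: because $\Ev_1$ is concentrated in one row, $\Ev_2=\Ev_\infty$ immediately (all higher $\dv_r$ vanish for $r\geqslant 2$ as they would change the row), so the vertical spectral sequence degenerates and $H^*(\Cc^\bullet_\Tot,D)$ is computed by the homology of that single row, which is $H_*(S;\Pih)$ with the shift above. Combined with Lemma \ref{lemmaTotIsSheaf} this finishes Theorem \ref{thmDuality}, and the Key lemma follows from the subsequent vanishing lemma as already indicated.

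\textbf{Main obstacle.} The genuinely delicate point is the compatibility of the restriction/corestriction maps under all these identifications — specifically, checking that the maps $\theta_{I>I'}$ of sheaves $\R_I\hookrightarrow\R_{I'}$, after applying $H^{n-1}(S;-)$, really do land on the inclusions $\Pi_I\hookrightarrow\Pi_{I'}$ that define the cosheaf $\Pih$, and with the \emph{correct} incidence signs so that $d_H$ becomes exactly the cosheaf differential (and not off by a sign convention). One must also make sure the collapsing isomorphism \eqref{eqCollapsingAtCMFace} is natural with respect to the maps $\lk_SI'\hookrightarrow\lk_SI$ induced by $I'<I$, so that the row-$l=n-1$ complex is genuinely $\Cc_*(S;\Pih)$ on the nose rather than merely term-by-term isomorphic; this naturality is the analogue of the naturality issue already flagged in Remark \ref{remLocalSheavesOnPoset}, and I would handle it the same way — by choosing the orientation/sign data once and for all and tracking it through the excision isomorphisms.
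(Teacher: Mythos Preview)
Your proposal is correct and follows essentially the same route as the paper: run the vertical spectral sequence, use Example~\ref{exSheafLocTens} and the Cohen--Macaulay collapsing \eqref{eqCollapsingAtCMFace} to concentrate $\Ev_1$ in the single row $l=n-1$, identify that row with $\Cc_*(S;\Pih)$, and conclude by degeneration at $\Ev_2$. The paper handles your ``main obstacle'' (naturality of the collapsing isomorphisms under $\theta_{I>I'}$) exactly as you anticipate, via a brief commutative-square check; your horizontal index is off by a sign from the paper's convention (there $\R_{-k}$ sits in column $-k$, so $\Ev_1^{k,l}=H^l(S;\R_k)$ with $k\leqslant 0$), but your ``after reindexing'' remark covers this and you land on the correct statement.
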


\begin{proof}
Consider the vertical spectral sequence. It starts with
\[
\Ev_1^{k,l}\cong H^l(S;\R_k)=\bigoplus_{\dim I=-k}H^l(S;\R_I).
\]
Similar to example \ref{exSheafLocTens} we get
\[
H^l(S;\R_I)=H^l(S;\hh_0\otimes\ups{I}^{\Pi_I})=
H^{l-|I|}(\lk_SI;\hh_0|_{\lk I}\otimes\Pi_I)
\]
The restriction of $\hh_0$ to $\lk_SI\subset S$ coincides with the
structure sheaf of $F_I$ and by \eqref{eqCollapsingAtCMFace} we
have a collapsing
\[
H^{l-|I|}(\lk_SI;\hh_0|_{\lk I}\otimes\Pi_I)
\stackrel{\cong}{\Rightarrow} H_{n-1-l}(F_I)\otimes\Pi_I
\]
A proper face $F_I$ is acyclic, thus
\[
H^l(S;\R_I)\cong\begin{cases} \Pi_I,\mbox{ if }l=n-1,\\
0,\mbox{ if }l\neq n-1.
\end{cases}
\]
The maps $\theta_{I>I'}$ induce the isomorphisms $H_*(F_I)\to
H_*(F_{I'})$ which assemble in commutative squares
\[
\xymatrix{
H^{n-1}(S;\R_I)\ar@{=>}[r]^(0.7){\cong}\ar@{->}[d]^{\theta_{I>I'}^*}&
\Pi_I\ar@{^{(}->}[d]\\
H^{n-1}(S;\R_{I'})\ar@{=>}[r]^(0.7){\cong}&\Pi_{I'}}
\]
Thus the first term of vertical spectral sequence is identified
with the chain complex of cosheaf $\Pih$:
\[
\Ev_1^{k,l}=\begin{cases} \Cc_{-k}(S;\Pih),\mbox{ if }l=n-1;\\
0,\mbox{ if }l\neq n-1.
\end{cases}\qquad
\Ev_2^{k,l}=\begin{cases} H_{-k}(S;\Pih),\mbox{ if }l=n-1;\\
0,\mbox{ if }l\neq n-1.
\end{cases}
\]
The spectral sequence $\Ev_*^{k,l}\Rightarrow
H^{k+l}(\Ccq^\bullet_{\Tot},D)$ collapses at the second page.
Lemma proved.
\end{proof}

Theorem \ref{thmDuality} follows from lemmas \ref{lemmaTotIsSheaf}
and \ref{lemmaTotIsCosheaf}.
\end{proof}

\subsection{Extending duality to exact sequences}

Theorem \ref{thmDuality} can be refined:

\begin{stm}\label{stmSeqOfSheavesDual}
The short exact sequence of sheaves
\begin{equation}\label{eqShortExIdeal}
0\rightarrow\hh_0\otimes\I\rightarrow\hh_0\otimes\La \rightarrow
\hh_0\otimes (\La/\I)\rightarrow 0
\end{equation}
and the short exact sequence of cosheaves
\[
0\rightarrow\Pih\rightarrow\Lah \rightarrow \Lah/\Pih\rightarrow 0
\]
induce isomorphic long exact sequences in (co)homology:
\begin{equation}\label{eqTwoSeqIsom}
\xymatrix{H^i(S;\hh_0\otimes\I)\ar@{->}[r]&
H^i(S;\hh_0\otimes\La)\ar@{->}[r]&H^i(S;\hh_0\otimes(\La/\I))\ar@{->}[r]&
H^{i+1}(S;\hh_0\otimes\I)\\
H_{n-1-i}(S;\Pih)\ar@{->}[r]\ar@{<->}[u]_{\cong}&
H_{n-1-i}(S;\Lah)\ar@{->}[r]\ar@{<->}[u]_{\cong}&H_{n-1-i}(S;(\Lah/\Pih))
\ar@{->}[r]\ar@{<->}[u]_{\cong}&H^{n-2-i}(S;\Pih)\ar@{<->}[u]_{\cong}}
\end{equation}
\end{stm}

\begin{proof}
The proof goes essentially the same as in Theorem
\ref{thmDuality}. Denote sequence \eqref{eqShortExIdeal} by
$\Iseq$. For each $\varnothing\neq I\in S$ consider the short
exact sequence of sheaves:
\[
\Rseq_I\quad\colon\quad
0\rightarrow\R_I\rightarrow\hh_0\otimes\La\rightarrow
\hh_0\otimes\left(\La/\ups{I}^{\Pi_I}\right)\rightarrow 0
\]
and define
\[
\Rseq_{-k}=\bigoplus_{\dim I=k}\Rseq_I
\]
One can view $\Iseq$, $\Rseq_I$ and $\Rseq_{-k}$ as the objects in
a category of complexes. As before, we can form the sequence
\begin{equation}\label{eqLongSeqOfSeqs}
\Rseq_{\bullet}\colon\qquad\ldots\longrightarrow\Rseq_{-2}\stackrel{d_H}{\longrightarrow}
\Rseq_{-1}\stackrel{d_H}{\longrightarrow}\Rseq_{0}\stackrel{\eta}{\longrightarrow}
\Iseq\longrightarrow 0,
\end{equation}
which happens to be exact in all positions. This long exact
sequence (after forgetting the last term) generates the bicomplex
of short exact sequences (or the short exact sequence of
bicomplexes) $\Cseq_{\bullet}^{\bullet}$. By taking totalization
and considering standard spectral sequences we check that both
rows in \eqref{eqTwoSeqIsom} are isomorphic to the long exact
sequence of cohomology associated to $(\Cseq^{\bullet}_{\Tot},D)$.
\end{proof}

\subsection{Remark on duality}

In the manifold case (i.e. sheaf $\hh_0$ is isomorphic to $\ko$),
the proof of Theorem \ref{thmDuality} can be restated in more
conceptual terms. In this case the cellular version of Verdier
duality for manifolds \cite[Th.12.3]{Curry} asserts:
\[
H^i(S;\I) = H_{n-1-i}(S;\I),
\]
where the homology groups of a cellular sheaf are defined as
homology of global sections of projective sheaf resolution
\cite[Def.11.29]{Curry}. The sheaf
$\R_I=\hh_0\otimes\ups{I}^{\Pi_I}\cong \ups{I}^{\Pi_I}$ is
projective (\cite[Sec.11.1.1]{Curry}), thus
\eqref{eqLongSeqOfSheaves} is actually a projective resolution.
Due to the specific structure of this resolution, we have
$H_*(S;\I)\cong H_*(S;\Pih)$.

\section{Face vectors and ranks of border
components}\label{secHvectors}

In this section we prove Theorems \ref{thmBorderStruct},
\ref{thmBorderManif} and \ref{thmHtwoPrimes}.

\subsection{Preliminaries on face vectors}
First recall several standard definitions from combinatorial
theory of simplicial complexes and posets.

\begin{con}
Let $S$ be a pure simplicial poset, $\dim S=n-1$. Let
$f_i(S)=\{I\in S\mid \dim I=i\}$, $f_{-1}(S)=1$. The array
$(f_{-1},f_0,\ldots,f_{n-1})$ is called the $f$-vector of $S$. We
write $f_i$ instead of $f_i(S)$ since $S$ is clear from the
context. Let $f_S(t)$ be the generating polynomial: $f_S(t) =
\sum_{i\geqslant 0}f_{i-1}t^i$.

Define the $h$-numbers by the relation:
\begin{equation}\label{eqHvecDefin}
\sum_{i=0}^nh_i(S)t^i=\sum_{i=0}^nf_{i-1}t^i(1-t)^{n-i}=
(1-t)^nf_S\left(\dfrac{t}{1-t}\right).
\end{equation}

Let $b_i(S) = \dim H_i(S)$, $\br_i(S)=\dim \Hr_i(S)$,
$\chi(S)=\sum_{i=0}^{n-1}(-1)^ib_i(S)=
\sum_{i=0}^{n-1}(-1)^if_i(S)$ and
$\chir(S)=\sum_{i=0}^{n-1}\br_i(S)=\chi(S)-1$. Thus
$f_S(-1)=1-\chi(S)$. Also note that $h_{n}(S)=(-1)^{n-1}\chir(S)$.

Define $h'$- and $h''$-vectors by
\begin{equation}\label{eqDefHprim}
h_i'=h_i+{n\choose
i}\left(\sum_{j=1}^{i-1}(-1)^{i-j-1}\br_{j-1}(S)\right)\mbox{ for
} 0\leqslant i\leqslant n;
\end{equation}
\begin{equation}\label{eqdefHtwoprimes}
h_i'' = h_i'-{n\choose i}\br_{i-1}(S) = h_i+{n\choose
i}\left(\sum_{j=1}^{i}(-1)^{i-j-1}\br_{j-1}(S)\right)\mbox{ for }
0\leqslant i\leqslant n-1,
\end{equation}
and $h''_n=h'_n$. Note that
\begin{equation}\label{eqHnprime}
h'_n=h_n+\sum_{j=0}^{n-1}(-1)^{n-j-1}\br_{j-1}(S)= \br_{n-1}(S).
\end{equation}
\end{con}

\begin{stm}[Dehn--Sommerville relations]
If $S$ is Buchsbaum and $\dim\hh_0(I)~=~1$ for each $I\neq
\varnothing$, then
\begin{equation}\label{eqDehnSomQuasiManif}
h_i=h_{n-i}+(-1)^i{n\choose i}(1-(-1)^n-\chi(S)),
\end{equation}
or, equivalently:
\begin{equation}\label{eqDehnSomQuasiManifChir}
h_i=h_{n-i}+(-1)^i{n\choose i}(1+(-1)^n\chir(S)),
\end{equation}
If, moreover, $S$ is a homology manifold, then $h''_i=h''_{n-i}$.
\end{stm}

\begin{proof}
The first statement can be found e.g. in \cite{St} or
\cite[Thm.3.8.2]{BPnew}. Also see remark \ref{remDehnSomManif}
below. The second then follows from the definition of $h''$-vector
and Poincare duality \eqref{eqBuchManPoincS}
$b_i(S)=b_{n-1-i}(S)$.
\end{proof}


\begin{defin}
Let $S$ be Buchsbaum. For $i\geqslant 0$ consider
\[
\ft_i(S)=\sum_{I\in S,\dim I=i}\dim \Hr_{n-1-|I|}(\lk_SI) =
\sum_{I\in S,\dim I=i}\dim \hh_0(I).
\]
\end{defin}

If $S$ is a homology manifold, then $\ft_i(S)=f_i(S)$. For general
Buchsbaum complexes there is another formula connecting these
quantities.

\begin{prop}\label{propFtildeToF}
$f_S(t)=(1-\chi(S))+(-1)^n\sum_{k\geqslant
0}\ft_{k}(S)\cdot(-t-1)^{k+1}$.
\end{prop}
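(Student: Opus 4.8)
The plan is to express both sides as polynomials in $t$ and compare coefficients, using the inclusion--exclusion structure of links in a simplicial poset. First I would record the two defining identities: $f_S(t)=\sum_{i\geqslant 0}f_{i-1}t^i$ and, for each $I\in S$, the Euler characteristic of the link, $\chir(\lk_SI)=\sum_{j\geqslant 0}(-1)^j\br_j(\lk_SI)$. Since $S$ is Buchsbaum, $\lk_SI$ is Cohen--Macaulay of dimension $n-1-|I|$ for $I\neq\varnothing$, so its only possibly nonzero reduced homology sits in top degree; hence $\br_{n-1-|I|}(\lk_SI)=\dim\hh_0(I)$ and $\chir(\lk_SI)=(-1)^{n-1-|I|}\dim\hh_0(I)$. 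Therefore, summing over all $I$ with $\dim I=k$ (i.e.\ $|I|=k+1$),
\[
\sum_{\dim I=k}\chir(\lk_SI)=(-1)^{n-k-2}\,\ft_k(S).
\]
This is the bridge between $\ft_k$ and Euler characteristics of links.

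Next I would use the standard combinatorial identity counting simplices above a given one: for a simplicial poset, each $J\in S$ with $|J|=m$ lies above exactly $\binom{m}{k+1}$ simplices $I$ of rank $k+1$ (because the lower interval $S_{\leqslant J}$ is the boolean lattice $2^{[m]}$). Equivalently, $\sum_{\dim I=k}f_j(\lk_SI)$ counts pairs $I\leqslant J$ with $\dim I=k$, $\dim J=k+1+j$, which equals $\binom{k+1+j+1}{k+1}f_{k+1+j}(S)$ --- wait, more carefully: $f_j(\lk_SI)$ counts $J\geqslant I$ with $|J|-|I|=j+1$, so $\sum_{\dim I=k}f_j(\lk_SI)=\sum_{\dim J=k+1+j}\binom{|J|}{k+1}\cdot[\text{number of }I\leqslant J\text{ of rank }k+1]$; since that inner count is $\binom{|J|}{k+1}$ we get $\binom{k+j+2}{k+1}f_{k+1+j}(S)$. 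Combining with $\chir(\lk_SI)=\sum_j(-1)^j\br_j(\lk_SI)$ is not quite direct, so instead I would pass through $f$-vectors of links: $f_S(-1)=1-\chi(S)$ applied to $\lk_SI$ gives $\chir(\lk_SI)=-f_{\lk_SI}(-1)+$ (the empty-face correction is already built into $\chir=\chi-1$, so $f_{\lk_SI}(-1)=1-\chi(\lk_SI)=-\chir(\lk_SI)$). Thus
\[
(-1)^{n-k-2}\ft_k(S)=\sum_{\dim I=k}f_{\lk_SI}(-1)\cdot(-1)
=-\sum_{\dim I=k}\sum_{j\geqslant 0}f_{j-1}(\lk_SI)(-1)^j.
\]
Now substitute the pair-counting identity $\sum_{\dim I=k}f_{j-1}(\lk_SI)=\binom{k+j+1}{k+1}f_{k+j}(S)$ (pairs $I\leqslant J$ with $\dim I=k$, $|J|-|I|=j$), so the whole double sum reorganizes into a single sum over $\dim J=m$ weighted by binomial coefficients.

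Finally I would assemble: the right-hand side of the proposition is $(1-\chi(S))+(-1)^n\sum_{k\geqslant 0}\ft_k(S)(-t-1)^{k+1}$. Plug in $\ft_k(S)=(-1)^{n-k-2}\cdot(\text{the binomial sum above})=(-1)^{n-k}(\cdots)$, so $(-1)^n\ft_k(S)=(-1)^{k+1}\cdot(\cdots)\cdot(-1)$, and $(-1)^n(-t-1)^{k+1}\ft_k(S)$ becomes a clean expression in $(t+1)^{k+1}$. Expanding $\sum_k (t+1)^{k+1}\binom{m}{k+1}$-type sums via the binomial theorem collapses everything to $\sum_m f_{m-1}(S)t^m=f_S(t)$, after the constant term $1-\chi(S)$ absorbs the $j=0$ (empty link) contributions; this is exactly the $(1-t)^n f_S(t/(1-t))$-style manipulation already used for $h$-vectors in \eqref{eqHvecDefin}. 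The main obstacle I anticipate is bookkeeping the signs and the off-by-one in ranks (the role of $f_{-1}=1$, the empty simplex $\varnothing\in S$, and the shift between $\dim I$ and $|I|$), together with making the binomial convolution $\sum_{k}\binom{m}{k+1}(t+1)^{k+1}=(t+2)^m-1$ interact correctly with the alternating signs; once those are pinned down the identity is forced term by term. An alternative, possibly cleaner route is to verify the claimed formula for the two extreme cases ($S$ Cohen--Macaulay, where all $\ft_k=0$ for $k<n-1$ and only the top term survives, and $S$ a homology manifold, where $\ft_k=f_k$ and the formula must reduce to $f_S(t)=\sum f_{k}(-t-1)^{k+1}(-1)^n+(1-\chi)$, a known reformulation of Dehn--Sommerville) and then argue by linearity in the $\br_j$'s; but I expect the direct coefficient comparison to be the most transparent.
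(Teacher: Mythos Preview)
Your approach is correct in principle but takes a genuinely different route from the paper. The paper observes the derivative identity
\[
\left(\frac{d}{dt}\right)^k f_S(t)=k!\sum_{|I|=k}f_{\lk_S I}(t),
\]
evaluates at $t=-1$ to obtain $f_S^{(k)}(-1)=(-1)^{n-k}k!\,\ft_{k-1}(S)$ (using that each proper link is Cohen--Macaulay, so $1-\chi(\lk_SI)=(-1)^{n-|I|}\dim\Hr_{n-1-|I|}(\lk_SI)$), and then simply writes down the Taylor expansion of $f_S(t)$ about $t=-1$. This is very short and entirely avoids the binomial bookkeeping you anticipate as the main obstacle. Your method --- double-counting pairs $I\leqslant J$ to rewrite $\ft_k$ through $f$-numbers and then collapsing a binomial sum --- is the expanded, coefficient-by-coefficient version of the same computation; it works, but the Taylor formulation packages the signs and shifts automatically.

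One concrete correction: the binomial identity you quote, $\sum_{k}\binom{m}{k+1}(t+1)^{k+1}=(t+2)^m-1$, is not the one that actually arises. When you trace your substitution through, the inner sum over $k$ carries the alternating sign $(-1)^{m-k-1}$, so the relevant identity is
\[
\sum_{l=1}^{m}\binom{m}{l}(t+1)^{l}(-1)^{m-l}=(t+1-1)^m-(-1)^m=t^m-(-1)^m,
\]
which is exactly what makes the right-hand side collapse to $f_S(t)$ after the $(-1)^m$ terms recombine into $f_S(-1)=1-\chi(S)$ and cancel the constant. Your stated $(t+2)^m-1$ would not give the answer. Also, the ``alternative route'' you suggest at the end --- verifying the formula in the Cohen--Macaulay and homology-manifold cases and then arguing by linearity in the $\br_j$'s --- does not work as stated: the numbers $\ft_k(S)$ are not linear functionals in the individual reduced Betti numbers of $S$, so there is no linearity principle to invoke.
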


\begin{proof}
This follows from the general statement
\cite[Th.9.1]{MMP},\cite[Th.3.8.1]{BPnew}, but we provide an
independent proof for completeness. As stated in
\cite[Lm.3.7,3.8]{AyBuch} for simplicial complexes (and also not
difficult to prove for posets)
$\frac{d}{dt}f_S(t)=\sum_{v\in\ver(S)}f_{\lk v}(t)$, and, more
generally,
\[
\left(\frac{d}{dt}\right)^kf_S(t)=k!\sum_{I\in S, |I|=k}f_{\lk
I}(t).
\]
Thus for $k\geqslant 1$:
\begin{multline*}
f_S^{(k)}(-1)=k!\sum_{I\in S, |I|=k}(1-\chi(\lk_SI)) =\\=
k!\sum_{I\in S, |I|=k}(-1)^{n-|I|}\dim \Hr_{n-|I|-1}(\lk I) =
(-1)^{n-k}k!\ft_{k-1}(S).
\end{multline*}
Considering the Taylor expansion of $f_S(t)$ at $-1$:
\[
f_S(t)=f_S(-1)+\sum_{k\geqslant
1}\frac{1}{k!}f_S^{(k)}(-1)(t+1)^k= (1-\chi(S))+\sum_{k\geqslant
0}(-1)^{n-k-1}\ft_k(S)\cdot(t+1)^{k+1},
\]
finishes the proof.
\end{proof}

\begin{rem}\label{remDehnSomManif}
If $S$ is a manifold, then proposition \ref{propFtildeToF} implies
\[
f_S(t) = (1-(-1)^n-\chi(S))+(-1)^nf_S(-t-1),
\]
which is an equivalent form of Dehn--Sommerville relations
\eqref{eqDehnSomQuasiManif}.
\end{rem}

\begin{lemma}\label{lemmaHfromFt}
For Buchsbaum poset $S$ there holds
\[
\sum_{i=0}^nh_it^i = (1-t)^n(1-\chi(S)) + \sum_{k\geqslant
0}\ft_k(S)\cdot(t-1)^{n-k-1}.
\]
\end{lemma}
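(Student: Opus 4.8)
The plan is to derive the stated identity directly from Proposition \ref{propFtildeToF} together with the defining relation \eqref{eqHvecDefin} for the $h$-polynomial, performing a single substitution and bookkeeping the resulting powers of $(1-t)$.

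First I would recall from \eqref{eqHvecDefin} that
\[
\sum_{i=0}^n h_i t^i = (1-t)^n f_S\!\left(\frac{t}{1-t}\right).
\]
So it suffices to evaluate $f_S$ at the point $s = t/(1-t)$ using the formula of Proposition \ref{propFtildeToF},
\[
f_S(s) = (1-\chi(S)) + (-1)^n \sum_{k\geqslant 0}\ft_k(S)\,(-s-1)^{k+1}.
\]
The key computation is that $-s-1 = -\frac{t}{1-t} - 1 = \frac{-t-(1-t)}{1-t} = \frac{-1}{1-t}$, hence $(-s-1)^{k+1} = (-1)^{k+1}(1-t)^{-(k+1)}$. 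Substituting and multiplying through by $(1-t)^n$ gives
\[
\sum_{i=0}^n h_i t^i = (1-t)^n(1-\chi(S)) + (-1)^n\sum_{k\geqslant 0}\ft_k(S)\,(-1)^{k+1}(1-t)^{n-k-1}.
\]
It remains to check the sign: $(-1)^n(-1)^{k+1}(1-t)^{n-k-1} = (-1)^{n+k+1}(1-t)^{n-k-1} = (t-1)^{n-k-1}$, since $(-1)^{n-k-1}(1-t)^{n-k-1} = (t-1)^{n-k-1}$ and $(-1)^{n+k+1} = (-1)^{n-k-1}$ (the exponents differ by $2k+2$, an even number). This yields exactly the claimed formula.

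I expect no genuine obstacle here; the only point requiring a small amount of care is the sign bookkeeping in the last step — verifying that $(-1)^n(-1)^{k+1}$ combines with $(1-t)^{n-k-1}$ to produce precisely $(t-1)^{n-k-1}$ rather than its negative. One should also note that the sum over $k$ is finite (terms with $k > n-1$ vanish because $\ft_k(S) = 0$ once $\dim I = k$ exceeds $n-1$, and in any case contribute only through the bounded expression), so the substitution $s = t/(1-t)$, which is a priori a rational-function manipulation, is legitimate as an identity of polynomials in $t$ after clearing the denominator $(1-t)^n$.
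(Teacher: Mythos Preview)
Your proof is correct and follows exactly the same approach as the paper's proof, which simply reads ``Substitute $t/(1-t)$ in proposition \ref{propFtildeToF} and use \eqref{eqHvecDefin}.'' You have merely spelled out the substitution and the sign bookkeeping in full detail.
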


\begin{proof}
Substitute $t/(1-t)$ in proposition \ref{propFtildeToF} and use
\eqref{eqHvecDefin}.
\end{proof}

The coefficients of $t^i$ in lemma \ref{lemmaHfromFt} give the
relations
\begin{equation}\label{eqFtildeToH}
h_i(S)=(1-\chi(S))(-1)^i{n\choose i}+\sum_{k\geqslant
0}(-1)^{n-k-i-1}{n-k-1\choose i} \ft_k(S).
\end{equation}

\subsection{Ranks of $\Ex^1_{*,*}$}

Our goal is to compute the ranks of border groups $\dim
\Ex^{1+}_{q,q}$. The idea is very straightforward: statement
\ref{stmThmSpecConvint} describes the ranks of all groups
$\Ex^{1+}_{p,q}$ except for $p=q$; and the terms $\Ex^1_{p,q}$ are
known as well; thus $\dim \Ex^{1+}_{q,q}$ can be found by
comparing Euler characteristics. Note that the terms with $p=n$ do
not change when passing from $\Ex^1$ to $\Ex^{1+}$. Thus it is
sufficient to perform calculations with the truncated sequence
$\Ext$. By construction, $\Ex^1_{p,q}\cong \Ext^1_{p,q}\cong
C^{n-p-1}(S;\hh^X_q)$ for $p<n$. Thus lemma
\ref{lemmaTruncExtDescr} implies for $p<n$:
\[
\dim \Ex^1_{p,q} = \dim \Ext^1_{p,q}
=\sum_{|I|=n-p}\dim\hh_0(I)\cdot\dim (\Lambda/\I(I))^{(q)} =
{p\choose q}\cdot \ft_{n-p-1}(S).
\]
Let $\chi^1_q$ be the Euler characteristic of $q$-th row of
$\Ext^1_{*,*}$:
\begin{equation}\label{eqChi1def}
\chi_q^1=\sum_{p\leqslant
n-1}(-1)^p\dim\Ext^1_{p,q}=\sum_{p\leqslant n-1}(-1)^p{p\choose
q}\ft_{n-p-1}
\end{equation}

\begin{lemma}\label{lemmaChic1}
For $q\leqslant n-1$ there holds $\chi_q^1=(\chi(S)-1){n\choose
q}+(-1)^qh_q(S)$.
\end{lemma}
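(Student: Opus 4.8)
The plan is to deduce the identity directly from formula \eqref{eqFtildeToH} (equivalently, Lemma \ref{lemmaHfromFt}), which for a Buchsbaum poset $S$ already expresses $h_q(S)$ through the numbers $\ft_k(S)$. First I would rewrite the defining sum \eqref{eqChi1def} via the substitution $k=n-p-1$: as $p$ runs from $0$ to $n-1$, $k$ runs from $n-1$ down to $0$, and $(-1)^p{p\choose q}\ft_{n-p-1}=(-1)^{n-k-1}{n-k-1\choose q}\ft_k(S)$. Since ${n-k-1\choose q}$ vanishes for $k>n-1-q$ and $\ft_k(S)$ vanishes for $k\geqslant n$, this yields
\[
\chi_q^1=\sum_{k\geqslant 0}(-1)^{n-k-1}{n-k-1\choose q}\ft_k(S),
\]
the full sum with no boundary subtleties.

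Next I would compare this with \eqref{eqFtildeToH}, taken with $i=q$ and multiplied through by $(-1)^q$ (using $(-1)^{n-k-q-1}(-1)^q=(-1)^{n-k-1}$):
\[
(-1)^q h_q(S)=(1-\chi(S)){n\choose q}+\sum_{k\geqslant 0}(-1)^{n-k-1}{n-k-1\choose q}\ft_k(S)=(1-\chi(S)){n\choose q}+\chi_q^1.
\]
Rearranging gives $\chi_q^1=(-1)^q h_q(S)+(\chi(S)-1){n\choose q}$, which is exactly the assertion.

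Thus the whole proof is a short bookkeeping argument once \eqref{eqFtildeToH} is in hand, and I do not expect a genuine obstacle. The only points that require care are the arithmetic of the index shift $k=n-p-1$ (getting the sign $(-1)^{n-k-1}$ and the binomial ${n-k-1\choose q}$ right), confirming that the summation ranges in \eqref{eqChi1def} and \eqref{eqFtildeToH} really coincide once the trivially vanishing terms are discarded, and noting that the Buchsbaum hypothesis on $S$, in force throughout this subsection, is precisely what licenses the use of \eqref{eqFtildeToH}.
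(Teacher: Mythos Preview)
Your proposal is correct and follows exactly the approach indicated in the paper's proof: the substitution $i=q$, $k=n-p-1$ in \eqref{eqFtildeToH} combined with the defining sum \eqref{eqChi1def}. You have simply spelled out the index bookkeeping and sign check that the paper leaves implicit.
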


\begin{proof}
Substitute $i=q$ and $k=n-p-1$ in \eqref{eqFtildeToH} and combine
with \eqref{eqChi1def}.
\end{proof}

\subsection{Ranks of $\Ex^{1+}_{*,*}$}

By construction of the extra page,
$\Ex^{1+}_{p,q}\cong\Ext^2_{p,q}$ for $p<n$. Let $\chi^2_q$ be the
Euler characteristic of $q$-th row of $\Ext^2_{*,*}$:
\begin{equation}\label{eqChiE2def}
\chi^2_q = \sum_p(-1)^p\dim\Ext^2_{p,q}.
\end{equation}
Euler characteristics of first and second terms coincide:
$\chi_q^2=\chi_q^1$. By statement \ref{stmThmSpecConvint},
$\dim\Ex^{1+}_{p,q}={n\choose q}b_p(S)$ for $q<p<n$. Lemma
\ref{lemmaChic1} yields
\[
(-1)^q\dim\Ex^{1+}_{q,q}+\sum_{p=q+1}^{n-1}(-1)^p{n\choose q}
b_p(S)=(\chi(S)-1){n\choose q}+(-1)^qh_q.
\]
By taking into account obvious relations between reduced and
non-reduced Betti numbers and equality
$\chi(S)=\sum_{p=0}^{n-1}b_p(S)$, this proves
Theorem~\ref{thmBorderStruct}.

\subsection{Manifold case}

If $X$ is a homology manifold, then Poincare duality
$b_i(S)=b_{n-i}(S)$ and Dehn--Sommerville relations
\eqref{eqDehnSomQuasiManifChir} imply
\begin{multline*}
\dim\Ex^{1+}_{q,q}=h_q+{n\choose q}\sum_{p=0}^q(-1)^{p+q}\br_p=\\
=h_q-(-1)^q{n\choose q}+{n\choose
q}\sum_{p=0}^q(-1)^{p+q}b_p=\\
=h_q-(-1)^q{n\choose q}+{n\choose
q}\sum_{p=n-1-q}^{n-1}(-1)^{n-1-p+q}b_p=\\
=h_{n-q}+(-1)^q{n\choose
q}\left[-(-1)^n+(-1)^n\chi+\sum_{p=n-1-q}^{n-1}(-1)^{n-1-p}b_p\right]=\\
=h_{n-q}+(-1)^q{n\choose q}
\left[-(-1)^n+\sum_{p=0}^{n-q-2}(-1)^{p+n}b_p \right].
\end{multline*}

The final expression in brackets coincides with
$\sum_{p=-1}^{n-q-2}(-1)^{p+n}\br_p(S)$ whenever the summation is
taken over nonempty set, i.e. for $q<n-1$. Thus
$\dim\Ex^{1+}_{q,q}=h'_{n-q}$ for $q<n-1$. In the case $q=n-1$ we
get $\dim\Ex^{1+}_{n-1,n-1}=h_1+{n\choose n-1}=h'_1+n$. This
proves part (1) of Theorem \ref{thmBorderManif}.

Part (2) follows easily. Indeed, for $q=n$:
\[
\dim\Ex^2_{n,n}=\dim\Ex^1_{n,n}={n\choose n}\dim H_n(Q,\dd
Q)=1=h'_0
\]
For $q=n-1$:
\[
\dim \Ex^2_{n-1,n-1}=\dim\Ex^{1+}_{n-1,n-1}-{n\choose n-1}\dim\im
\delta_n=h'_1.
\]
If $q\leqslant n-2$, then $\Ex^2_{q,q}=\Ex^{1+}_{q,q}$, and the
statement follows from part (1).

\subsection{Cone case}

If $Q = P(S) \cong \cone|S|$, then the map $\delta_i\colon
H_i(Q,\dd Q)\to \Hr_{i-1}(\dd Q)$ is an isomorphism. Thus for
$q\leqslant n-1$ statement \ref{stmThmSpecConvint} implies
\[
\dim \Ex^{\infty}_{q,q}= \dim \Ex^{1+}_{q,q}-{n\choose q}\dim
H_{q+1}(Q,\dd Q) = \dim \Ex^{1+}_{q,q}-{n\choose q}\br_q(S).
\]
By Theorem \ref{thmBorderStruct} this expression is equal to
\[
h_q(S)+{n\choose q}
\left[\sum_{p=0}^q(-1)^{p+q}\br_p(S)\right]-{n\choose q}\br_q(S) =
h_q(S)+{n\choose q}\sum_{p=0}^{q-1}(-1)^{p+q}\br_p(S)=h''_{q}(S).
\]
The case $q=n$ follows from \eqref{eqHnprime}. Indeed, the term
$\Ex^{1+}_{n,n}$ survives, thus:
\[
\dim \Ex^{\infty}_{n,n} = {n\choose n}\dim H_n(Q,\dd Q)=b_{n-1}(S)
= h_{n}'(S)=h_n''(S).
\]
This proves Theorem \ref{thmHtwoPrimes}.

\section{Geometry of equivariant cycles}\label{secEquivarGeometry}

\subsection{Orientations}
In this section we restrict to the case when $Q$ is a nice
manifold with corners, $X=(Q\times T^n)/\simc$ is a manifold with
locally standard torus action, $\lambda$ --- a characteristic map
over $\Zo$ defined on the poset $S=S_Q$. As before, suppose that
all proper faces of $Q$ are acyclic and orientable and $Q$ itself
is orientable. The subset $X_I$, $I\neq \varnothing$ is a
submanifold of $X$, preserved by the torus action; $X_I$ is called
a face manifold, $\codim X_I=2|I|$. Submanifolds $X_{\{i\}}$,
corresponding to vertices $i\in\ver(S)$ are called characteristic
submanifolds, $\codim X_{\{i\}}=2$.

Fix arbitrary orientations of the orbit space $Q$ and the torus
$T^n$. This defines the orientation of $Y=Q\times T^n$ and
$X=Y/\simc$. Also choose an omniorientation, i.e. orientations of
all characteristic submanifolds $X_{\{i\}}$. The choice of
omniorientation defines characteristic values $\omega_i\in
H_1(T^n;\Zo)$ without ambiguity of sign (recall that previously
they were defined only up to units of $\ko$). To perform
calculations with the spectral sequences $\Ex$ and $\Ey$ we also
need to orient faces of $Q$.

\begin{lemma}[Convention]
The orientation of each simplex of $S$ (i.e. the sign convention
on $S$) defines the orientation of each face $F_I\subset Q$.
\end{lemma}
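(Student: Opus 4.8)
The plan is to establish a correspondence between orientations of a simplex $I\in S$ and orientations of the dual face $F_I\subset Q$, and to check it is compatible with the incidence structure already encoded by the sign convention. Recall that for $I\neq\varnothing$ the face $F_I$ is an acyclic orientable manifold with corners of dimension $n-|I|$, and that $F_I$ is cut out inside $Q$ as the transverse intersection of the $|I|$ characteristic submanifolds $X_{\{i\}}$ over the vertices $i\leqslant I$ — more precisely, $F_I=\bigcap_{i\leqslant I}F_{\{i\}}$ where $F_{\{i\}}$ are the facets of $Q$. Since $Q$ is already oriented and each facet $F_{\{i\}}$ is a codimension-one submanifold, an orientation of the normal bundle of $F_{\{i\}}$ together with the ambient orientation of $Q$ determines an orientation of $F_{\{i\}}$, and iterating this over a chain of facets containing $F_I$ determines an orientation of $F_I$. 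The content of the lemma is that the combined normal data along $F_I$ is exactly an orientation of the simplex $I$, i.e. an ordering of its vertices up to even permutation.

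First I would fix, for each vertex $i\in\ver(S)$, a coorientation of the facet $F_{\{i\}}\subset Q$; this is the geometric shadow of the omniorientation (the orientation of $X_{\{i\}}$), and it is canonically available once the omniorientation is chosen, since $X_{\{i\}}\to F_{\{i\}}$ is a $T^{n-1}$-bundle and $X$, $T^n$ are oriented. Next, for a simplex $I$ with vertices $i_1,\ldots,i_k$, I would declare the induced orientation of $F_I$ to be the one obtained by successively contracting the ambient orientation of $Q$ against the chosen coorientations of $F_{\{i_1\}},\ldots,F_{\{i_k\}}$ in that order. Swapping two consecutive vertices in the list introduces a sign $(-1)$ because the two corresponding normal directions anticommute under the contraction; hence the resulting orientation of $F_I$ depends only on the vertex set $\{i_1,\ldots,i_k\}$ together with a choice of sign, i.e. on an orientation of the simplex $I$. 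This gives a well-defined bijection between orientations of $I$ and the two orientations of $F_I$, for every $I\neq\varnothing$ (with $F_\varnothing=Q$ receiving the fixed orientation, corresponding to the empty product of coorientations).

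Finally I would verify compatibility with the square condition \eqref{eqSquareCond}: for $I<_2 J$ with the two intermediate simplices $J',J''$, the two ways of orienting $F_J$ — via $F_I\supset F_{J'}\supset F_J$ and via $F_I\supset F_{J''}\supset F_J$ — differ by a sign that matches $\inc{J}{J'}\inc{J'}{I}=-\inc{J}{J''}\inc{J''}{I}$ from \eqref{eqSignSquare}, since going around the square swaps the order in which the two extra normal coorientations are contracted. Thus the assignment $I\mapsto$ (orientation of $F_I$) is a genuine functorial upgrade of the sign convention: the incidence numbers $\inc{J}{I}$ for $I<_1 J$ computed from the boundary orientations of the faces $F_I\supset F_J$ coincide with the chosen sign convention on $S$. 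The main obstacle is bookkeeping rather than conceptual: one must pin down, once and for all, the convention for how an ambient orientation plus a normal coorientation yields an orientation of a submanifold (inner-normal-first versus outer-normal-first), keep it consistent through the iterated intersection, and then chase the signs around the elementary square; the reduction of a general pair of saturated chains to such squares is exactly the elementary-flip argument used in the proof that $\hh_q(I<_k J)$ is well defined.
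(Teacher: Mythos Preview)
Your proposal is correct and follows essentially the same approach as the paper: order the vertices of $I$ according to the simplex orientation, stack the corresponding normal directions to the facets, and use the ambient orientation of $Q$ to fix the orientation of $F_I$. The paper's proof is terser---it simply declares the convention $T_xF_I\oplus\nu_{i_1}\oplus\ldots\oplus\nu_{i_{n-q}}\cong T_xQ$ without spelling out the square-compatibility check you add---but the geometric content is identical.

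One small discrepancy worth noting: you source the facet coorientations from the omniorientation of the characteristic submanifolds $X_{\{i\}}$, whereas the paper uses the \emph{canonical} inward-normal orientation of $\nu_i$ (``inward normal vectors are positive''), which is intrinsic to the manifold-with-corners structure and does not invoke the omniorientation at all. Both choices work, but the paper's is more economical and keeps the lemma independent of the characteristic data; the omniorientation is brought in only afterwards, when orienting the quotient tori $T^n/T^{\lambda(I)}$.
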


\begin{proof}
Suppose that $I\in S$ is oriented. Let $i_1,\ldots,i_{n-q}$ be the
vertices of $I$, listed in a positive order (this is where the
orientation of $I$ comes in play). The corresponding face $F_I$
lies in the intersecion of facets $F_{i_1},\ldots,F_{i_{n-q}}$.
The normal bundles $\nu_{i}$ to facets $F_{i}$ have natural
orientations, in which inward normal vectors are positive. Orient
$F_I$ in such way that
$T_xF_I\oplus\nu_{i_i}\oplus\ldots\oplus\nu_{i_{n-q}}\cong T_xQ$
is positive.
\end{proof}

Thus there are distinguished elements $[F_I]\in H_{\dim
F_I}(F_I,\dd F_I)$. One checks that for $I<_1J$ the maps
\[
m^0_{I,J}\colon H_{\dim F_I}(F_I,\dd F_I)\to H_{\dim F_I-1}(\dd
F_I)\to H_{\dim F_J}(F_J,\dd F_J)
\]
(see \eqref{eqMattachingMap}) send $[F_I]$ to
$\inc{J}{I}\cdot[F_J]$. Thus the restriction maps $\hh_0(I<J)$
send $[F_I]$ to $[F_J]$ by the definition of $\hh_0$.

The choice of omniorientation and orientations of $I\in S$
determines the orientation of each orbit $T^n/T^{\lambda(I)}$ by
the following convention.

\begin{con}
Let $i_1,\ldots,i_{n-q}$ be the vertices of $I$, listed in a
positive order. Recall that $H_1(T^n/T^{\lambda(I)})$ is naturally
identified with $H_1(T^n)/\I(I)^{(1)}$. The basis
$[\gamma_1],\ldots,[\gamma_q]\in H_1(T^n/T^{\lambda(I)})$,
$[\gamma_l]=\gamma_l+\I(I)^{(1)}$ is defined to be positive if the
basis
$(\omega_{i_1},\ldots,\omega_{i_{n-q}},\gamma_1,\ldots,\gamma_q)$
is positive in $H_1(T^n)$. The orientation of $T^n/T^{\lambda(I)}$
determines a distinguished ``volume form''
$\Omega_I=\bigwedge_l[\gamma_l]\in H_q(T^n/T^{\lambda(I)};\Zo)$.
\end{con}

The omniorientation and the orientation of $S$ also determine the
orientation of each manifold $X_I$ in a similar way. All
orientations are compatible: $[X_I]=[F_I]\otimes\Omega_I$.

\subsection{Arithmetics of torus quotients}
Let us fix a positive basis $e_1,\ldots,e_n$ of the lattice
$H_1(T^n;\Zo)$. Let $(\lambda_{i,1},\ldots,\lambda_{i,n})$ be the
coordinates of $\omega_i$ in this basis for each $i\in \ver(S)$.
The following technical lemma will be used in subsequent
computations.

\begin{lemma}\label{lemmaToricCoefficient}
Let $I\in S$, $I\neq\varnothing$ be a simplex with vertices
$\{i_1,\ldots,i_{n-q}\}$ listed in a positive order. Let
$A=\{j_1<\ldots<j_q\}\subset [n]$ be a subset of indices and
$e_A=e_{j_1}\wedge\ldots\wedge e_{j_q}$ the corresponding element
of $H_q(T^n)$. Consider the map $\rho\colon T^n\to
T^n/T^{\lambda(I)}$. Then $\rho_*(e_A) = C_{A,I}\Omega_I\in
H_q(T^n/T^{\lambda(I)})$ with the constant:
\[
C_{A,I}=\sgn_A\det\left(\lambda_{i,j}\right)_{\begin{subarray}{l}
i\in \{i_1,\ldots,i_{n-q}\}\\j\in [n]\setminus A
\end{subarray}}
\]
where $\sgn_A=\pm1$ depends only on $A\subset[n]$.
\end{lemma}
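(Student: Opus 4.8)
The plan is to reduce the statement to a linear-algebra computation about the projection $\rho_*$ on top exterior powers of $H_1(T^n)$. First I would set up coordinates: write $\Lambda^{(1)}(I)\cong H_1(T^n;\Zo)\cong\Zo^n$ with the fixed positive basis $e_1,\ldots,e_n$, and recall that $T^{\lambda(I)}\subset T^n$ is the subtorus generated by the one-parameter subgroups $\lambda(i_1),\ldots,\lambda(i_{n-q})$, whose homology classes are $\omega_{i_1},\ldots,\omega_{i_{n-q}}$ with coordinate rows $(\lambda_{i,j})_{j\in[n]}$. The $\sta{\Zo}$-condition guarantees these vectors span a rank-$(n-q)$ direct summand, so $H_1(T^n/T^{\lambda(I)})$ is the quotient lattice $\Zo^n/\langle\omega_{i_1},\ldots,\omega_{i_{n-q}}\rangle$, which is free of rank $q$, and $\rho_*$ is the quotient map on $H_1$, extended multiplicatively to $\Lambda^*$.

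Next I would identify the ``volume form'' $\Omega_I\in H_q(T^n/T^{\lambda(I)})$ with a generator of $\Lambda^q(H_1(T^n/T^{\lambda(I)}))\cong\Zo$, where by the orientation convention $\Omega_I=[\gamma_1]\wedge\cdots\wedge[\gamma_q]$ for any basis with $(\omega_{i_1},\ldots,\omega_{i_{n-q}},\gamma_1,\ldots,\gamma_q)$ positive in $H_1(T^n)$. The core computation is then: for $A=\{j_1<\cdots<j_q\}\subset[n]$, express $\rho_*(e_A)$ as a multiple of $\Omega_I$. I would do this by comparing two bases of the top exterior power $\Lambda^n H_1(T^n)\cong\Zo$: on one hand $e_1\wedge\cdots\wedge e_n$ (the chosen positive generator), on the other $\omega_{i_1}\wedge\cdots\wedge\omega_{i_{n-q}}\wedge e_{k_1}\wedge\cdots\wedge e_{k_q}$ where $\{k_1<\cdots<k_q\}=[n]\setminus A$. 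Expanding the wedge of the $\omega$'s and extracting the coefficient of $e_{[n]\setminus A}$ in the complementary slot gives precisely $\det(\lambda_{i,j})_{i\in\{i_1,\ldots,i_{n-q}\},\,j\in[n]\setminus A}$ times $e_1\wedge\cdots\wedge e_n$, up to the sign $\sgn_A$ of the shuffle permutation rearranging $(k_1,\ldots,k_q,j_1,\ldots,j_q)$ (equivalently, reordering $[n]\setminus A$ followed by $A$) into $(1,2,\ldots,n)$; this sign depends only on $A$. Then I would observe that $\rho_*(e_A)=C_{A,I}\Omega_I$ exactly when this determinant-with-sign is the ratio, because $\rho_*$ kills $\omega_{i_1},\ldots,\omega_{i_{n-q}}$ and the induced map on the top power sends $e_{[n]\setminus A}$ (read modulo the $\omega$'s, i.e.\ a lift of a complementary basis of the quotient) to the appropriate multiple of $\Omega_I$ fixed by the positivity convention.

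The main obstacle I expect is bookkeeping the sign $\sgn_A$ carefully and confirming it is independent of $I$ and of the choice of $\gamma_l$'s: one must check that changing the complementary basis $\gamma_1,\ldots,\gamma_q$ (within the positivity constraint) rescales nothing since it preserves $\Omega_I$, and that the shuffle sign arising from interleaving $A$ and $[n]\setminus A$ genuinely factors out of the determinant without reference to the $\omega$'s. I would handle this by working entirely inside $\Lambda^n(H_1(T^n))$, where everything is a single integer once a generator is fixed, reducing all sign ambiguities to one explicit permutation; the determinant formula then drops out of Laplace expansion along the rows indexed by the $\omega_i$'s. The acyclicity and Buchsbaum hypotheses play no role here --- this is a purely arithmetic lemma about the fixed characteristic data --- so no earlier homological results are needed beyond the identification $H_1(T^n/T^{\lambda(I)})\cong H_1(T^n)/\I(I)^{(1)}$ recorded in Lemma \ref{lemmaIdealQuotTorus} and the orientation conventions just established.
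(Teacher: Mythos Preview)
Your approach is correct and in fact slightly more elementary than the paper's. The paper chooses an explicit complementary basis $\gamma_1,\ldots,\gamma_q$, forms the change-of-basis matrix $U$ from $(e_l)$ to $(\omega_{i_1},\ldots,\omega_{i_{n-q}},\gamma_1,\ldots,\gamma_q)$, expands $e_A$ in the new basis via minors of $V=U^{-1}$, and then invokes Jacobi's complementary-minor identity to rewrite the surviving $q\times q$ minor of $V$ as the desired $(n-q)\times(n-q)$ minor of $U$. Your route bypasses both the auxiliary $\gamma$'s and Jacobi: wedging with $\pi_I=\omega_{i_1}\wedge\cdots\wedge\omega_{i_{n-q}}$ induces an isomorphism $(\Lambda/\I(I))^{(q)}\stackrel{\cong}{\to}\Lambda^{(n)}$ carrying $\Omega_I$ to the positive generator $e_1\wedge\cdots\wedge e_n$, so $C_{A,I}$ is the coefficient of $e_1\wedge\cdots\wedge e_n$ in $\pi_I\wedge e_A$, read off by a single Laplace expansion. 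The two computations are equivalent (your argument amounts to the special case of Jacobi's identity the paper quotes), but yours is more self-contained and makes the independence of $\sgn_A$ from $I$ transparent.

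One notational slip to fix: you write ``$\{k_1<\cdots<k_q\}=[n]\setminus A$'', but $[n]\setminus A$ has $n-q$ elements. The second $n$-form you want is $\omega_{i_1}\wedge\cdots\wedge\omega_{i_{n-q}}\wedge e_A$ (the $e$'s indexed by $A$ itself), and the shuffle whose sign you record reorders the concatenation of $[n]\setminus A$ (length $n-q$) with $A$ (length $q$) into $(1,\ldots,n)$. With that correction the bookkeeping goes through.
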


\begin{proof}
Let $(b_l) =
(\omega_{i_1},\ldots,\omega_{i_{n-q}},\gamma_1,\ldots,\gamma_q)$
be a positive basis of lattice $H_1(T^n,\Zo)$. Thus $b_l=Ue_l$,
where the matrix $U$ has the form
\[
U=\begin{pmatrix}
\lambda_{i_1,1}&\ldots& \lambda_{i_{n-q},1} & * & *\\
\lambda_{i_1,2}&\ldots& \lambda_{i_{n-q},2} & * & *\\
\vdots &\ddots & \vdots & \vdots & \vdots\\
\lambda_{i_1,n}&\ldots& \lambda_{i_{n-q},n} & * & *
\end{pmatrix}
\]
We have $\det U=1$ since both bases are positive. Consider the
inverse matrix $V=U^{-1}$. Thus
\[
e_A=e_{j_1}\wedge\ldots\wedge
e_{j_q}=\sum_{M=\{\alpha_1<\ldots<\alpha_q\}\subset[n]}
\det\left(V_{j,\alpha}\right)_{\begin{subarray}{l} j\in
A\\\alpha\in M \end{subarray}}b_{\alpha_1}\wedge\ldots\wedge
b_{\alpha_q}.
\]
After passing to quotient $\Lambda\to \Lambda/\I(I)$ all summands
with $M\neq\{n-q+1,\ldots,n\}$ vanish. When
$M=\{n-q+1,\ldots,n\}$, the element $b_{n-q-1}\wedge\ldots\wedge
b_{n} = \gamma_1\wedge\ldots \wedge\gamma_q$ goes to $\Omega_I$.
Thus
\[
C_{A,I} = \det\left(V_{j,\alpha}\right)_{\begin{subarray}{l}j\in
A\\\alpha\in\{n-q+1,\ldots,n\}\end{subarray}}.
\]
Now apply Jacobi's identity which states the following (see e.g.
\cite[Sect.4]{BruShn}). Let $U$ be an invertible $n\times
n$-matrix, $V=U^{-1}$, $M,N\subset[n]$ subsets of indices,
$|M|=|N|=q$. Then
\[
\det\left(V_{r,s}\right)_{\begin{subarray}{l} r\in M\\s\in
N\end{subarray}}= \frac{\sgn_{M,N}}{\det U}
\det\left(U_{r,s}\right)_{\begin{subarray}{l}r\in [n]\setminus
N\\s\in [n]\setminus M\end{subarray}},
\]
where $\sgn_{M,N}=(-1)^{\sum_{r\in [n]\setminus N}r +\sum_{s\in
[n]\setminus M}s}$. In our case $N=\{n-q+1,\ldots,n\}$; thus the
sign depends only on $A\subset [n]$.
\end{proof}

\subsection{Face ring and linear system of parameters}

Recall the definition of a face ring of a simplicial poset $S$.
For $I_1,I_2\in S$ let $I_1\vee I_2\subset S$ denote the set of
least upper bounds, and $I_1\cap I_2\in S$ --- the intersection of
simplices (it is well-defined and unique if $I_1\vee
I_2\neq\varnothing$).

\begin{defin}
The face ring $\ko[S]$ is the quotient ring of $\ko[v_I\mid I\in
S]$, $\deg v_I = 2|I|$ by the relations
\[
v_{I_1}\cdot v_{I_2}=v_{I_1\cap I_2}\cdot\sum_{J\in I_1\vee
I_2}v_J,\qquad\quad v_{\varnothing}=1,
\]
where the sum over an empty set is assumed to be $0$.
\end{defin}

Characteristic map $\lambda$ determines the set of linear forms
$\{\theta_1,\ldots,\theta_n\}\subset \ko[S]$,
$\theta_j=\sum_{i\in\ver(S)} \lambda_{i,j}v_i$. If $J\in S$ is a
maximal simplex, $|J|=n$, then
\begin{equation}\label{eqMinorCharMap}
\mbox{the matrix }(\lambda_{i,j})_{\begin{subarray}{l} i\leqslant
J\\j\in [n]\end{subarray}}\mbox{ is invertible over }\ko
\end{equation}
by the $\sta{\ko}$-condition. Thus the sequence
$\{\theta_1,\ldots,\theta_n\}\subset \ko[S]$ is a linear system of
parameters in $\ko[S]$ (see, e.g.,\cite[lemma 3.5.8]{BPnew}). It
generates an ideal $(\theta_1,\ldots,\theta_n)\subset \ko[S]$
which we denote by $\Theta$.

The face ring $\ko[S]$ is an algebra with straightening law (see,
e.g. \cite[\S.3.5]{BPnew}). Additively it is freely generated by
the elements
\[
P_\sigma=v_{I_1}\cdot v_{I_2}\cdot\ldots\cdot v_{I_t},\qquad
\sigma=(I_1\leqslant I_2\leqslant\ldots\leqslant I_t).
\]

\begin{lemma}\label{lemmaFaceRingBasis}
The elements $[v_I]=v_I+\Theta$ additively generate
$\ko[S]/\Theta$.
\end{lemma}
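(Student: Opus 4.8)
The elements $[v_I]=v_I+\Theta$ additively generate $\ko[S]/\Theta$.

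\textbf{Proof plan.}

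The plan is to use the straightening–law basis of $\ko[S]$ recalled just above, namely the standard monomials $P_\sigma = v_{I_1}\cdots v_{I_t}$ indexed by multichains $\sigma\colon I_1\leqslant\cdots\leqslant I_t$ in $S$ (the empty chain giving $1$). Write $V\subseteq\ko[S]/\Theta$ for the $\ko$-submodule generated by $[1]$ and all $[v_I]$. It suffices to prove $[v_I]\cdot[v_{I'}]\in V$ for all $I,I'\in S$: an induction on the number of factors of $P_\sigma$ then gives $[P_\sigma]\in V$, since one may replace $[v_{I_1}][v_{I_2}]$ by an element of $V$, re‑expand everything in the standard‑monomial basis (straightening a two–factor product never raises the number of factors), and apply the induction hypothesis. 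Moreover, when $I,I'$ are incomparable the relation $v_Iv_{I'}=v_{I\cap I'}\sum_{K\in I\vee I'}v_K$ reduces the claim either to $\sum_K[v_K]\in V$ (immediate) or to products $[v_{I\cap I'}][v_K]$ with $I\cap I'\leqslant K$; so it is enough to treat $[v_I][v_{I'}]$ with $I\leqslant I'$ and $I\neq\varnothing$.

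For such a pair I will induct on $m=|I|$. Since $S=S_Q$ is pure (Corollary \ref{corQbuchSbuch}) choose a maximal simplex $J\geqslant I'$; by \eqref{eqMinorCharMap} the submatrix $(\lambda_{i,j})_{i\leqslant J,\,j}$ is invertible over $\ko$, so for any vertex $i_0\leqslant I'$ a suitable $\ko$–combination $\sum_j\mu_j\theta_j$ (with $(\mu_j)_j$ a column of the inverse matrix) has $\sum_j\mu_j\lambda_{i,j}=\delta_{i,i_0}$ for all $i\leqslant I'$. For the base case $m=1$, write $I=\{i_0\}$ and expand $0\equiv\theta_j v_{I'}=\sum_{i\in\ver(S)}\lambda_{i,j}\,v_iv_{I'}\pmod\Theta$: if $i$ is not a vertex of $I'$ then $\{i\}\cap I'=\varnothing$, so $v_iv_{I'}=\sum_{K\in\{i\}\vee I'}v_K$ already represents an element of $V$, while for $i\leqslant I'$ the product $v_iv_{I'}$ is a standard monomial. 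Taking the combination of these $n$ congruences isolating $i_0$ yields $[v_{i_0}][v_{I'}]\in V$.

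For the inductive step $m\geqslant 2$, pick a vertex $i_0\leqslant I$ and let $\hat I<_1 I$ be the unique facet of $I$ with $i_0\not\leqslant\hat I$ (unique as $S_{\leqslant I}$ is boolean); then $I\in\{i_0\}\vee\hat I$, so the straightening relation gives $v_{i_0}v_{\hat I}=v_I+\sum_{K\in\{i_0\}\vee\hat I,\,K\neq I}v_K$, whence
\[
v_Iv_{I'}=v_{i_0}\bigl(v_{\hat I}v_{I'}\bigr)-\sum_{K\in\{i_0\}\vee\hat I,\;K\neq I}v_Kv_{I'}.
\]
Here $[v_{\hat I}][v_{I'}]\in V$ by induction ($|\hat I|=m-1$), so $[v_{i_0}]\cdot[v_{\hat I}v_{I'}]$ is a $\ko$–combination of $[v_{i_0}]$ and terms $[v_{i_0}][v_L]$, each of which lies in $V$ either trivially (when $i_0\not\leqslant L$) or by the case $m=1$ (when $i_0\leqslant L$). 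For the remaining sum, the point is that no $K\in\{i_0\}\vee\hat I$ with $K\neq I$ can satisfy $K\leqslant I'$: inside the boolean lattice $S_{\leqslant I'}$ such a $K$ would lie above both $\hat I$ and $i_0$, hence above their join $\hat I\cup\{i_0\}=I$, forcing $K=I$ by equality of ranks. Thus $K$ is incomparable to $I'$, and $K\cap I'=\hat I$ (its rank is $<m$ but $\geqslant|\hat I|$), so $v_Kv_{I'}=v_{\hat I}\sum_{M\in K\vee I'}v_M$ and $[v_Kv_{I'}]$ is a combination of products $[v_{\hat I}][v_M]$, again in $V$ by induction. This closes the induction. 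The step I expect to need the most care is exactly this last observation — that every $K\in\{i_0\}\vee\hat I$ other than $I$ is incomparable to $I'$ with $K\cap I'=\hat I$ — since it is what stops the recursion from looping back to products of the same rank $m$, and it rests on the boolean structure of the lower intervals of $S$.
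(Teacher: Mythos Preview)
Your proof is correct. The approach differs from the paper's in its organisation, though the algebraic core is the same: both arguments use the invertibility condition \eqref{eqMinorCharMap} to eliminate, modulo $\Theta$, any $v_i$ with $i$ a vertex of a chosen maximal simplex. The paper runs a single descent directly on standard monomials $P_\sigma=v_{I_1}\cdots v_{I_t}$: peel off a vertex $i\leqslant I_1$, rewrite $v_i$ modulo $\Theta$ as a combination of $v_{i'}$ with $i'\nleq I_t$ (so that $v_{i'}v_{I_t}$ is a combination of $v_{I_t'}$ with $I_t'>_1 I_t$), and observe that the resulting $P_{\sigma'}$'s have either smaller $t$ or smaller $|I_1|$. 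You instead first reduce to products of two factors (using that the straightening relation in $\ko[S]$ never increases the number of factors), then handle comparable two-factor products $v_Iv_{I'}$ by induction on $|I|$, with a careful case analysis showing that every auxiliary product that appears has strictly smaller first factor. Your route is longer and requires the extra combinatorial check you flagged (that any $K\in\{i_0\}\vee\hat I$ with $K\neq I$ is incomparable to $I'$ with $K\cap I'=\hat I$), but it makes the two-factor mechanism completely explicit; the paper's argument is terser but leaves the termination of the descent more implicit.
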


\begin{proof}
Consider an element $P_\sigma$ with $|\sigma|\geqslant 2$. Using
relations in the face ring, we express $P_{\sigma}=v_{I_1}\cdot
\ldots\cdot v_{I_t}$ as $v_{i}\cdot v_{I_1\setminus i}\cdot
\ldots\cdot v_{I_t}$, for some vertex $i\leqslant I_1$. The
element $v_i$ can be expressed as $\sum_{i'\nleq I_t}a_{i'}v_{i'}$
modulo $\Theta$ according to \eqref{eqMinorCharMap} (we can
exclude all $v_i$ corresponding to the vertices of some maximal
simplex $J\supseteq I_t$). Thus $v_iv_{I_t}$ is expressed as a
combination of $v_{I_t'}$ for $I_t'>_1I_t$. Therefore, up to ideal
$\Theta$, the element $P_{\sigma}$ is expressed as a linear
combination of elements $P_{\sigma'}$ which have either smaller
length $t$ (in case $|I_1|=1$) or smaller $I_1$ (in case
$|I_1|>1$). By iterating this descending process, the element
$P_{\sigma}+\Theta\in \ko[S]/\Theta$ is expressed as a linear
combination of $[v_I]$.
\end{proof}

Note that the proof works for $\ko=\Zo$ as well.

\subsection{Linear relations on equivariant (co)cycles}

Let $H^*_T(X)$ be a $T^n$-equi\-va\-ri\-ant cohomology ring of
$X$. Any proper face of $Q$ is acyclic, thus has a vertex.
Therefore, there is the injective homomorphism
\[
\ko[S]\hookrightarrow H_T^*(X),
\]
which sends $v_I$ to the cohomology class, equivariant Poincare
dual to $[X_I]$ (see \cite[Lemma 6.4]{MasPan}). The inclusion of a
fiber in the Borel construction, $X\to X\times_TET^n$, induces the
map $H_T^*(X)\to H^*(X)$. The subspace $V$ of $H_*(X)$, Poincare
dual to the image of
\begin{equation}\label{eqFaceToCohomology}
g\colon\ko[S]\hookrightarrow H_T^*(X)\to H^*(X)
\end{equation}
is generated by the elements $[X_I]$, thus coincides with the
$\infty$-border: $V=\bigoplus_q\Ex^{\infty}_{q,q}\subset H_*(X)$.
Now let us describe explicitly the linear relations on $[X_I]$ in
$H_*(X)$. Note that the elements $[X_I]=[F_I]\otimes\Omega_I$ can
also be considered as the free generators of the $\ko$-module
\[
\bigoplus_q\Ex^1_{q,q}=\bigoplus_q\bigoplus_{|I|=n-q} H_q(F_I,\dd
F_I)\otimes H_q(T^n/T^{\lambda(I)}).
\]
The free $\ko$-module on generators $[X_I]$ is denoted by
$\langle[X_I]\rangle$.

\begin{prop}\label{propTwoTypesOfRels}
Let $C_{A,J}$ be the constants defined in lemma
\ref{lemmaToricCoefficient}. There are only two types of linear
relations on classes $[X_I]$ in $H_*(X)$:
\begin{enumerate}
\item For each $J\in S$, $|J|=n-q-1$, and $A\subset [n]$, $|A|=q$
there is a relation
\[
R_{J,A}=\sum_{I>_1J}\inc{I}{J}C_{A,I}[X_I]=0;
\]
\item Let $\beta$ be a homology class from $\im(\delta_{q+1}\colon
H_{q+1}(Q,\dd Q)\to H_q(\dd Q))\subseteq\Eqt^{\infty}_{q,0}$ for
$q\leqslant n-2$, and let $\sum_{|I|=n-q}B_I[F_I]\in \Eqt^1_{q,0}$
be a chain representing $\beta$. Then
\[
R'_{\beta,A}=\sum_{|I|=n-q}B_IC_{A,I}[X_I]=0.
\]
\end{enumerate}
\end{prop}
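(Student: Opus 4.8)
The plan is to identify the kernel of the map $\langle[X_I]\rangle \to H_*(X)$ directly from the spectral sequence picture. Recall that $\langle[X_I]\rangle = \bigoplus_q \Ex^1_{q,q}$, and the image of this module in $H_*(X)$ is exactly the border $\bigoplus_q \Ex^\infty_{q,q} = V$. So the kernel is the sum of (a) the classes killed when passing from $\Ex^1$ to $\Ex^2$ (equivalently $\Ex^{1+}$), i.e. the image of the incoming differential $\dx^1$ into $\Ex^1_{q,q}$ from $\Ex^1_{q+1,q}$; and (b) the classes killed by the higher differentials landing on the border, which by Statement \ref{stmThmSpecConvint} are the differentials $\dx^r\colon \Ex^*_{n,q-n+q_1}\supseteq H_{q+1}(Q,\dd Q)\otimes\Lambda^{(q)} \to \Ex^*_{q,q}$ identified with $f_*^{1+}\circ(\delta_{q+1}\otimes\id)$. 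I would treat these two sources of relations in turn, matching them to relations (1) and (2) respectively.

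For relations of type (1): the differential $\dx^1\colon \Ex^1_{q+1,q}\to\Ex^1_{q,q}$ is, by subsection \ref{subsecStructOfEX1}, the cochain differential of the sheaf $\hh^X_q$, i.e. the sum over $J<_1 I$ of $\inc{I}{J}$ times the attaching map $m^q_{J,I}$. A generator of $\Ex^1_{q+1,q} = \bigoplus_{|J|=n-q-1} H_{q+1}(X_J,\dd X_J)$ has the form $[F_J]\otimes e_A$ for $|J|=n-q-1$ and $A\subset[n]$ with $|A|=q$ (using the trivialization $H_*(X_J,\dd X_J)\cong H_*(F_J,\dd F_J)\otimes H_*(T^n/T^{\lambda(J)})$ and then pulling back $e_A$, or more precisely tracking which classes in $H_{q+1}(X_J,\dd X_J)$ map where). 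Applying $\dx^1$ and using that $m^0_{J,I}$ sends $[F_J]\mapsto \inc{I}{J}[F_I]$ together with Lemma \ref{lemmaToricCoefficient}, which computes that the torus quotient map $\rho\colon T^n\to T^n/T^{\lambda(I)}$ sends $e_A$ to $C_{A,I}\Omega_I$, yields precisely $R_{J,A}=\sum_{I>_1 J}\inc{I}{J}C_{A,I}[X_I]$. The fact that these span the image of $\dx^1$ into the border follows because the generators $[F_J]\otimes e_A$ span $\Ex^1_{q+1,q}$.

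For relations of type (2): by Statement \ref{stmThmSpecConvint}, the higher differentials landing on $\Ex^*_{q,q}$ ($q\leqslant n-2$) are $f^{1+}_*\circ(\delta_{q+1}\otimes\id_\Lambda)$, so a class $\beta\otimes e_A$ with $\beta\in\im\delta_{q+1}\subseteq \Eqt^\infty_{q,0}$ maps to zero in $H_*(X)$; writing $\beta=\sum_{|I|=n-q}B_I[F_I]$ as a chain in $\Eqt^1_{q,0}$ and again applying Lemma \ref{lemmaToricCoefficient} to convert $e_A$ into $\Omega_I$-coefficients gives $R'_{\beta,A}=\sum_{|I|=n-q}B_I C_{A,I}[X_I]=0$. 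Conversely, that relations (1) and (2) exhaust all linear dependencies is the content of the computation that $\bigoplus_q\Ex^\infty_{q,q}$ is obtained from $\bigoplus_q\Ex^1_{q,q}$ by quotienting by exactly these two images; since the border receives no other differentials (everything above the border vanishes, $\Ex^*_{p,q}=0$ for $q>p$), this is complete. The main obstacle I anticipate is bookkeeping: one must be careful that the "$e_A$ coordinate" of a relative homology class $H_{q+1}(X_J,\dd X_J)$ is well-defined and that the sign conventions on $S$, the omniorientation, and the orientation conventions of subsection on Orientations all line up so that the coefficients come out exactly as $\inc{I}{J}C_{A,I}$ with no stray signs — the sign $\sgn_A$ in Lemma \ref{lemmaToricCoefficient} is harmless since it is a global factor on each relation $R_{J,A}$ (resp. $R'_{\beta,A}$), but verifying the incidence signs match requires the compatibility $[X_I]=[F_I]\otimes\Omega_I$ established earlier.
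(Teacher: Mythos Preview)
Your proposal is correct and follows essentially the same approach as the paper: the paper's proof likewise identifies the relations on the classes $[X_I]$ as precisely the images of the differentials hitting the border $\Ex^r_{q,q}$, with the first-type relations $R_{J,A}$ coming from $\dx^1\colon\Ex^1_{q+1,q}\to\Ex^1_{q,q}$ and the second-type relations $R'_{\beta,A}$ from the higher differentials $\dx^r$, $r\geqslant 2$, invoking Lemma~\ref{lemmaToricCoefficient} and Theorem~\ref{thmTwoSpecSeqGeneral}. Your write-up is more explicit than the paper's (which is a one-paragraph sketch), and your closing remarks about sign bookkeeping correctly flag the only subtlety.
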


\begin{proof}
This follows from the structure of the map
$f_*\colon\Eq^*_{*,*}\times H_*(T^n)\to\Ex^*_{*,*}$, lemma
\ref{lemmaToricCoefficient} and Theorem
\ref{thmTwoSpecSeqGeneral}. Relations on $[X_I]$ appear as the
images of the differentials hitting $\Ex^r_{q,q}$, $r\geqslant 1$.
Relations of the first type, $R_{J,A}$, are the images of
$\dx^1\colon \Ex^1_{q+1,q}\to \Ex^1_{q,q}$. In particular,
$\bigoplus_q\Ex^2_{q,q}$ is identified with $\langle
[X_I]\rangle/\langle R_{J,A}\rangle$. Relations of the second type
are the images of higher differentials $\dx^r$, $r\geqslant 2$.
\end{proof}

Now we check that relations of the first type are exactly the
relations in the quotient ring $\ko[S]/\Theta$.

\begin{prop}
Let $\varphi\colon\langle[X_I]\rangle\to \ko[S]$ be the degree
reversing linear map, which sends $[X_I]$ to $v_I$. Then $\varphi$
descends to the isomorphism
\[
\tilde{\varphi}\colon \langle[X_I]\rangle/\langle
R_{J,A}\rangle\to \ko[S]/\Theta.
\]
\end{prop}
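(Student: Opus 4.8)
The plan has three parts: check that $\varphi$ annihilates each relation $R_{J,A}$, so that $\tilde{\varphi}$ is defined; note that $\tilde{\varphi}$ is surjective by Lemma \ref{lemmaFaceRingBasis}; and prove injectivity by a graded dimension count. For the first part, fix $J\in S$ of rank $n-q-1$ with vertices $j_1,\ldots,j_{n-q-1}$ in positive order and $A\subset[n]$ with $|A|=q$, and put $B=[n]\setminus A$, $|B|=n-q$. Each $I>_1J$ has the form $I=J\cup\{i_I\}$ for a unique extra vertex $i_I$, and for a vertex $i\not\leqslant J$ the set $\{i\}\vee J$ consists precisely of those $I>_1J$ with $i_I=i$. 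By the orientation conventions of Section \ref{secEquivarGeometry}, $\inc{I}{J}$ is exactly the permutation sign relating the positive order of the vertices of $I$ to the list $(j_1,\ldots,j_{n-q-1},i_I)$; since, by Lemma \ref{lemmaToricCoefficient}, $C_{A,I}=\sgn_A\det(\lambda_{i',b})$ with $i'$ running over the vertices of $I$ in positive order and $b$ over $B$, the two occurrences of $\inc{I}{J}$ cancel and
\[
\inc{I}{J}\,C_{A,I}=\sgn_A\det\bigl(\lambda_{i',b}\bigr)_{i'\in(j_1,\ldots,j_{n-q-1},\,i_I),\ b\in B}.
\]
Laplace-expanding along the last row gives $\inc{I}{J}C_{A,I}=\sgn_A\sum_{b\in B}\varepsilon_bD_b\,\lambda_{i_I,b}$, where $D_b=\det(\lambda_{j_s,\beta})_{1\leqslant s\leqslant n-q-1,\ \beta\in B\setminus\{b\}}$ and $\varepsilon_b=\pm1$ depend only on $(J,b,B)$, not on $I$. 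The straightening relations in $\ko[S]$ give $v_J\theta_b=\sum_{i\leqslant J}\lambda_{i,b}v_iv_J+\sum_{I>_1J}\lambda_{i_I,b}v_I$, and substituting yields
\[
\varphi(R_{J,A})=\sum_{I>_1J}\inc{I}{J}C_{A,I}\,v_I=\sgn_A\sum_{b\in B}\varepsilon_bD_b\Bigl(v_J\theta_b-\sum_{i\leqslant J}\lambda_{i,b}\,v_iv_J\Bigr).
\]
For each $i\leqslant J$ the coefficient $\sum_{b\in B}\varepsilon_bD_b\lambda_{i,b}$ equals $\pm\det(\lambda_{i',b})_{i'\in(j_1,\ldots,j_{n-q-1},i),\ b\in B}$, which vanishes because its last row repeats one of the others. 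Hence $\varphi(R_{J,A})=\sgn_A\sum_{b\in B}\varepsilon_bD_b\,v_J\theta_b\in\Theta$, so $\tilde{\varphi}$ is well defined, and Lemma \ref{lemmaFaceRingBasis} makes it surjective.

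For injectivity I would compare graded dimensions over the field $\ko$ (the integral case then follows from the field case together with the integral reductions in the proof of Lemma \ref{lemmaFaceRingBasis}). By Proposition \ref{propTwoTypesOfRels} the source $\langle[X_I]\rangle/\langle R_{J,A}\rangle$ is $\bigoplus_q\Ex^2_{q,q}$. In the manifold case $\dd Q\neq\varnothing$, so $H_n(Q)=0$ and $\delta_n\colon H_n(Q,\dd Q)\to H_{n-1}(\dd Q)$ is injective, while $H_n(Q,\dd Q)\cong\ko$ by Poincar\'e--Lefschetz duality; Theorem \ref{thmBorderManif}(2) then gives $\dim\Ex^2_{q,q}=h'_{n-q}(S)$ for all $q$. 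On the target side $S=S_Q$ is a homology manifold by Corollary \ref{corQbuchSbuch}, hence Buchsbaum, and $\dim_\ko(\ko[S]/\Theta)_{2k}=h'_k(S)$ by Schenzel's theorem on Buchsbaum face rings modulo a linear system of parameters. Since $\varphi$ reverses degree, carrying the $\Ex$-degree-$q$ summand of the source to the degree-$2(n-q)$ summand of $\ko[S]/\Theta$, the two spaces matched by $\tilde{\varphi}$ have the same dimension $h'_{n-q}(S)$; a surjection of finite-dimensional spaces of equal dimension is an isomorphism.

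The main obstacle is the sign bookkeeping in the first part: one has to pin down precisely that $\inc{I}{J}$ is the permutation sign between the positive order of $I$ and $(j_1,\ldots,j_{n-q-1},i_I)$, and then recognize the spurious monomials $v_iv_J$ with $i\leqslant J$ as determinants with a repeated row. A secondary point is that the dimension count imports the $h'$-Hilbert function of $\ko[S]/\Theta$ from commutative algebra (Schenzel); if a self-contained argument is wanted, injectivity can instead be obtained directly, by showing that every linear relation $\sum c_Iv_I\in\Theta$ among the $v_I$ is a $\ko$-combination of the $R_{J,A}$, through a careful elaboration of the reduction procedure of Lemma \ref{lemmaFaceRingBasis}.
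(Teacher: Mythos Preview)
Your proof is correct and follows essentially the same route as the paper: you show $\varphi(R_{J,A})\in\Theta$ via a Laplace expansion and the repeated-row observation (the paper phrases this as $\varphi(R_{J,A})=\pm v_J\cdot\sum_l a_l\theta_{\alpha_l}$ with cofactors $a_l$ and notes that the $v_Jv_i$ terms with $i\leqslant J$ cancel), invoke Lemma~\ref{lemmaFaceRingBasis} for surjectivity, and conclude by the same dimension count using Theorem~\ref{thmBorderManif}(2) and Schenzel's theorem. Your explicit verification that the hypotheses of Theorem~\ref{thmBorderManif}(2) hold in the manifold setting (namely $H_n(Q,\dd Q)\cong\ko$ and $\delta_n$ injective because $\dd Q\neq\varnothing$) is a detail the paper leaves implicit.
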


\begin{proof}
(1) First we prove that $\tilde{\varphi}$ is well defined. The
image of $R_{J,A}$ is the element
\[
\varphi(R_{J,A})=\sum_{I>_1J}\inc{I}{J}C_{A,I}v_I\in\ko[S].
\]
Let us show that $\varphi(R_{J,A})\in\Theta$. Let $s=|J|$, and
consequently, $|I|=s+1$, $|A|=n-s-1$. Let $[n]\setminus
A=\{\alpha_1<\ldots<\alpha_{s+1}\}$ and let $\{j_1,\ldots,j_s\}$
be the vertices of $J$ listed in a positive order. Consider
$s\times(s+1)$ matrix:
\[
D=\begin{pmatrix}
\lambda_{j_1,\alpha_1}&\ldots& \lambda_{j_1,\alpha_{s+1}}\\
\vdots &\ddots & \vdots\\
\lambda_{j_s,\alpha_1}&\ldots& \lambda_{j_s,\alpha_{s+1}}
\end{pmatrix}
\]
Denote by $D_l$ the square submatrix obtained from $D$ by deleting
$i$-th column and let $a_l=(-1)^{l+1}\det D_l$. We claim that
\[
\varphi(R_{J,A}) = \pm
v_J\cdot(a_1\theta_{\alpha_1}+\ldots+a_{s+1}\theta_{\alpha_{s+1}})
\]
Indeed, after expanding each $\theta_l$ as $\sum_{i\in
\ver(S)}\lambda_{i,l}v_i$, all elements of the form $v_Jv_{i}$
with $i<J$ cancel; others give $\inc{I}{J}C_{A,I}v_I$ for $I>_1J$
according to lemma \ref{lemmaToricCoefficient} and cofactor
expansions of determinants (the incidence sign arise from
shuffling columns). Thus $\tilde{\varphi}$ is well defined.

(2) $\tilde{\varphi}$ is surjective by lemma
\ref{lemmaFaceRingBasis}.

(3) The dimensions of both spaces are equal. Indeed, $\dim
\langle[X_I]\mid|I|=n-q\rangle/\langle R_{J,A}\rangle=\dim
\Ex^2_{q,q}=h'_{n-q}(S)$ by Theorem \ref{thmBorderManif}. But
$\dim(\ko[S]/\Theta)^{(n-q)}=h'_{n-q}(S)$ by Schenzel's theorem
\cite{Sch}, \cite[Ch.II,\S8.2]{St}, (or \cite[Prop.6.3]{NS} for
simplicial posets) since $S$ is Buchsbaum.

(4) If $\ko$ is a field, then we are done. This implies the case
$\ko=\Zo$ as well.
\end{proof}

In particular, this proposition describes the additive structure
of $\ko[S]/\Theta$ in terms of the natural additive generators
$v_I$. Poincare duality in $X$ yields

\begin{cor}\label{corKernel}
The map $g\colon \ko[S]\to H^*(X)$ factors through $\ko[S]/\Theta$
and the kernel of $\tilde{g}\colon \ko[S]/\Theta\to H^*(X)$ is
additively generated by the elements
\[
L'_{\beta,A}=\sum_{|I|=n-q}B_IC_{A,I}v_I
\]
where $q\leqslant n-2$, $\beta\in\im(\delta_{q+1}\colon
H_{q+1}(Q,\dd Q)\to H_q(\dd Q))$, $\sum_{|I|=n-q}B_I[F_I]$ is a
cellular chain representing $\beta$, and $A\subset [n]$, $|A|=q$.
\end{cor}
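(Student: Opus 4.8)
The plan is to deduce the corollary from Proposition~\ref{propTwoTypesOfRels}, the preceding proposition (the isomorphism $\tilde\varphi$), and Poincar\'e duality on the closed orientable manifold $X$. The first step is the factorization of $g$ through $\ko[S]/\Theta$. The ideal $\Theta$ is generated by the linear forms $\theta_1,\ldots,\theta_n$, and under the inclusion $\ko[S]\hookrightarrow H^*_T(X)$ these are exactly the image of $H^2(BT^n;\ko)$ pulled back along $X\times_T ET^n\to BT^n$. Since the composite $X\hookrightarrow X\times_T ET^n\to BT^n$ of the fiber inclusion with this projection is null-homotopic, the restriction map $H^{>0}(BT^n)\to H^*_T(X)\to H^*(X)$ vanishes, so $g(\Theta)=0$ and the induced map $\tilde g\colon\ko[S]/\Theta\to H^*(X)$ is well defined.

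Next I would pass to homology. Capping with the fundamental class gives an isomorphism $\cap[X]\colon H^*(X)\xrightarrow{\ \cong\ }H_*(X)$ (valid over $\Zo$ as well, since $X$ is closed orientable), so $\ker\tilde g=\ker(\cap[X]\circ\tilde g)$. The composite $\cap[X]\circ\tilde g$ is the surjection of $\ko[S]/\Theta$ onto the subspace $V\subset H_*(X)$ Poincar\'e dual to $\im g$, which by the discussion preceding Proposition~\ref{propTwoTypesOfRels} equals $\bigoplus_q\Ex^{\infty}_{q,q}$; concretely $v_I$ is sent to $[X_I]$, because the equivariant Poincar\'e dual of $[X_I]$ restricts to the ordinary Poincar\'e dual, which caps with $[X]$ to $[X_I]$. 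Using the preceding proposition, the isomorphism $\tilde\varphi$ identifies $\ko[S]/\Theta$ with $\bigoplus_q\Ex^2_{q,q}$ (with $v_I\leftrightarrow[X_I]$), and under this identification $\cap[X]\circ\tilde g$ becomes the canonical projection $\bigoplus_q\Ex^2_{q,q}\twoheadrightarrow\bigoplus_q\Ex^{\infty}_{q,q}$. No higher differential leaves the diagonal, since $\dx^r\colon\Ex^r_{q,q}\to\Ex^r_{q-r,q+r-1}$ lands in the region $q+r-1>q-r$ where $\Ex^r$ vanishes, so $\Ex^{\infty}_{q,q}$ is indeed a quotient of $\Ex^2_{q,q}$.

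Finally I would identify the kernel of $\bigoplus_q\Ex^2_{q,q}\twoheadrightarrow\bigoplus_q\Ex^{\infty}_{q,q}$, which is the span of the images of the higher differentials $\dx^r$, $r\geqslant2$, hitting the diagonal. By Proposition~\ref{propTwoTypesOfRels}(2) this subgroup is generated by the classes of the relations $R'_{\beta,A}$, for $\beta\in\im(\delta_{q+1}\colon H_{q+1}(Q,\dd Q)\to H_q(\dd Q))$ with $q\leqslant n-2$ and $A\subset[n]$, $|A|=q$. Transporting along $\tilde\varphi$ (which sends $[X_I]$ to $v_I$) turns $R'_{\beta,A}=\sum_{|I|=n-q}B_IC_{A,I}[X_I]$ into $L'_{\beta,A}=\sum_{|I|=n-q}B_IC_{A,I}v_I$, so $\ker\tilde g$ is additively generated by the $L'_{\beta,A}$, as claimed.

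The step I expect to need the most care --- although it is largely bookkeeping already carried out in the two preceding propositions --- is the compatibility statement that, after capping with $[X]$, the algebraic map $\tilde g$ coincides with the spectral-sequence projection: one must check that $g(v_I)\cap[X]$ is the face-manifold class $[X_I]$ and that $[X_I]$ sits in the correct graded piece of the orbit-type filtration on $H_*(X)$. A minor additional point is the passage to $\ko=\Zo$: as in the preceding proposition it suffices to argue over every field and observe that the generators $L'_{\beta,A}$ do not depend on the coefficients, whence the integral statement follows.
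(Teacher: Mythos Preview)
Your argument is correct and follows exactly the route the paper intends: the paper's proof is the single phrase ``Poincar\'e duality in $X$ yields'', and you have unpacked this by composing $\tilde g$ with $\cap[X]$, identifying the result (via the isomorphism $\tilde\varphi$ of the preceding proposition) with the projection $\bigoplus_q\Ex^2_{q,q}\twoheadrightarrow\bigoplus_q\Ex^\infty_{q,q}$, and then reading off its kernel from Proposition~\ref{propTwoTypesOfRels}(2). The factorization through $\ko[S]/\Theta$ via the null-homotopy of $X\to BT^n$ is exactly what the paper records in the remark following the corollary, and the compatibility $g(v_I)\cap[X]=[X_I]$ is what the paper states just before Proposition~\ref{propTwoTypesOfRels}; so the points you flag as needing care are already handled in the text.
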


\begin{rem}
The ideal $\Theta\subset \ko[S]$ coincides with the image of the
natural map $H^{>0}(BT^n)\to H^*_T(X)$. So the fact that $\Theta$
vanishes in $H^*(X)$ is not surprising. The interesting thing is
that $\Theta$ vanishes by geometrical reasons already in the
second term of the spectral sequence, while other relations in
$H^*(X)$ are the consequences of higher differentials.
\end{rem}

\begin{rem}
From the spectral sequence follows that the element
$L'_{\beta,A}\in\ko[S]/\Theta$ does not depend on the cellular
chain, representing $\beta$. All such chains produce the same
element in $\bigoplus_q\Ex^2_{q,q}=\langle[X_I]\rangle/\langle
R_{J,A}\rangle\cong \ko[S]/\Theta$. Theorem \ref{thmEX2structure}
also implies that the relations $\{L'_{\beta,A}\}$ are linearly
independent in $\ko[S]/\Theta$ when $\beta$ runs over some basis
of $\im\delta_{q+1}$ and $A$ runs over all subsets of $[n]$ of
cardinality $q$.
\end{rem}

\section{Examples and calculations}\label{secExamples}

\subsection{Quasitoric manifolds}\label{subsecQtoric}
Let $Q$ be $n$-dimensional simple polytope. Then $S=S_Q=\dd Q^*$
is the boundary of the polar dual polytope. In this case
$Q\cong\cone |S|$. Given a characteristic map $\lambda\colon
\ver(K)\to\Tt_n$ we construct a space $X=(Q\times T^n)/\simc$
which is a model of quasitoric manifold~\cite{DJ}. Poset $S$ is a
sphere thus $h''_i(S)=h'_i(S)=h_i(S)$. Since $\delta_n\colon
H_n(Q,\dd Q)\to H_{n-1}(\dd Q)$ is an isomorphism, Theorem
\ref{thmEX2structure} implies $\Ex^2_{p,q}=0$ for $p\neq q$. By
Theorems \ref{thmBorderStruct} and \ref{thmHtwoPrimes},
$\dim\Ex^2_{q,q} = h_q(S) = h_{n-q}(S)$. Spectral sequence
$\Ex^*_{*,*}$ collapses at its second term, thus $\dim H_{2q}(X) =
h_{q}(S) = h_{n-q}(S)$ for $0\leqslant q\leqslant n$ and $\dim
H_{2q+1}(X)=0$ which is well known. For bigraded Betti numbers
proposition \ref{propHomologyX} implies $H_{i,j}(X)=0$ if $i\neq
j$, and $\dim H_{i,i}(X)=h_i(S)$.

\subsection{Homology polytopes}\label{subsecHomolPoly}
Let $Q$ be a manifold with corners such that all its proper faces
as well as $Q$ itself are acyclic. Such objects were called
homology polytopes in \cite{MasPan}. In this case everything
stated in the previous paragraph remains valid, thus $\dim
H_{2q}(X) = h_{q}(S) = h_{n-q}(S)$ for $0\leqslant q\leqslant n$,
and $\dim H_{2q+1}(X)=0$ (see \cite{MasPan}).

\subsection{Origami toric manifolds}\label{subsecOrigami}
Origami toric manifolds appeared in differential geometry as
generalizations of symplectic toric manifolds (see
\cite{SGP},\cite{MP}). The original definition contains a lot of
subtle geometrical details and in most part is irrelevant to this
paper. Here we prefer to work with the ad hoc model, which
captures most essential topological properties of origami
manifolds.

\begin{defin}
Topological toric origami manifold $X^{2n}$ is a manifold with
locally standard action $T^n\curvearrowright X$ such that all
faces of the orbit space including $X/T$ itself are either
contractible or homotopy equivalent to wedges of circles.
\end{defin}

As before consider the canonical model. Let $Q^n$ be a nice
manifold with corners in which every face is contractible or
homotopy equivalent to a wedge of $\ddb$ circles. Every principal
$T^n$-bundle $Y$ over $Q$ is trivial (because $H^2(Q)=0$), thus
$Y=Q\times T^n$. Consider the manifold $X=Y/\simc$ associated to
some characteristic map over $\Zo$. Then $X$ is a topological
origami toric manifold.

To apply the theory developed in this paper we also assume that
all proper faces of $Q$ are acyclic (in origami case this implies
contractible) and $Q$ itself is orientable. Thus, in particular,
$Q$ is a Buchsbaum manifold. First, describe the exact sequence of
the pair $(Q,\dd Q)$. By Poincare--Lefchetz duality:
\[
H_q(Q,\dd Q)\cong H^{n-q}(Q)\cong\begin{cases} \ko,\mbox{ if } q=n;\\
H^1(\bigvee_{\ddb}S^1)\cong\ko^{\ddb},\mbox{ if }q=n-1;\\
0,\mbox{ otherwise.}
\end{cases}
\]
In the following let $m$ denote the number of vertices of $S$ (the
number of facets of $Q$). Thus $h_1'(S)=h_1(S)=m-n$. Consider
separately three cases:

\textbf{(1) $n=2$.} In this case $Q$ is an orientable
$2$-dimensional surface of genus $0$ with $\ddb+1$ boundary
components. Thus $\dd Q$ is a disjoint union of $\ddb+1$ circles
and long exact sequence in homology has the form:
\begin{multline*}
\overset{\substack{0\\\parallel}}{H_2(Q)}\longrightarrow
\overset{\substack{\ko\\\parallel}}{H_2(Q,\dd Q)}
\stackrel{\delta_2}{\longrightarrow}
\overset{\substack{\ko^{\ddb+1}\\\parallel}}{H_1(\dd
Q)}\longrightarrow
\overset{\substack{\ko^{\ddb}\\\parallel}}{H_1(Q)}\stackrel{0}{\longrightarrow}\\
\stackrel{0}{\longrightarrow}
\underset{\substack{\parallel\\\ko^{\ddb}}}{H_1(Q,\dd
Q)}\stackrel{\delta_1}{\longrightarrow}
\underset{\substack{\parallel\\\ko^{\ddb+1}}}{H_0(\dd
Q)}\longrightarrow
\underset{\substack{\parallel\\\ko}}{H_0(Q)}\longrightarrow
\underset{\substack{\parallel\\0}}{H_0(Q,\dd Q)}
\end{multline*}

The second term $\Ex^2_{*,*}$ of spectral sequence for $X$ is
given by Theorem \ref{thmEX2structure}. It is shown on a figure
below (only ranks are written to save space).

\def\sseqgridstyle{\ssgridcrossword}
\sseqxstart=0 \sseqystart=-1 \sseqentrysize=1.2cm \sseqxstep=1
\sseqystep=1

\begin{center}
\begin{sseq}{3}{4}
\ssdropcircled{\ddb+1}\ssname{p00} \ssmoveto 1 1 \ssdrop{m-2}
\ssmoveto 1 0 \ssdrop{\ddb} \ssmoveto 2 2 \ssdrop{1} \ssmoveto 2 1
\ssdrop{\ddb} \ssmoveto 2 0 \ssdrop{2\ddb} \ssmoveto 2 {-1}
\ssdrop{\ddb} \ssname{p2m1}

\ssgoto{p2m1} \ssgoto{p00} \ssstroke \ssarrowhead
\end{sseq}
\end{center}

The only nontrivial higher differential is $d^2\colon
\Ex^2_{2,-1}\to\Ex^2_{0,0}$; it coincides with the composition of
$\delta_1\otimes \id_{H_0(T^2)}$ and injective map $f_*^2\colon
H_0(P)\otimes H_0(T^2)\to \Ex^2_{0,0}$. Thus $d^2$ is injective,
and $\dim \Ex^{\infty}_{2,2}= \dim \Ex^{\infty}_{0,0} = 1$; $\dim
\Ex^{\infty}_{2,1}= \dim \Ex^{\infty}_{1,0}=\ddb$; $\dim
\Ex^{\infty}_{1,1}= m-2$; $\dim \Ex^{\infty}_{2,0}= 2\ddb$.
Finally,
\[
\dim H_i(X)=\begin{cases} 1,\mbox{ if }i=0,4;\\
\ddb,\mbox{ if }i=1,3;\\
m-2+2\ddb,\mbox{ if }i=2.
\end{cases}
\]
This coincides with the result of computations in \cite{PodSar},
concerning the same object. This result can be obtained simply by
proposition \ref{propHomologyX}: $\dim H_{0,0}(X)=\dim
H_{2,2}(X)=1$, $\dim H_{1,0}(X)=\dim H_{1,2}(X)=\ddb$, $\dim
H_{2,2}(X)=m-2+2\ddb$.

\textbf{(2) $n=3$.} In this case the exact sequence of $(Q,\dd Q)$
splits in three essential parts:
\begin{gather*}
\underset{\substack{\parallel\\0}}{H_3(Q)}\longrightarrow
\underset{\substack{\parallel\\\ko}}{H_3(Q,\dd Q)}
\stackrel{\delta_3}{\longrightarrow} H_2(\dd Q)\longrightarrow
\underset{\substack{\parallel\\0}}{H_2(Q)}\\
\underset{\substack{\parallel\\0}}{H_2(Q)}\longrightarrow
\underset{\substack{\parallel\\\ko^{\ddb}}}{H_2(Q,\dd Q)}
\stackrel{\delta_2}{\longrightarrow} H_1(\dd Q)\longrightarrow
\underset{\substack{\parallel\\\ko^{\ddb}}}{H_1(Q)}\longrightarrow
\underset{\substack{\parallel\\0}}{H_1(Q,\dd Q)}\\
\underset{\substack{\parallel\\0}}{H_1(Q,\dd
Q)}\stackrel{\delta_1}{\longrightarrow} H_0(\dd Q)\longrightarrow
\underset{\substack{\parallel\\\ko}}{H_0(Q)}\longrightarrow
\underset{\substack{\parallel\\0}}{H_0(Q,\dd Q)}
\end{gather*}

By Theorems \ref{thmEX2structure}, \ref{thmBorderManif},
$\Ex^2_{p,q}$ has the form

\begin{center}
\begin{sseq}{4}{5}
\ssdrop{h'_3} \ssmoveto 1 0 \ssdrop{2\ddb} \ssname{p10} \ssmoveto
1 1 \ssdropcircled{h_2'} \ssname{p11} \ssmoveto 2 2 \ssdrop{h_1'}
\ssmoveto 3 3 \ssdrop{1} \ssmoveto 3 2 \ssdrop{\ddb} \ssmoveto 3 1
\ssdrop{3\ddb} \ssmoveto 3 0 \ssdrop{3\ddb} \ssname{p30} \ssmoveto
3 {-1} \ssdrop{\ddb} \ssname{p3m1}

\ssmoveto 2 0 \ssdrop{0} \ssmoveto 2 1 \ssdrop{0}

\ssgoto{p30} \ssgoto{p11} \ssstroke \ssarrowhead

\ssgoto{p3m1} \ssgoto{p10} \ssstroke \ssarrowhead
\end{sseq}
\end{center}

There are two nontrivial higher differentials: $d^2\colon
\Ex^2_{3,0}\to \Ex^2_{1,1}$ and $d^2\colon
\Ex^2_{3,-1}\to\Ex^2_{1,0}$; both are injective. Thus $\dim
\Ex^{\infty}_{3,3}= \dim \Ex^{\infty}_{0,0} = 1$; $\dim
\Ex^{\infty}_{3,1}= \dim \Ex^{\infty}_{1,0}=\ddb$; $\dim
\Ex^{\infty}_{2,2}=h'_1$; $\dim \Ex^{\infty}_{3,1}=3\ddb$; $\dim
\Ex^{\infty}_{1,1}=h'_2-3\ddb$. Therefore,
\[
\dim H_i(X)=\begin{cases} 1,\mbox{ if }i=0,6;\\
\ddb,\mbox{ if }i=1,5;\\
h'_1+3\ddb,\mbox{ if }i=4;\\
h'_2-3\ddb,\mbox{ if }i=2;\\
0,\mbox{ if }i=3.
\end{cases}
\]

\textbf{(3) $n\geqslant 4$.} In this case lacunas in the exact
sequence for $(Q,\dd Q)$ imply that $\delta_i\colon H_i(Q,\dd
Q)\to H_{i-1}(\dd Q)$ is an isomorphism for $i=n-1,n$, and is
trivil otherwise. We have
\[
H_i(\dd Q)\cong\begin{cases} H_n(Q,\dd Q)\cong\ko,\mbox{ if } i=n-1;\\
H_{n-1}(Q,\dd Q)\cong\ko^{\ddb},\mbox{ if } i=n-2;\\ H_1(Q)\cong
\ko^{\ddb}
\mbox{ if } i=1;\\
H_0(Q)\cong\ko,\mbox{ if } i=0;\\
0,\mbox{ o.w.}
\end{cases}
\]

By Theorems \ref{thmEX2structure} and \ref{thmBorderManif},
$\Ex^2_{p,q}$ has the form

\sseqxstep=10 \sseqystep=10

\begin{center}
\begin{sseq}{7}{8}
\ssdrop{h'_n} \ssmoveto 1 0 \ssdrop{{n\choose 0}\ddb} \ssmoveto 1
1 \ssdrop{h_{n-1}'} \ssmoveto 2 2 \ssdrop{\iddots} \ssmoveto 3 3
\ssdrop{h_3'} \ssmoveto 4 4 \ssdropcircled{h_2'}\ssname{p44}
\ssmoveto 4 3 \ssdrop{{n\choose n-3}\ddb}\ssname{p43} \ssmoveto 4
2 \ssdrop{\vdots} \ssmoveto 4 1 \ssdrop{{n\choose
1}\ddb}\ssname{p41} \ssmoveto 4 0 \ssdrop{{n\choose 0}\ddb}
\ssname{p40} \ssmoveto 5 5 \ssdrop{h_1'} \ssmoveto 6 6 \ssdrop{1}
\ssmoveto 6 5 \ssdrop{{n\choose n} \ddb} \ssmoveto 6 4
\ssdrop{{n\choose n-1} \ddb} \ssmoveto 6 3 \ssdrop{{n\choose n-2}
\ddb}\ssname{p63} \ssmoveto 6 2 \ssdrop{{n\choose n-3}
\ddb}\ssname{p62} \ssmoveto 6 1 \ssdrop{\vdots} \ssmoveto 6 0
\ssdrop{{n\choose 1} \ddb}\ssname{p60} \ssmoveto 6 {-1}
\ssdrop{{n\choose 0} \ddb} \ssname{p6m1}

\ssmoveto 5 4 \ssdrop{0} \ssmoveto 5 3 \ssdrop{0} \ssmoveto 5 1
\ssdrop{0} \ssmoveto 5 0 \ssdrop{0} \ssmoveto 3 1 \ssdrop{0}
\ssmoveto 3 0 \ssdrop{0} \ssmoveto 2 0 \ssdrop{0} \ssmoveto 2 1
\ssdrop{0}

\ssgoto{p63} \ssgoto{p44} \ssstroke \ssarrowhead \ssgoto{p62}
\ssgoto{p43} \ssstroke \ssarrowhead \ssgoto{p60} \ssgoto{p41}
\ssstroke \ssarrowhead \ssgoto{p6m1} \ssgoto{p40} \ssstroke
\ssarrowhead
\end{sseq}
\end{center}

Thus we get: $\dim\Ex^{\infty}_{q,q}= h'_{n-q}$, if $q\neq n-2$;
$\dim\Ex^{\infty}_{n-2,n-2}= h'_2-{n\choose 2}\ddb$ if $q=n-2$;
$\dim\Ex^{\infty}_{n,n-1}=\dim\Ex^{\infty}_{1,0}=\ddb$;
$\dim\Ex^{\infty}_{n,n-2}=n\ddb$. Finally, by proposition
\ref{propHomologyX}, $\dim H_{1,0}(X)=\dim H_{n-1,n}(X)=\ddb$,
$\dim H_{n-1,n-1}(X)=h'_1+n\ddb$, $\dim
H_{n-2,n-2}(X)=h'_2-{n\choose 2}\ddb$, and $\dim
H_{i,i}(X)=h'_{n-i}$ for $i\neq n-1,n-2$.

The differential hitting the marked position produces additional
relations (of the second type) on the cycles $[X_I]\in
H^{2n-4}(X)$. These relations are described explicitly by
proposition \ref{propTwoTypesOfRels}. Dually, this consideration
shows that the map $\ko[S]/\Theta\to H^*(X)$ has a nontrivial
kernel only in degree $4$. The generators of this kernel are
described by corollary \ref{corKernel}.

\section{Concluding remarks}\label{secConclusion}

Several questions concerning the subject of this paper are yet to
be answered.

\begin{enumerate}

\item Of course, the main question which remains open is
the structure of multiplication in the cohomology ring $H^*(X)$.
The border module $\bigoplus_q\Ex^{\infty}_{q,q}\subset H_*(X)$
represents an essential part of homology; the structure of
multiplication on the corresponding subspace in cohomology can be
extracted from the ring homomorphism $\ko[S]/\Theta\to H^*(X)$.
Still there are cocycles which do not come from $\ko[S]$ and their
products should be described separately. Proposition
\ref{propHomologyX} suggests, that some products can be described
via the multiplication in $H^*(Q\times T^n)\cong H^*(Q)\otimes
H^*(T^n)$. This requires further investigation.

\item It is not clear yet, if there is a torsion in the border module
$\bigoplus_q\Ex^{\infty}_{q,q}$ in case $\ko=\Zo$. Theorems
\ref{thmBorderStruct},\ref{thmBorderManif},\ref{thmHtwoPrimes}
describe only the rank of the free part of this group, but the
structure (and existence) of torsion remains open. Note that the
homology of $X$ itself can have a torsion. Indeed, the groups
$H_*(Q)$, $H_*(Q,\dd Q)$ can contain arbitrary torsion, and these
groups appear in the description of $H_*(X)$ by proposition
\ref{propHomologyX}.

\item Corollary \ref{corKernel} describes the kernel of the map
$\ko[S]/\Theta\to H^*(X)$. It seems that the elements of this
kernel lie in a socle of $\ko[S]/\Theta$, i.e. in a submodule
$\{x\in \ko[S]/\Theta\mid (\ko[S]/\Theta)^{+}x=0\}$. The existence
of such elements is guaranteed in general by the Novik--Swartz
theorem \cite{NS}. If the relations $L'_{\beta,A}$ do not lie in a
socle, their existence would give refined inequalities on
$h$-numbers of Buchsbaum posets.

\item Theorem \ref{thmDuality} establish certain connection between the
sheaf of ideals generated by linear elements and the cosheaf of
ideals generated by exterior products. This connection should be
clarified and investigated further. In particular, statement
\ref{stmSeqOfSheavesDual} can probably lead to the description of
homology for the analogues of moment-angle complexes, i.e. the
spaces of the form $X=Y/\simc$, where $Y$ is an arbitrary
principal $T^\ld$-bundle over $Q$.

\item There is a hope, that the argument of section
\ref{SecKeyLemma} involving two spectral sequences for a sheaf
resolution can be generalized to non-Buchsbaum case.

\item The real case, when $T^\ld$ is replaced by $\Zt^\ld$, can,
probably, fit in the same framework.

\end{enumerate}

\section*{Acknowledgements}
I am grateful to prof. Mikiya Masuda for his hospitality and for
the wonderful environment with which he provided me in Osaka City
University. The problem of computing the cohomology ring of toric
origami manifolds, which he posed in 2013, was a great motivation
for this work (and served as a good setting to test working
hypotheses). Also I thank Shintaro Kuroki from whom I knew about
$h'$- and $h''$-vectors and their possible connection to torus
manifolds.

\end{document}